\documentclass[11pt]{amsart}

\usepackage{epsfig}
\usepackage{amsmath}
\usepackage{amssymb,mathrsfs}
\usepackage{footmisc} 


\usepackage[all]{xy}

\usepackage{tikz-cd}


\usepackage{color}
\usepackage{tabularx}
\newcolumntype{C}{>{\centering\arraybackslash}X}

\setcounter{tocdepth}{1}

\def\comment#1{{\sf{[#1]}}}


\def\Z{{\mathbb Z}}
\def\Q{{\mathbb Q}}

\def\R{{\mathbb R}}
\def\C{{\mathbb C}}
\def\P{{\mathbb P}}

\def\bH{{\mathbb H}}
\def\LL{{\mathbb L}}
\def\V{{\mathbb V}}

\def\kk{{\Bbbk}}		


\def\cG{{\mathcal G}}

\def\cJ{{\mathcal J}}


\def\sH{{\mathscr H}}
\def\sM{{\mathscr M}}
\def\sU{{\mathscr U}}
\def\sX{{\mathscr X}}


\def\g{{\mathfrak g}}

\def\k{{\mathfrak k}}

\def\p{{\mathfrak p}}

\def\u{{\mathfrak u}}


\def\w{{\omega}}

\def\G{{\Gamma}}


\def\ee{\mathbf{e}}
\def\ff{\mathbf{f}}
\def\hh{\mathbf{h}}
\def\ww{\mathbf{w}}
\def\zz{\mathbf{z}}
\def\pp{\mathbf{p}}

\def\bmu{\boldsymbol{\mu}}


\def\deltatilde{\tilde{\delta}}
\def\psihat{\hat{\psi}}
\def\rhohat{\hat{\rho}}
\def\lambdahat{\tilde{\lambda}}

\def\H{\widetilde{H}}
\def\Htilde{\widetilde{H}}
\def\Fhat{\widehat{F}}
\def\Mhat{{\widehat{M}}}
\def\Khat{\widehat{K}}

\def\sXhat{\widehat{\sX}}

\def\varphitilde{{\tilde{\varphi}}}


\def\MHS{{\mathsf{MHS}}}

\def\To{\longrightarrow}
\def\bdot{{\bullet}}

\def\blank{{{\phantom{x}}}}		

\def\ffs{{/\negthickspace /}}

\def\Sp{{\mathrm{Sp}}}

\def\GL{{\mathrm{GL}}}
\def\SO{{\mathrm{SO}}}
\def\SU{{\mathrm{SU}}}
\def\Gm{{\mathbb{G}_m}}

\def\sL{{\mathfrak{sl}}}

\def\ho{{\mathrm{ho}}}
\def\rat{{(0)}}
\def\un{{\mathrm{un}}}

\def\bil{{\langle\phantom{x},\phantom{x}\rangle}}


\newcommand\im{\operatorname{im}} 
\newcommand\id{\operatorname{id}}

\newcommand\Hom{\operatorname{Hom}}

\newcommand\End{\operatorname{End}}
\newcommand\Aut{\operatorname{Aut}}
\newcommand\Diff{\operatorname{Diff}}

\newcommand\Gr{\operatorname{Gr}}

\newcommand\Sym{\operatorname{Sym}}
\newcommand\rank{\operatorname{rank}}



\numberwithin{equation}{section}

\newtheorem{theorem}{Theorem}[section]
\newtheorem{lemma}[theorem]{Lemma}
\newtheorem{proposition}[theorem]{Proposition}
\newtheorem{corollary}[theorem]{Corollary}
\newtheorem{bigtheorem}{Theorem}
\newtheorem{bigcorollary}[bigtheorem]{Corollary}

\theoremstyle{definition}
\newtheorem{definition}[theorem]{Definition}
\newtheorem{example}[theorem]{Example}

\theoremstyle{remark}
\newtheorem{remark}[theorem]{Remark}

\newtheorem{question}[theorem]{Question}
\newtheorem{problem}[theorem]{Problem}

\newtheorem{conjecture}[theorem]{Conjecture}


\begin{document}

\title{Mapping Class Groups of Simply Connected K\"ahler Manifolds}

\author{Richard Hain}
\address{Department of Mathematics\\ Duke University\\
Durham, NC 27708-0320}
\email{hain@math.duke.edu}

\thanks{ORCID: {\sf 0000-0002-7009-6971}}

\date{\today}

\subjclass{Primary 14D05, 14J15; Secondary 14D23, 55P62}

\keywords{mapping class group, Torelli group, K\"ahler manifold, moduli space of hypersurfaces}


\maketitle


\section{Introduction}

The {\em mapping class group} $\G_M$ of a closed orientable manifold $M$ is the group of isotopy classes of orientation preserving diffeomorphisms of $M$:
$$
\G_M := \pi_0 \Diff^+ M.
$$
A mapping class is the isotopy class of a diffeomorphism. The {\em Torelli group} $T_M$ of $M$ is the subgroup consisting of the mapping classes that act trivially on the homology of $M$:
$$
T_M := \ker\{\G_M \to \Aut H_\bdot(M;\Z)\}.
$$
Denote the image of $\G_M \to \Aut H_\bdot(M;\Z)$ by $S_M$. The mapping class group $\G_M$ is an extension
$$
1 \to T_M \to \G_M \to S_M \to 1.
$$

Mapping class groups of compact oriented surfaces play a fundamental role in the study of projective algebraic curves and their moduli and one can ask whether this remains true in higher dimensions. It appears that, at present, not much is known and that the subject is still in its infancy. The goal of this paper is to survey and supplement known results on mapping class and Torelli groups of simply connected compact K\"ahler manifolds and their relation to fundamental groups of moduli spaces of projective manifolds. To help stimulate the study of such questions about mapping class groups, we have assembled a list of questions and open problems. It can be found in Section~\ref{sec:future}.

General results of Dennis Sullivan \cite{sullivan}, stated in Section~\ref{sec:sullivan}, imply that if $M$ is simply connected with $\dim_\R M \ge 5$, then $S_M$ is an arithmetic subgroup of an algebraic subgroup of the group of automorphisms of the cohomology ring $H^\bdot(M;\Q)$ that fix the rational Pontryagin classes and that $T_M$ mod a finite group is a lattice in a unipotent $\Q$-group. We combine Sullivan's result with some Hodge theory to show (Theorem~\ref{thm:kahler_coho}) that if $(M,\w)$ is simply connected compact K\"ahler manifold of complex dimension $>2$, then the group $G$ of automorphisms of $H^\bdot(M;\R)$ that fix $\w$ and the Pontryagin classes is reductive and that $S_M$ is a lattice in $G(\R)$.

The problem then is to understand $T_M$ and, when $M$ underlies a complex projective algebraic manifold, the relationship between $\G_M$ and the orbifold fundamental group of the moduli space that parameterized the projective algebraic structures on $M$.
Denote the group of homotopy classes of ``orientation preserving'' homotopy equivalences of $M$ with itself by $\ho E_M^+$. The first wrinkle is that, when $\dim_\R M > 2$, the natural homomorphism
$$
\G_M \to \ho E_M^+
$$
may not be injective, unlike in the classical case (i.e., $\dim_\R M = 2$) where it is an isomorphism. As pointed out by Sullivan \cite{sullivan} (and explained in Section~\ref{sec:distortion}) the kernel mod torsion is a full lattice in the ``distortion group'', which is a quotient
$$
D_M = \big[\bigoplus_{4j \le \dim M} H^{4j-1}(M:\Q)\big]/I
$$
of the rational cohomology of $M$ in degrees congruent to 3 mod 4. It detects elements of the kernel by measuring how they ``distort'' the Pontryagin classes of $M$.

Denote the elements of $\ho E_M^+$ that act trivially on $H^\bdot(M)$ by $\ho T_M$. A first step towards understanding the Torelli group $T_M$ is to look for generalizations of the Johnson homomorphism \cite{johnson:homom} to higher dimensions. Because of the distortion group, there are two steps. The first is to construct $S_M$ invariant homomorphisms that detect $\ho T_M$; the second is to compute $D_M$ and its image in $H_1(T_M;\Q)$.

After reviewing some classical homotopy theory in Section~\ref{sec:adams}, we give a general construction (for simply connected $M$) of a Johnson type homomorphism from $H_1(\ho T_M)$ into a subquotient of the homology of the space of based loops in $M$. This is done in Section~\ref{sec:johnson}. In the case of simply connected K\"ahler 3-folds, this construction and Sullivan's results give the following result, which should be regarded as a prototype of similar results that should hold in higher dimensions.

\begin{bigtheorem}
\label{thm:main}
If $M$ is a simply connected compact K\"ahler 3-fold, there are $S_M$-invariant surjections
\begin{equation}
\label{eqn:tau}
H_1(T_M;\Q) \twoheadrightarrow H_1(\ho T_M;\Q) \overset{\tau_M}{\twoheadrightarrow} \Hom(H_3(M;\Q),\Sym^2 H_2(M;\Q)/\im\Delta),
\end{equation}
where $\Sym^2$ denotes the symmetric square and $\Delta : H_4(M;\Q) \to S^2 H_2(M;\Q)$ is the dual of the cup product.
\end{bigtheorem}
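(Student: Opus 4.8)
The plan is to produce the homomorphism $\tau_M$ from the general Johnson-type construction of Section~\ref{sec:johnson} and then identify its target explicitly using rational homotopy theory of the K\"ahler $3$-fold $M$. First I would recall that, since $M$ is simply connected, its rational homotopy type is a formal consequence of the cohomology ring $H^\bdot(M;\Q)$ (Deligne--Griffiths--Morgan--Sullivan formality of compact K\"ahler manifolds). Thus the rational homology of the based loop space $\Omega M$ is computed by the cobar construction on $H^\bdot(M;\Q)$, equivalently $H_\bdot(\Omega M;\Q)$ is the universal enveloping algebra of the homotopy Lie algebra $\pi_\bdot(\Omega M)\otimes\Q$, which in turn is the free graded Lie algebra on $\widetilde{H}_\bdot(M;\Q)[-1]$ modulo the quadratic relations dual to the cup product $H^2\otimes H^2 \to H^4$ (and the other cup products, which for a $3$-fold contribute only in the relevant low degree the map $\Delta$). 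The general construction gives a homomorphism from $H_1(\ho T_M)$ into a subquotient of $H_\bdot(\Omega M;\Q)$ carrying an $S_M$-action; the task is to extract from this the degree in which the target becomes $\Hom(H_3(M;\Q),\Sym^2 H_2(M;\Q)/\im\Delta)$.

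Concretely, I would proceed as follows. (i) Use formality to write down a minimal Sullivan model, or dually a minimal $L_\infty$ (here genuinely quadratic, hence dg Lie) model, of $M$ with generators in degrees $2$ and $3$ (plus higher ones irrelevant to the first nonzero Johnson layer), the quadratic part of the differential being dual to cup product. (ii) Identify the relevant piece of $H_\bdot(\Omega M;\Q)$: the bracket of two classes coming from $H_2(M)$ lives in the part of the homotopy Lie algebra dual to $H_3$, and the cokernel of the cup-product relation $H^4 \to \Sym^2 H^2$ is exactly $\Sym^2 H_2(M;\Q)/\im\Delta$ after dualizing, where $\Delta$ is the dual of $\cup\colon \Sym^2 H^2 \to H^4$. (iii) Feed this into the Johnson homomorphism of Section~\ref{sec:johnson}: an element of $\ho T_M$ acts trivially on $H^\bdot(M)$, so the induced automorphism of the minimal model is unipotent, and its first deviation from the identity is a derivation that lowers filtration by one; evaluated on the degree-$3$ generators and projected to brackets of degree-$2$ classes, this derivation is precisely an element of $\Hom(H_3(M;\Q),\Sym^2 H_2(M;\Q)/\im\Delta)$. (iv) Surjectivity: realize every such derivation by an actual homotopy self-equivalence of $M$, e.g. by exhibiting fiberwise or Postnikov-stage constructions, or by invoking Sullivan's realization theorem that the group of homotopy classes of self-equivalences surjects onto the automorphisms of the minimal model; combined with the surjection $H_1(T_M;\Q)\twoheadrightarrow H_1(\ho T_M;\Q)$ coming from Sullivan's structure theory (Section~\ref{sec:sullivan}, Section~\ref{sec:distortion}), this gives the full chain \eqref{eqn:tau}. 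The $S_M$-equivariance is automatic because the whole construction is natural in $M$ and $S_M$ acts through automorphisms of $H^\bdot(M;\Q)$.

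The main obstacle, I expect, is step~(iv), the surjectivity and the precise identification of the target as a \emph{quotient} rather than a subspace. Showing that the Johnson homomorphism lands in (and hits all of) $\Sym^2 H_2/\im\Delta$ requires a careful bookkeeping of which quadratic relations in the homotopy Lie algebra are seen by a first-order unipotent automorphism: one must check that the relations forced by the cup product into $H^4$ are exactly the indeterminacy, and not more (no hidden Massie-product or higher-$A_\infty$ corrections survive — here formality is what saves us) and not less (every element of the quotient is realized). Realizing derivations by honest homotopy equivalences, and then lifting to the mapping class group modulo the distortion group, is where one must be most careful; the $3$-fold hypothesis is used precisely to guarantee that no further cup-product relations (beyond $\Delta$) enter in the relevant degree and that Poincar\'e duality pairs $H_3$ with itself, keeping the computation self-contained. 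A secondary technical point is verifying that the natural map $H_1(T_M;\Q)\to H_1(\ho T_M;\Q)$ is surjective, which follows from Sullivan's description of the kernel of $\G_M \to \ho E_M^+$ as (mod torsion) a lattice in the distortion group $D_M$, so that rationally nothing in $H_1(\ho T_M)$ is lost.
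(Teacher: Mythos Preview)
Your proposal is correct and follows essentially the same route as the paper. The paper makes your step~(iv) explicit by writing down the free Lie model $\LL(\ee_j,\zz_k,\ff_m,\ww)$ of the formal $6$-manifold and checking directly that any degree-$0$ derivation $D$ with $D\zz_k = \sum a_k^{s,t}[\ee_s,\ee_t]$ (arbitrary) extends to one commuting with $\partial$; the only nontrivial constraint is $D\partial\ww = 0$, which---via the Jacobi identity and the Poincar\'e-duality form of $\partial\ww = \sum_j[\ee_j,\ff_j] - \sum_{k>0}[\zz_k,\zz_{-k}]$---determines $D\ff_m$ in terms of the $a_k^{s,t}$ rather than obstructing the choice.
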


The homomorphism $\tau_M$ is constructed from the action of $\ho T_M$ on the well known exact sequence
$$
0 \to \Sym^2 H_2(M;\Q)/\im\Delta \to \pi_3(M,x_o)\otimes\Q \to H_3(M;\Q) \to 0.
$$
This sequence is a higher dimensional analogue of the sequence
$$
1 \to L^2\pi_1(M,x_o)/L^3 \to \pi_1(M,x_o)/L^3 \to H_1(M) \to 0
$$
which was used by Dennis Johnson \cite{johnson:homom} to construct the classical Johnson homomorphism
$$
H_1(T_M) \to \Lambda^3 H_1(M) \subset  \Hom(H_1(M),L^2\pi_1(M,x_o)/L^3),
$$
when $M$ is a compact oriented surface of genus $\ge 3$ and where $L^k\pi_1(M,x_o)$ denotes the $k$th term of the lower central series of its fundamental group. Johnson \cite{johnson:h1} proved that the kernel is a finite 2-group.

The previous result gives the following lower bound on the rank of $H_1(T_M)$ in terms of the Betti numbers $b_j$ of $M$.

\begin{bigcorollary}
\label{cor:bound}
If $M$ is a simply connected K\"ahler 3-fold, then
$$
\rank H_1(\ho T_M) \ge b_3\binom{b_2+1}{2} - b_2b_3 = \frac{(b_2-1)b_2b_3}{2}.
$$
In particular, if $b_2 > 1$ and $b_3>0$, then $H_1(T_M;\Q) \neq 0$.
\end{bigcorollary}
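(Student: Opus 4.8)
The plan is to read the bound straight off Theorem~\ref{thm:main} as a dimension count. Since the composite in \eqref{eqn:tau} is a surjection of $\Q$-vector spaces, I would start from
$$
\rank H_1(\ho T_M) = \dim_\Q H_1(\ho T_M;\Q) \ge \dim_\Q \Hom\big(H_3(M;\Q),\, \Sym^2 H_2(M;\Q)/\im\Delta\big) = b_3\cdot\dim_\Q\big(\Sym^2 H_2(M;\Q)/\im\Delta\big).
$$
Because $\dim_\Q \Sym^2 H_2(M;\Q) = \binom{b_2+1}{2}$, the corollary reduces to the claim that $\Delta$ is injective, equivalently that $\dim_\Q \im\Delta = \dim_\Q H_4(M;\Q) = b_4 = b_2$, the last equality holding by Poincar\'e duality on the closed oriented $6$-manifold $M$.

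To verify that $\Delta$ is injective, note that it is by definition the $\Q$-linear dual of the cup product $\mu\colon \Sym^2 H^2(M;\Q)\to H^4(M;\Q)$, so injectivity of $\Delta$ is equivalent to surjectivity of $\mu$. Here I would invoke the Hard Lefschetz theorem: since $(M,\w)$ is compact K\"ahler of complex dimension $3$, the map $x \mapsto \w\cup x$ from $H^2(M;\Q)$ to $H^4(M;\Q)$ is an isomorphism. As this map factors through $\mu$ (it is $\mu$ restricted to $\w\cup H^2(M;\Q) \subseteq \Sym^2 H^2(M;\Q)$), the cup product $\mu$ is onto, hence $\Delta$ is injective and $\dim_\Q\big(\Sym^2 H_2(M;\Q)/\im\Delta\big) = \binom{b_2+1}{2} - b_2 = \binom{b_2}{2}$.

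Assembling the pieces gives $\rank H_1(\ho T_M) \ge b_3\big(\binom{b_2+1}{2} - b_2\big) = b_3\binom{b_2}{2} = \tfrac{(b_2-1)b_2 b_3}{2}$, which is the asserted inequality. Finally, when $b_2 > 1$ and $b_3 > 0$ the right-hand side is strictly positive, so $H_1(\ho T_M;\Q)\neq 0$, and then $H_1(T_M;\Q)\neq 0$ by the left-hand surjection in \eqref{eqn:tau}. The only place genuine content enters is the surjectivity of $\mu$ — the single step that uses the K\"ahler hypothesis rather than just Theorem~\ref{thm:main} — but this is immediate from Hard Lefschetz and presents no real obstacle; the substance of the statement is carried entirely by Theorem~\ref{thm:main}, and what remains is bookkeeping.
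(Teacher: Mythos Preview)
Your proof is correct and matches the paper's intent: the corollary is stated there as an immediate consequence of Theorem~\ref{thm:main}, with no separate proof given. You have correctly identified and filled in the one step the paper leaves implicit, namely that $\Delta$ is injective (equivalently the cup product $\Sym^2 H^2(M;\Q)\to H^4(M;\Q)$ is surjective), and your appeal to Hard Lefschetz for this is exactly right.
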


This bound is vacuous when $b_2=1$, such as when $M$ is a smooth hypersurface in $\P^4$. In this case, Kreck and Su \cite{kreck-su:2} compute the structure of $T_M$ and its abelianization. A consequence of their main result is:

\begin{bigtheorem}[Kreck--Su]
\label{thm:kreck-su}
If $M$ is a simply connected compact K\"ahler 3-fold with $b_2 = 1$, then distortion defines a homomorphism
$$
T_M \to H^3(M;\Q)
$$
whose image is a lattice of full rank and whose kernel is finite.
\end{bigtheorem}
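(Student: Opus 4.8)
The plan is to deduce the theorem from Kreck and Su's analysis of the mapping class groups of simply connected closed $6$-manifolds \cite{kreck-su:2}, in combination with the structural results assembled above. It is worth noting at the outset that the Johnson-type homomorphism of Theorem~\ref{thm:main} contributes nothing here: when $b_2 = 1$, hard Lefschetz makes the cup product $\Sym^2 H^2(M;\Q)\to H^4(M;\Q)$ an isomorphism, so $\Delta$ is surjective and $\Sym^2 H_2(M;\Q)/\im\Delta = 0$; thus $\tau_M = 0$ and the bound of Corollary~\ref{cor:bound} is vacuous, as already remarked. What makes the theorem subtle is that $\ho T_M$ itself remains large --- the degree-$4$ generators of the minimal model of $M$ that kill $\omega\cup H^3(M;\Q)\subseteq H^5(M;\Q) = 0$ already account for a free abelian subgroup of rank $b_3$ inside $\ho T_M$ --- yet the content of the theorem is that the image of $T_M$ in $\ho T_M$ is \emph{finite}, so that none of this rationally visible part of $\ho T_M$ is realized by diffeomorphisms and the whole of $T_M$, up to finite kernel, is accounted for by distortion.

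I would organize the argument around the inclusion $\ker\{\G_M\to\ho E_M^+\}\subseteq T_M$ --- which holds because a diffeomorphism homotopic to the identity acts trivially on homology --- and the resulting exact sequence
\[
1\longrightarrow \ker\{\G_M\to\ho E_M^+\}\longrightarrow T_M\longrightarrow \overline{T}_M\longrightarrow 1,
\]
in which $\overline{T}_M$ denotes the image of $T_M$ in $\ho T_M$. Two inputs are needed. The first is elementary: since $\dim_\R M = 6$, the only nonzero summand of $\bigoplus_{4j\le\dim M}H^{4j-1}(M;\Q)$ is $H^3(M;\Q)$, and since $H^1(M;\Q) = 0$ the indeterminacy $I$ vanishes, so the distortion group of Section~\ref{sec:distortion} is $D_M = H^3(M;\Q)\cong\Q^{b_3}$; hence distortion maps $\ker\{\G_M\to\ho E_M^+\}$, modulo torsion, isomorphically onto a lattice of full rank in $H^3(M;\Q)$. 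The second input is the substantive one and is precisely Kreck--Su's theorem: when $b_2 = 1$ the group $\overline{T}_M$ is finite. This is a surgery-theoretic computation --- carried out in \cite{kreck-su:2} via Kreck's modified surgery and the classification of the $6$-manifolds in question --- and it is the step I expect to be the main obstacle; in a survey of this kind one takes it as a black box.

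Granting these, the theorem follows by routine bookkeeping. Finiteness of $\overline{T}_M$ says $\ker\{\G_M\to\ho E_M^+\}$ has finite index in $T_M$; combined with Sullivan's theorem (Section~\ref{sec:sullivan}), which presents $T_M$ modulo a finite normal subgroup $N$ as a lattice in a simply connected unipotent $\Q$-group $U$, this exhibits $T_M/N$ as a finitely generated torsion-free nilpotent group that is commensurable with the rank-$b_3$ distortion lattice; such a group, being commensurable with a free abelian group of the same Hirsch length, is itself free abelian, so $T_M/N\cong\Z^{b_3}$ and $U$ is abelian of dimension $b_3$. Since $T_M/N$ is torsion-free, $N$ is exactly the torsion subgroup of $T_M$, and $T_M/N$ is a lattice of full rank in $H^3(M;\R)$. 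Finally, the composite $T_M\twoheadrightarrow T_M/N\hookrightarrow H^3(M;\Q)$ restricts, under the identification $D_M = H^3(M;\Q)$, to the distortion homomorphism on the finite-index subgroup $\ker\{\G_M\to\ho E_M^+\}$; it is therefore \emph{the} distortion homomorphism of the statement, its image is the full-rank lattice $T_M/N$, and its kernel is the finite group $N$.
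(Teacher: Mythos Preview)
The paper does not prove this theorem; it is stated in the introduction as a consequence of Kreck and Su's computation in \cite{kreck-su:2}, and the paper simply cites that reference. So there is no ``paper's own proof'' to compare against. Your reduction --- splitting $T_M$ into $\ker\{\G_M\to\ho E_M^+\}$ (handled by Sullivan and the identification $D_M=H^3(M;\Q)$) and its image $\overline{T}_M$ in $\ho T_M$ (taken as a black box from Kreck--Su), followed by the commensurability bookkeeping --- is a correct way to organize the statement, and your final paragraph is sound.

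There is, however, a genuine error in your motivating aside. You assert that $\ho T_M$ ``remains large'', with the degree-$4$ generators $y_k$ of the minimal model (satisfying $dy_k=\omega z_k$) accounting for a free abelian group of rank $b_3$ inside $\ho T_M$ via $y_k\mapsto y_k+\lambda_k\omega^2$. These automorphisms are in fact homotopic to the identity: in the cylinder $\mathcal M\otimes\Lambda(t,dt)$ one can take $H(z_k)=z_k+\lambda_k\,\omega\,dt$ and $H(y_k)=y_k+\lambda_k t\,\omega^2$, and then $dH(y_k)=\omega z_k+\lambda_k\omega^2\,dt=H(\omega z_k)=H(dy_k)$. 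So they contribute nothing to $\ho T_{M_\rat}$. This is consistent with the sentence immediately following Theorem~\ref{thm:kreck-su} in the paper, which notes that Kreck--Su's result implies $H_1(\ho T_M;\Q)=0$; combined with Sullivan's Theorem~\ref{thm:sullivan_ho} this forces $\ho T_M$ itself to be finite. Thus the picture you paint --- a large $\ho T_M$ only a finite part of which is realized by diffeomorphisms --- is backwards: $\ho T_M$ is already finite, and the content of Kreck--Su is their direct surgery-theoretic determination of $T_M$, not a comparison of $T_M$ with a larger homotopy Torelli group.
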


Their result implies that for such manifolds, $H_1(\ho T_M;\Q)$ vanishes so that $H_1(T_M;\Q)$ is isomorphic to the distortion group $D_M$.

In the case of complex projective curves of genus $\ge 2$, the mapping class group $\G_M$ is the orbifold fundamental group of the moduli space $\sM_g$ of smooth projective curves of genus $g$. It is natural to ask whether this relationship persists in higher dimensions. In Section~\ref{sec:hypersurfaces} we explain how a vanishing result for the distortion homomorphism for hypersurfaces combined with a result of Carlson and Toledo \cite{carlson-toledo} imply that for hypersurfaces of degree $\ge 3$ and dimension congruent to 3 mod 4, the fundamental group of the corresponding moduli space is far from being the mapping class group of the underlying manifold.

\begin{bigtheorem}
\label{thm:hypersurfaces}
Denote the moduli stack of smooth hypersurfaces of degree $d$ in $\P^{n+1}$ by $\sH_{n,d}$. If $d\ge 3$ and $n\ge 3$, then the kernel of the monodromy representation
$$
\lambda_M : \pi_1(\sH_{n,d},[M]) \to \G_M
$$
contains a non-abelian free group. When $n\equiv 3\bmod 4$, its image has infinite index.
\end{bigtheorem}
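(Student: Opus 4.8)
The plan is to split the statement into two assertions: (a) the kernel of $\lambda_M$ contains a non-abelian free group whenever $d \ge 3$ and $n \ge 3$; and (b) when $n \equiv 3 \bmod 4$, the image of $\lambda_M$ has infinite index in $\G_M$. For (a), I would invoke the theorem of Carlson and Toledo \cite{carlson-toledo}: the fundamental group $\pi_1(\sH_{n,d})$ of the moduli space of smooth hypersurfaces (equivalently, the fundamental group of the complement of the discriminant in $\P H^0(\P^{n+1},\cO(d))$, modulo $\PGL_{n+2}$) surjects onto a group containing a non-abelian free subgroup that acts trivially on cohomology — more precisely, their result produces large (non-abelian, free) subgroups in the kernel of the map to the symplectic/orthogonal group $\Aut H^n(M)$. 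The point is then that any mapping class in the image $\lambda_M(\pi_1(\sH_{n,d}))$ that acts trivially on $H^\bdot(M;\Z)$ lies in $T_M$, and by Theorem~\ref{thm:kreck-su} (for $b_2 = 1$, which holds for hypersurfaces of dimension $\ge 3$) together with the vanishing of the distortion homomorphism for hypersurfaces, $T_M$ in the relevant range is controlled: the distortion map $T_M \to H^{n}(M;\Q)$ (when $n \equiv 3 \bmod 4$) or the analogous picture forces the monodromy image of such a free group to collapse. So one needs: (i) a free group $F \le \pi_1(\sH_{n,d})$ from Carlson–Toledo lying in the kernel of the action on $H^n$; (ii) the fact that the composite $F \to \G_M \to \Aut H^\bdot(M;\Z)$ is then trivial (using simple connectivity of $M$, so $H^i(M) = H^i(\P^{n+1})$ for $i \ne n$, hence the action on the full cohomology ring is determined by the action on $H^n$); (iii) a vanishing statement that shows $\lambda_M(F)$ cannot inject into $T_M$, which is where the distortion computation for hypersurfaces enters.

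For step (iii), the key input is the computation that the distortion homomorphism $D: \pi_1(\sH_{n,d}) \to D_M$ (the composite of $\lambda_M$ with $T_M \to D_M$, defined on the Torelli part) vanishes for hypersurfaces — this is the "vanishing result for the distortion homomorphism for hypersurfaces" referenced in the text. Granting that, the monodromy image of the Torelli part of $\pi_1(\sH_{n,d})$ in $T_M$ maps to zero in $D_M$; combined with Kreck–Su (Theorem~\ref{thm:kreck-su}), which says that for $b_2 = 1$ the distortion map $T_M \to H^n(M;\Q)$ has finite kernel and full-rank image, we conclude that the image of $F$ under $\lambda_M$ is finite. But $F$ is a non-abelian free group mapping to a finite group, so an infinite-index (indeed finite-index, being a kernel in $F$ of a map to a finite group) free subgroup of $F$ lies in $\ker \lambda_M$, and such a subgroup is again non-abelian free. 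This establishes (a).

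For (b), when $n \equiv 3 \bmod 4$, I would argue by comparing ranks. The image $\lambda_M(\pi_1(\sH_{n,d}))$ meets $T_M$ in a subgroup which, by the argument above, maps to zero in the distortion group $D_M \cong H^n(M;\Q)$ (of positive rank, since $n \equiv 3 \bmod 4$ and $b_n(M) > 0$ for a hypersurface of degree $\ge 2$). On the other hand, Theorem~\ref{thm:kreck-su} asserts $T_M \to H^n(M;\Q)$ has image of full rank, so $T_M$ surjects (up to finite index) onto a lattice in $H^n(M;\Q)$ while $\lambda_M(\pi_1(\sH_{n,d})) \cap T_M$ lands in the kernel; since $[T_M : \lambda_M(\pi_1) \cap T_M]$ is therefore infinite, and $S_M$ is a lattice in a reductive group (Theorem~\ref{thm:kahler_coho}), the image $\lambda_M(\pi_1(\sH_{n,d}))$ has infinite index in $\G_M$. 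One must be slightly careful: one needs the monodromy image in $S_M$ to have finite index (which it does — the global monodromy of hypersurfaces is arithmetic, a classical result of Beauville and others, so $\lambda_M(\pi_1) \cap T_M$ has infinite index in $T_M$ is the relevant comparison), so the infinite index is genuinely contributed by the Torelli direction.

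The main obstacle is step (iii)/(b): proving that the distortion homomorphism vanishes on the monodromy of hypersurfaces. This is not formal — it requires understanding the variation of the stable smooth (not just holomorphic) structure of hypersurfaces in a family, i.e.\ showing that the Pontryagin-class "twisting" measured by distortion is trivial for the universal family. I would expect to prove it by exhibiting the universal hypersurface family as fiberwise smoothly trivial in a way compatible with Pontryagin data — for instance, by a Lefschetz-pencil or Morse-theoretic argument showing the smooth structures glue without distortion, or by checking that the relevant characteristic-class obstruction lives in a cohomology group of $\sH_{n,d}$ that is forced to vanish. Everything else — the input from Carlson–Toledo, from Kreck–Su, and from Theorem~\ref{thm:kahler_coho} — is quotable, and the free-group-to-finite-group collapse is routine; the content is isolating and proving the distortion vanishing.
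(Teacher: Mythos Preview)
Your plan has the right architecture and correctly identifies Carlson--Toledo and the distortion vanishing as the two inputs, but there is a genuine gap in part (a). You reduce the non-injectivity of $\lambda_M|_F$ to the claim that $\lambda_M(F)\subseteq T_M$ is finite, and for this you invoke Kreck--Su (Theorem~\ref{thm:kreck-su}). But that theorem is stated only for simply connected K\"ahler \emph{3-folds}; it says nothing about hypersurfaces of dimension $n>3$. Worse, for $n\not\equiv 3\bmod 4$ the distortion group $D_M$ vanishes outright (a hypersurface has $H^{4j-1}(M;\Q)=0$ whenever $4j-1\neq n$), so there is no distortion homomorphism to appeal to. As written, your argument establishes (a) only for $n=3$.

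The paper's route is both simpler and uniform in $n$: by Sullivan's Theorem~\ref{thm:sullivan_diff}, $T_M$ modulo a finite group is a lattice in a unipotent $\Q$-group, hence virtually nilpotent. A non-abelian free group cannot embed in a virtually nilpotent group, so $F\to T_M$ has nontrivial kernel; that kernel is a nontrivial normal subgroup of a non-abelian free group and is therefore itself non-abelian free. No distortion computation and no Kreck--Su is needed for (a).

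For (b) your outline is essentially the paper's, but again you lean on Kreck--Su for the full-rank image of $\delta:T_M\to H^n(M;\Q)$; for general $n\equiv 3\bmod 4$ this instead comes from Sullivan's arithmeticity statement (the image of $\G_M$ in $G_M(\Q)$ is arithmetic, and $D_M=H^n(M;\Q)$ sits inside $G_M$). Regarding the distortion vanishing itself: the paper does not use Morse theory or smooth trivializations. It computes the total Chern class of the universal hypersurface $\sX\subset \sU_{n,d}\times\P^{n+1}$ directly (Lemma~\ref{lem:total_chern}), obtaining $p_k(\sX)=a_k\w_\sX^{2k}$; pulling back along any loop then gives $p_k(M_\varphi)=a_k\w_\varphi^{2k}$, so $\delta(\varphi)=0$ by the recipe of Section~\ref{sec:ci_like}.
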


The lower bound on the kernel is due to Carlson--Toledo \cite{carlson-toledo} (see Theorem~\ref{thm:carlson_toledo}), while the constraint on the image is Theorem~\ref{thm:vanishing}. The images of $\pi_1(\sH_{n,d},[M])$ and $\G_M$ in $\Aut H^n(X)$ are commensurable by a result of Beauville \cite{beauville}. I do not know whether or not the image of $\lambda_M$ has infinite index when $n$ is not congruent to 3 mod 4.

Many questions and problems arose as I was writing these notes. A selection of them is presented in Section~\ref{sec:future}. Among them is a proposal for using relative mapping class groups to study fundamental groups of moduli spaces of hypersurfaces to circumvent the issues that arise in Theorem~\ref{thm:hypersurfaces}. In addition, Hodge theory should play a role in the study of fundamental groups of moduli spaces of projective varieties and the associated mapping class group as it does in the classical case. For this reason, the list contains several questions about the Hodge theory of fundamental groups of moduli stacks of projective manifolds and its relation to the (conjectural) Hodge theory of the corresponding mapping class groups.

\bigskip

The origin of this paper lies in the counter examples of Kreck and Su \cite{kreck-su} to a ``Theorem''  of Verbitsky \cite[Thm.~3.4]{verbitsky}, which incorrectly asserted that the Torelli group of a simply connected compact K\"ahler manifold is finite. Fortunately this error does not affect the other major results in Verbitsky's paper. (See the Erratum \cite{verbitsky:erratum} and \cite{looijenga} for details.) This paper is a much expanded version of a note in which Theorem~\ref{thm:main} was proved and whose goal was to give a second family of counter examples.

\bigskip
\noindent{\bf Acknowledgments:} I would like to thank Alexander Berglund for pointing out the example in Section~\ref{sec:coho_autos}. I am also grateful to the referee for their careful reading of the manuscript.

\section{Quick Review of the Classical Johnson Homomorphism}

The setting for the classical Johnson homomorphism is where $M$ is a compact oriented surface of genus $\ge 3$. In this case, one has the short exact sequence
\begin{equation}
\label{eqn:extn}
0 \to I_{x_o}^2/I_{x_o}^3 \to I_{x_o}/I_{x_o}^3 \to I_{x_o}/I_{x_o}^2 \to 0
\end{equation}
where $I_{x_o}$ denotes the kernel of the augmentation $\Z\pi_1(M,x_o) \to \Z$. There are natural isomorphisms
$$
I_{x_o}/I_{x_o}^2 \cong H_1(M) \text{ and } I_{x_o}^2/I_{x_o}^3 \cong H_1(M)^{\otimes 2}/\theta
$$
where
$$
\theta = \sum_{j=1}^g (a_j \otimes b_j - b_j \otimes a_j)
$$
and $a_1,\dots,a_g,b_1,\dots,b_g$ is a symplectic basis of $I_{x_o}/I_{x_o}^2$.

Each diffeomorphism $\phi$ of $(X,x_o)$ induces an automorphism of $I_{x_o}/I_{x_o}^3$. If $\phi \in T_M$, it acts trivially on the kernel and cokernel of (\ref{eqn:extn}), so that
$$
\phi_\ast - \id \in \Hom(I_{x_o}/I_{x_o}^2,I_{x_o}^2/I_{x_o}^3) \cong \Hom(H_1(M), H_1(M)^{\otimes 2}/\theta).
$$
The map
$$
\tau_M : T_M \to \Hom(H_1(M), H_1(M)^{\otimes 2}/\theta)
$$
is a homomorphism and is one of the incarnations of the Johnson homomorphism. It is invariant under the image $S_M$ of the mapping class group of $(M,x_o)$ in $\Aut H_1(M)$, which is isomorphic to $\Sp_g(\Z)$. In this case, the homomorphism $\tau_M$ is not surjective. Rather its image is isomorphic to a copy of $\Lambda^3 H_1(M)$ inside the target. See \cite{johnson:survey} or \cite{hain:msri} for complete statements and details. One of Johnson's basic results \cite{johnson:h1} asserts that $\tau_M$ induces an isomorphism of $H_1(T_M)/\text{2-torsion}$ with the image of $\tau_M$.

\section{Some Classical Results from Homotopy Theory}

The analogue of the group algebra $\kk \pi_1(X,x_o)$ for a simply connected space $X$ is the homology of its loop space $L_{x_o}X$. Both are cocommutative Hopf algebras whose structure is related to that of $\pi_1(X,x_o)$ in the first case and to $\pi_\bdot(X,x_o)$ in the second. To explain this, we review some classical homotopy theory.

Suppose that $X$ is a simply connected topological space. Fix a point $x_o \in X$. Denote the space of loops $(S^1,1) \to (X,x_o)$ in $X$ based at $x_o$ by $L_{x_o} X$. There is a standard isomorphism
\begin{equation}
\label{eqn:iso}
\pi_j(X,x_o) \cong \pi_{j-1}(L_{x_o}X,c_o)
\end{equation}
for all $j > 0$, where $c_o$ denotes the constant loop at $x_o$. The collection of higher homotopy groups forms a graded Lie algebra over $\Z$, where $\pi_j(X,x_o)$ is placed in degree $j-1$. The Lie bracket is called the {\em Whitehead product}. The definition can be found in \cite[\S X.7]{whitehead}. The loop space homology has a multiplication (the Pontryagin product), which is induced by multiplication of loops. The Hurewicz homomorphism
$$
h : \pi_\bdot (L_{x_o}X,c_o) \to H_\bdot(L_{x_o} X)
$$
takes the Whitehead product to the Pontryagin commutator \cite[Thm.~X.7.10]{whitehead}\footnote{We multiply Whitehead's definition of the Whitehead product $[\alpha,\beta]$ by $(-1)^{|\alpha|}$ so that this is the case.}
$$
h([\alpha,\beta]) = h(\alpha)h(\beta) - (-1)^{|\alpha||\beta|} h(\beta)h(\alpha),
$$
where $|\alpha|$ denotes the degree of $\alpha$, etc.

When $\kk$ is a field of characteristic zero, the Hurewicz homomorphism
$$
\pi_\bdot(L_{x_o}X,c_o) \otimes \kk \to H_\bdot(L_{x_o} X;\kk)
$$
is injective and induces an isomorphism
$$
U \pi_\bdot(L_{x_o}X,c_o)\otimes \kk \overset{\simeq}{\To} H_\bdot(L_{x_o} X;\kk)
$$
of the universal enveloping algebra of $\pi_\bdot(L_{x_o} X)\otimes \kk$ with the loop space homology. A proof can be found in the appendix of \cite{milnor-moore}. The Lie algebra $\pi_\bdot(L_{x_o}X,c_o)\otimes \kk$ can be recovered from the loop space homology by taking primitives:
$$
\pi_\bdot(L_{x_o}X,c_o)\otimes \kk \cong PH_\bdot(L_{x_o} X;\kk) = \{u \in H_\bdot(L_{x_o} X;\kk):\Delta u = 1\otimes u + u \otimes 1\}.
$$

\section{Adams' Spectral Sequence for Loop Space Homology}
\label{sec:adams}

In this section, $\kk$ will be a field of characteristic 0 and $X$ will denote a simply connected space whose $\kk$ Betti numbers $\dim H_j(X;\kk)$ are finite. Fix a base point $x_o$ of $X$. In \cite{adams} (see also \cite[Appendix]{chen}), Adams gives a method for computing the homology of $L_{x_o} X$. For us, the important point is that he constructs a second quadrant spectral sequence
$$
E^1_{-s,t} = [\H_\bdot(X;\kk)^{\otimes s}]^{t-s} \implies H_{t-s}(L_{x_o} X;\kk),
$$
which converges to the homology of the loop space. The $E^1$-term is the tensor algebra
$$
T(\H_\bdot(X;\kk)[1])
$$
on the reduced homology of $X$ lowered in degree by 1.\footnote{To be clear, the degree $j$ component of $\H_\bdot(X;\kk)[1]$ is $\H_{j+1}(X;\kk)$.} The differential $\partial_1$ is the derivation of degree $-1$ defined by the map
$$
\partial_1 :  \H_\bdot(X;\kk) \to \H_\bdot(X;\kk)^{\otimes 2}
$$
dual to the map
$$
\xymatrix{
\H^\bdot(X;\kk)^{\otimes 2} \ar[r]^{J\otimes \id} & \H^\bdot(X;\kk)^{\otimes 2} \ar[r]^{\text{cup}} & \H^\bdot(X;\kk)
}
$$
where $J(u) = (-1)^{|u|} u$.

\begin{remark}
The original version of the spectral sequence is with $\Z$ coefficients. In that case, $\H(X)^{\otimes s}$ has to be replaced by $H_\bdot((X,x_o)^s)$. The differential is not so easily described in the presence of torsion.
\end{remark}

\begin{remark}
This spectral sequence is dual to the Eilenberg--Moore spectral sequence, which converges to $H^\bdot(L_{x_o}X;\kk)$. For a proof, see \cite[\S2.5]{chen}. This implies that (with coefficients $\kk$) the spectral sequence above is a spectral sequence in the category Hopf algebras, where the coproduct on the $E^1$ term $T(\H_\bdot(X;\kk)[1])$ is defined by making each element of $\H_\bdot(X;\kk)$ primitive. This implies that it can be restricted to its set of primitive elements to obtain a spectral sequence that converges to the Lie algebra $\pi_{\bdot}(X,x_o)\otimes \kk$, which we henceforth denote $\pi_{\bdot}(X,x_o)_\kk$.
\end{remark}

\begin{remark}
When $X$ is formal in the sense of rational homotopy theory, the spectral sequence above degenerates at $E^2$. In particular, it degenerates at $E^2$ when $X$ is a compact K\"ahler manifold by \cite{dgms}. When $X$ is a smooth variety (or the complement of a normal crossing divisor in a compact K\"ahler manifold), it is a spectral sequence of mixed Hodge structures as is its restriction to its set of primitive elements. (This also implies the degeneration statement for compact K\"ahler manifolds.)
\end{remark}

For future reference: the low degree terms of the $E^1$-term of the spectral sequence (with $\kk$ coefficients) are:
\begin{equation}
\label{eqn:E1}
\xymatrix@R=2ex{
H_2(X)^{\otimes 3} & \ar[l]_{\partial_1} [\H_\bdot(X)^{\otimes 2}]_6 & \ar[l]_(.35)\Delta H_6(X) & t=6 \cr
0 & [\H_\bdot(X)^{\otimes 2}]_5 & \ar[l]_(.4)\Delta H_5(X) & t=5 \cr
0 & H_2(X)^{\otimes 2} & \ar[l]_(.4)\Delta H_4(X) & t= 4 \cr
0 & 0 & H_3(X) & t = 3 \cr
0 & 0 & H_2(X) & t=2 \cr
s=-3 & s=-2 & s=-1 &
}
\end{equation}
where $\Delta : \H_\bdot(X;\kk) \to \H_\bdot(X;\kk)^{\otimes 2}$ is the reduced coproduct (i.e., the dual of the cup product),
$$
[\H_\bdot(X)^{\otimes 2}]_5 = H_2(X)\otimes H_3(X) + H_3(X)\otimes H_2(X)
$$
and
$$
[\H_\bdot(X)^{\otimes 2}]_6 =  H_3(X)\otimes H_3(X) + H_2(X)\otimes H_4(X) + H_4(X)\otimes H_2(X).
$$

The spectral sequence defines a filtration
$$
H_\bdot(L_{x_o};\kk) = A^0 H_\bdot(L_{x_o}) \supseteq A^1 H_\bdot(L_{x_o}) \supseteq A^2 H_\bdot(L_{x_o}) \supseteq \cdots
$$
whose $s$th graded quotient satisfies
$$
\Gr^s_A H_{t-s}(L_{x_o}X;\kk) \cong E^\infty_{-s,t}.
$$
We'll call this the {\em Adams filtration}. It restricts to a filtration of the Lie algebra
$$
\pi_{\bdot+1}(X,x_o)_\kk \cong PH_\bdot(L_{x_o}X;\kk).
$$
It has the following properties:
\begin{enumerate}

\item $\Gr_A^\bdot H_\bdot(L_{x_o} X;\kk)$ is the universal enveloping algebra of $\Gr_A^\bdot\pi_{\bdot+1}(X,x_o)_\kk$.

\item $\Gr_A^s \pi_\bdot(X,x_o)_\kk$ is a subquotient of $\H_\bdot(X;\kk)^{\otimes s}$.

\item the filtration is compatible with the bracket:
$$
[A^j \pi_\bdot(X,x_o)_\kk, A^k \pi_\bdot(X,x_o)_\kk] \subseteq A^{j+k} \pi_\bdot(X,x_o)_\kk.
$$

\item When $X$ is a compact K\"ahler manifold, $A^\bdot$ is the weight filtration $W_\bdot$ of the canonical MHS. More precisely, $W_{-m} = A^m$.

\item $\Gr_A^1 \pi_\bdot(X,x_o)_\kk$ is image of rational Hurewicz homomorphism
$$
h : \pi_\bdot(X,x_o)_\kk \to H_\bdot(X;\kk).
$$

\end{enumerate}

The following result gives the analogue for simply connected spaces of the exact sequence (\ref{eqn:extn}) that is used to define the classical Johnson homomorphism using the group algebra $\Z\pi_1(X,x_o) = H_0(L_{x_o} X)$. The second sequence is the analogue of the extension
$$
0 \to L^2 \pi_1(X,x_o)/L^3 \to \pi_1(X,x_o)/L^3 \to H_1(X) \to 0
$$
where $L^j \pi_1(X,x_o)$ denotes the $j$th term of the lower central series of $\pi_1(X,x_o)$. It occurs in \cite{ccm} where Carlson, Clemens and Morgan study the Hodge theory of $\pi_3$ of a simply connected 3-fold.

\begin{proposition}
\label{prop:exact_seqces}
There are exact sequences
$$
0 \to H_2(X;\kk)^{\otimes 2}/\im \Delta \to H_2(L_{x_o} X;\kk) \to H_3(X;\kk) \to 0
$$
and
$$
0 \to \Sym^2 H_2(X;\kk)/\im \Delta \overset{[\blank,\blank]}\to \pi_3(X,x_o)\otimes\kk \overset{h}{\to} H_3(X;\Q) \to 0
$$
where
$$
\Delta : H_4(X;\kk) \to \Sym^2 H_2(X;\kk) \subset H_2(X;\kk)^{\otimes 2}
$$
is dual to the cup product, $h$ is the Hurewicz homomorphism and ${[\blank,\blank]}$ is the Whitehead product.
\end{proposition}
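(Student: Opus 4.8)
The plan is to extract both sequences from Adams' spectral sequence, using the low-degree corner displayed in (\ref{eqn:E1}); no formality of $X$ is needed, since the relevant entries lie close enough to the edge of the second quadrant that only $\partial_1$ can act on them. For the first sequence I would compute $H_2(L_{x_o}X;\kk)$. In total degree $t-s=2$ the only possibly nonzero entries of $E^1$ are $E^1_{-1,3}=H_3(X)$ and $E^1_{-2,4}=H_2(X)^{\otimes 2}$: the columns with $s\le -3$ vanish because $\H_\bdot(X;\kk)$ is concentrated in degrees $\ge 2$, and the $s=0$ column contributes only in degree $0$. As $s=-1$ is effectively the rightmost column, nothing enters or leaves $E^1_{-1,3}$ in the relevant bidegrees, so $E^\infty_{-1,3}=H_3(X)$. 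For $s=-2$ the incoming differential is $\partial_1=\Delta\colon H_4(X)\to H_2(X)^{\otimes 2}$ as in (\ref{eqn:E1}) — its image lies in $\Sym^2 H_2(X)$ because the cup product on $H^2$ is commutative — the outgoing $\partial_1$ lands in $[\H_\bdot(X;\kk)^{\otimes 3}]_4=0$, and a glance at the bidegrees shows every $\partial_r$ with $r\ge 2$ has source and target $0$; hence $E^\infty_{-2,4}=E^2_{-2,4}=H_2(X)^{\otimes 2}/\im\Delta$. Since the Adams filtration on $H_2(L_{x_o}X;\kk)$ has $A^3=0$ and exactly the graded quotients $H_3(X)$ in filtration $1$ and $H_2(X)^{\otimes 2}/\im\Delta$ in filtration $2$, this is the first exact sequence.

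For the second sequence I would run the same argument on the sub-spectral sequence of primitive elements (see Section~\ref{sec:adams}), which converges to $\pi_{\bdot+1}(X,x_o)_\kk\cong PH_\bdot(L_{x_o}X;\kk)$, with $\pi_3(X,x_o)_\kk$ in degree $2$; its $E^1$-term is the free graded Lie algebra on $\H_\bdot(X;\kk)[1]$ with differential the derivation extending $\partial_1$. The two contributing entries become: in filtration $1$, the weight-one piece $H_3(X)$ (all of $\H_\bdot(X;\kk)[1]$ being primitive); in filtration $2$, the weight-two piece of the free Lie algebra on two generators of degree $1$. Because the bracket of two odd-degree elements is symmetric, $[a,b]=a\otimes b+b\otimes a$, this weight-two piece is $\Sym^2 H_2(X)$, and modding out by the incoming $\partial_1$ (again $\Delta$) leaves $\Sym^2 H_2(X)/\im\Delta$; higher differentials again vanish for degree reasons. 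Thus $\Gr^1_A\pi_3(X,x_o)_\kk=H_3(X)$ and $\Gr^2_A\pi_3(X,x_o)_\kk=\Sym^2 H_2(X)/\im\Delta$, giving
$$
0\to \Sym^2 H_2(X;\kk)/\im\Delta\to \pi_3(X,x_o)_\kk\to H_3(X;\kk)\to 0 .
$$

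It remains to identify the maps. The quotient onto $\Gr^1_A$ is the rational Hurewicz homomorphism $h$ by property (v) of the Adams filtration, which together with $\Gr^1_A\pi_3(X,x_o)_\kk=H_3(X)$ also shows $h$ is onto. For the inclusion: $\pi_2(X,x_o)_\kk=H_2(X)$ (Hurewicz) carries the trivial Adams filtration and $A^3\pi_3(X,x_o)_\kk=0$, so the Whitehead product $\pi_2(X)_\kk\otimes\pi_2(X)_\kk\to A^2\pi_3(X)_\kk$ — which lands there by property (iii) — coincides with its own associated graded, namely the composite $H_2(X)^{\otimes 2}\twoheadrightarrow\Sym^2 H_2(X)\twoheadrightarrow\Sym^2 H_2(X)/\im\Delta$ induced by the free-Lie bracket $a\otimes b\mapsto a\otimes b+b\otimes a$ on $\Gr_A\pi_\bdot(X)_\kk$ (property (i)). Hence the Whitehead product factors through $\Sym^2 H_2(X)$ with image $A^2\pi_3(X)_\kk$ and kernel precisely $\im\Delta$, which is the required injection; splicing the two short exact sequences yields the stated one.

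I expect the only real subtlety to be this last step — matching the subobject and quotient produced by the spectral sequence with the Whitehead product and the Hurewicz map — and, within it, the Koszul sign which makes the bracket of two degree-one (hence odd) primitives symmetric, so that one obtains $\Sym^2 H_2(X)$ and not $\Lambda^2 H_2(X)$, together with the compatible fact that $\im\Delta\subseteq\Sym^2 H_2(X)$ (commutativity of cup product on $H^2$). Given properties (i)--(v) of the Adams filtration and the Milnor--Moore identification $\pi_\bdot(L_{x_o}X)_\kk\cong PH_\bdot(L_{x_o}X;\kk)$ recalled in Section~\ref{sec:adams}, the rest — vanishing of all higher differentials, $\pi_2(X)_\kk\cong H_2(X)$, and triviality of the filtration in the bottom degrees — is routine bookkeeping with the bidegrees in (\ref{eqn:E1}).
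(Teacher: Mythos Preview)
Your proposal is correct and follows essentially the same approach as the paper: read off the two-step Adams filtration in total degree $2$ from the diagram (\ref{eqn:E1}), observe $E^\infty=E^2$ there for degree reasons, and obtain the second sequence by restricting to primitives. You supply more detail than the paper does---in particular the explicit check that all higher differentials vanish, the Koszul-sign explanation of why the weight-two primitive piece is $\Sym^2 H_2(X)$ rather than $\Lambda^2 H_2(X)$, and the identification of the two maps with $h$ and the Whitehead product via properties (i), (iii), (v)---whereas the paper's proof simply asserts that the second sequence is the restriction of the first to primitives.
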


When $H_2(X;\Z)$ and $H_3(X;\Z)$ are torsion free, the first sequence is exact with $\Z$ coefficients.

\begin{proof}
The spectral sequence (\ref{eqn:E1}) implies that, $A^3 H_2(L_{x_o}X;\kk) = 0$. It also implies that for the terms of total degree 2, $E^\infty = E^2$. The first sequence is
$$
0 \to A^2H_2(L_{x_o}) \to A^1 H_2(L_{x_o}) \to \Gr_A^1 H_2(L_{x_o}) \to 0.
$$
The second sequence is its restriction to the set of primitive elements of $H_\bdot(L_{x_o};\Q)$.
\end{proof}

\section{Johnson homomorphisms for Simply Connected Manifolds}
\label{sec:johnson}

In this section we use the action of the Torelli group $\ho T_X$ of a simply connected finite complex on the homology of its loop space $L_{x_o}X$ to define generalized Johnson homomorphisms. The main point is that $\ho T_X$ acts trivially on the graded quotients of the Adams filtration of $H_\bdot(L_{x_o}X;\Q)$.

Denote the group of homotopy classes of homotopy equivalence of $X$ by $\ho E_X$ and its image in $\Aut H_\bdot(X;\Q)$ by $\ho S_X$. Since $X$ is simply connected, $\ho E_X$ acts on $\pi_\bdot(X,x_o)$ and thus on $H_\bdot(L_{x_o};\kk)$. Denote the kernel of the homomorphism $\ho E_X \to \Aut H^\bdot(X;\kk)$ by $\ho T_X$.

The action of $\ho E_X$ on $H_\bdot(L_{x_o}X;\Q)$ induces a homomorphism $\ho E_X \to \Aut \pi_\bdot(X)\otimes\Q$. This action preserves the Adams filtrations and one has a diagram
$$
\xymatrix@C=12pt{
1 \ar[r] & \ho T_X \ar[r]\ar[d] & \ho E_X \ar[r]\ar[d] & \ho S_X \ar[r]\ar[d] & 1
\cr
1 \ar[r] & A^{-1} \End \pi_\bdot(X,x_o)_\Q \ar[r] & \End \pi_\bdot(X,x_o)_\Q \ar[r] & \Aut H_\bdot(X;\Q) 
}
$$
and therefore an $\ho S_X$ invariant homomorphism
\begin{equation}
\label{eqn:ho-tau}
\ho\tau_X : H_1(\ho T_X) \to \Gr^{-1}_A \End \pi_\bdot(X,x_o)_\kk.
\end{equation}

In degree 2 this produces the Johnson homomorphism (\ref{eqn:tau}):

\begin{corollary}
The group $\ho E_X$ acts on the sequences in Proposition~\ref{prop:exact_seqces}. This action induces an $\ho S_X$ invariant homomorphism
\begin{equation}
\label{eqn:invariant}
\ho T_X \to \Hom_\kk(H_3(X;\Q),\Sym^2 H_2(X;\Q)/\im \Delta).
\end{equation}
\end{corollary}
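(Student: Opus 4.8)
The plan is to extract the corollary from the general machinery developed in this section by specialising the $\ho S_X$-invariant homomorphism \eqref{eqn:ho-tau} to degree $2$ and identifying the relevant graded pieces of the Adams filtration with the groups appearing in Proposition~\ref{prop:exact_seqces}. First I would observe that $\ho E_X$ acts on $L_{x_o}X$ up to homotopy, hence on $H_\bdot(L_{x_o}X;\kk)$ as a Hopf algebra, so it preserves the Adams filtration $A^\bdot$ (this is the content of the displayed commutative diagram, once one knows $\ho E_X$ acts by filtered automorphisms; that in turn follows because $A^\bdot$ is defined functorially from the spectral sequence, whose $E^1$-term is built functorially from $\H_\bdot(X;\kk)$). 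In particular $\ho E_X$ acts on the two-term filtration
$$
H_2(L_{x_o}X;\kk) = A^1H_2(L_{x_o}X) \supseteq A^2H_2(L_{x_o}X) \supseteq A^3H_2(L_{x_o}X) = 0
$$
from the proof of Proposition~\ref{prop:exact_seqces}, and $\ho T_X$ acts trivially on $\Gr^1_A H_2 \cong H_3(X;\kk)$ (this is the image of the Hurewicz map, a subquotient of $H_\bdot(X;\kk)$ on which $\ho T_X$ acts trivially by definition) and on $\Gr^2_A H_2 = A^2 H_2 \cong H_2(X;\kk)^{\otimes 2}/\im\Delta$ (a subquotient of $H_2(X;\kk)^{\otimes 2}$, again fixed by $\ho T_X$). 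Restricting to primitives gives the same statement for the second exact sequence of Proposition~\ref{prop:exact_seqces}, with $\Gr^2_A\pi_3 \cong \Sym^2 H_2(X;\kk)/\im\Delta$.

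Next, given $\phi \in \ho T_X$, since $\phi$ acts trivially on the kernel and cokernel of the extension
$$
0 \to \Sym^2 H_2(X;\kk)/\im\Delta \to \pi_3(X,x_o)_\kk \to H_3(X;\kk) \to 0,
$$
the difference $\phi_\ast - \id$ factors through a homomorphism $H_3(X;\kk) \to \Sym^2 H_2(X;\kk)/\im\Delta$ — exactly as in the classical case reviewed in Section~2. Sending $\phi \mapsto \phi_\ast - \id$ defines the desired map $\ho T_X \to \Hom_\kk(H_3(X;\Q),\Sym^2 H_2(X;\Q)/\im\Delta)$. I would then check it is a group homomorphism: for $\phi,\psi \in \ho T_X$ one has $(\phi\psi)_\ast - \id = (\phi_\ast - \id) + (\psi_\ast - \id) + (\phi_\ast - \id)(\psi_\ast - \id)$, and the last term vanishes because $(\psi_\ast - \id)$ lands in $\Sym^2 H_2/\im\Delta$ on which $(\phi_\ast - \id)$ acts as zero (as $\phi$ fixes that subspace pointwise). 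Finally, $\ho S_X$-invariance is inherited from the $\ho S_X$-equivariance of the whole diagram preceding \eqref{eqn:ho-tau}: conjugating $\phi$ by a lift of $g \in \ho S_X$ conjugates $\phi_\ast - \id$ by the action of $g$ on the (split-as-$\ho S_X$-modules, rationally) kernel and cokernel, which is precisely the natural $\ho S_X$-action on $\Hom(H_3(X),\Sym^2 H_2(X)/\im\Delta)$.

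There is no serious obstacle here; the result is essentially a repackaging of Proposition~\ref{prop:exact_seqces} together with the construction of \eqref{eqn:ho-tau}. The one point requiring a little care is the identification $\Gr^{-1}_A \End\pi_\bdot(X,x_o)_\kk$ in total degree $2$ with $\Hom_\kk(H_3(X),\Sym^2 H_2(X)/\im\Delta)$: an endomorphism lowering Adams filtration by one, restricted to the weight/degree-$2$ part of $\pi_\bdot(X,x_o)_\kk$, must carry $\Gr^1_A\pi_3 \cong H_3(X)$ into $\Gr^2_A\pi_3 \cong \Sym^2 H_2(X)/\im\Delta$ and kill the rest, which gives exactly this Hom-space. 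I would spell this out by comparing with the low-degree terms \eqref{eqn:E1} of the spectral sequence. Everything else — well-definedness modulo the filtration, the homomorphism property, equivariance — is formal.
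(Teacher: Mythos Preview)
Your proposal is correct and follows essentially the same approach as the paper: the corollary is stated there without proof, as the immediate specialization of the general homomorphism \eqref{eqn:ho-tau} to degree~$2$, using the identification of the graded pieces of $\pi_3(X,x_o)_\kk$ from Proposition~\ref{prop:exact_seqces}. Your write-up simply spells out what the paper leaves implicit; the one unnecessary aside is the parenthetical about the sequence being ``split as $\ho S_X$-modules'', which is not needed for the equivariance argument (conjugation invariance follows formally from the normality of $\ho T_X$ in $\ho E_X$ and the naturality of the construction, without any splitting).
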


Here we encounter our first problem. If $M$ is a compact K\"ahler manifold with $b_2=1$, then $H_2(M;\Q)/\im \Delta$ vanishes as the cup product
$$
\Sym^2 H^2(M;\Q) \to H^4(M;\Q)
$$
is an isomorphism. So, in this case, this Johnson homomorphism vanishes. This does not mean, however, that $H_1(T_M;\Q)$ vanishes as the homomorphism $H_1(T_M;\Q) \to H_1(\ho T_M;\Q)$ may have a non-trivial kernel. This kernel is detected by the {\em distortion} of the Pontryagin classes, as I learned from Sullivan \cite[\S13]{sullivan}.

\subsection{Distortion of the Pontryagin classes}
\label{sec:distortion}

Suppose now that $M$ is a simply connected smooth closed manifold and that $\varphi : M \to M$ is a diffeomorphism of $M$ that represents a class in $T_M$. The mapping torus $M_\varphi$ of $M$ fibers over the circle.\footnote{We take $M_\varphi$ to be the quotient of $M\times [0,1]$ where $(\varphi(x),0)$ is identified with $(x,1)$.} Denote the projection by $\pi : M_\varphi \to M$. Since $\varphi$ acts trivially on the homology of $M$, the Wang sequence of $\pi$ is a collection of short exact sequences
$$
\xymatrix{
0 \ar[r] & H^{j-1}(M) \ar[r] & H^j(M_\varphi)\ar[r]^{\iota^\ast} & H^j(M) \ar[r] & 0
}
$$
where the first map is Poincar\'e dual to the map $\iota_\ast : H_\bdot(M) \to H_\bdot(M_\varphi)$ induced by the inclusion $\iota : M \to M_\varphi$ of a fiber. The Pontryagin class $p_k(M)$ has a natural lift to $H^{4k}(M_\varphi)$. Namely $p_k(M_\varphi)$. When $\varphi$ is homotopic (and therefore smoothly homotopic) to the identity, there is a second lift, that we now describe.

Suppose that $F : M\times [0,1] \to M$ is a smooth homotopy from $\varphi$ to $\id_M$. We may assume that it is the projection $M\times I \to M$ when $I= [0,\epsilon]$ and $[1-\epsilon,1]$ for some $\epsilon > 0$. In this case, $F$ induces a smooth homotopy equivalence
$$
\Fhat : M\times S^1 \to M_\varphi
$$
and a map
$$
\xymatrix{
0 \ar[r] & H^{j-1}(M) \ar[r]\ar@{=}[d] & H^j(M_\varphi)\ar[r]^{\iota^\ast}\ar[d]^{\Fhat^\ast} & H^j(M) \ar[r] \ar@{=}[d] & 0 \cr
0 \ar[r] & H^{j-1}(M) \ar[r]^(.45){\times \theta} & H^j(M\times S^1)\ar[r] & H^j(M) \ar[r] & 0
}
$$
of Wang sequences, where $\theta$ is the positive integral generator of $H^1(S^1)$. The map $\Fhat^\ast$ is an isomorphism. The preimage of $p_k(M\times S^1)$ under $\Fhat^\ast$ is a second lift of $p_k(M)$ to $H^{4k}(M)$.

\begin{definition}
The {\em distortion} of $p_k(M)$ under $F$ is the difference
$$
\deltatilde_k(F) = \Fhat^\ast p_k(M_\varphi) - p_k(M\times S^1) \in H^{4k-1}(M).
$$
\end{definition}

These can be assembled into the {\em distortion of $F$}, which is defined by
$$
\deltatilde(F) := \big(\deltatilde_1(F),\dots,\deltatilde_{\lfloor m/4 \rfloor}(F)\big) \in \bigoplus_{0 < 4k \le m} H^{4k-1}(M)
$$
where $m=\dim_\R M$. The distortion of an isotopy vanishes as, in that case, $\Fhat$ is a diffeomorphism.

To obtain an invariant of the mapping class of $\varphi$, we have to mod out by the subgroup of distortions of smooth homotopies from $\id_M$ to itself. This is the group $I$ of {\em indeterminacies}. Set
$$
D_M = \big[\bigoplus_{4k \le \dim_\R M} H^{4k-1}(M;\Q)\big]/I
$$
The indeterminacy of a diffeomorphism $\varphi$ that is homotopic to the identity via $F$ is defined to be the image of $\deltatilde(F)$ in $D_M$.

\begin{remark}
\label{rem:indeterminacy}
The element $\deltatilde(F)$ of $I$ of a smooth homotopy from $\id_M$ to itself can be computed as follows. Identify $H^\bdot(M\times S^1)$ with
$$
H^\bdot(M)\otimes H^\bdot(S^1) \cong (H^\bdot(M) \otimes 1) \oplus (H^{\bdot-1}\otimes \theta)
$$
where, as above, $\theta$ is the positive integral generator of $H^1(S^1)$. The total Pontryagin classes of $M$ and $M\times S^1$ are related by
$$
p(M\times S^1) = p(M) \otimes 1.
$$
The distortion of $F$ is determined by the equation
\begin{equation}
\label{eqn:distortion}
\deltatilde(F)\otimes \theta = \Fhat^\ast (p(M)\otimes 1) - p(M\times S^1).
\end{equation}
\end{remark}

We will regard $H^\bdot(M)$ as a left $\G_M$-module via $\psi : u \mapsto (\psi^{-1})^\ast u$.

\begin{proposition}
\label{prop:distortion}
If $M$ is a simply connected smooth manifold, there is a well defined homomorphism
$$
\delta : \ker\{\G_M \to \ho E_M\} \to D_M.
$$
It takes the mapping class of a diffeomorphism $\varphi$ to $\deltatilde(F) \bmod I$, where $F$ is any smooth homotopy from $\varphi$ to $\id_M$. Moreover, the left $\G_M$ action on $H^\bdot(M)$ induces an action on $D_M$: for all $\psi \in \G_M$
$$
\delta(\psi\varphi \psi^{-1}) = \psi\cdot\delta(\varphi).
$$
In particular, $T_M$ acts trivially on $D_M$ so that it is an $S_M$-module.
\end{proposition}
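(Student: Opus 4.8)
The plan is to extract the map $\delta$ from a single additivity property of the distortion and then read off well-definedness, the homomorphism property, and equivariance as formal consequences. Note first that every diffeomorphism involved lies in $T_M$: the kernel of $\G_M\to\ho E_M$ is contained in $T_M$, and a diffeomorphism smoothly isotopic to $\id_M$ lies there too; such a diffeomorphism acts trivially on $H^\bdot(M;\Q)$, so no ``twist'' term will appear below (in general one would see a twist by $\varphi^\ast$).

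\emph{The additivity lemma.} The core assertion is that if $\varphi,\psi$ are diffeomorphisms smoothly homotopic to $\id_M$, with smooth homotopies $F:\varphi\simeq\id_M$ and $G:\psi\simeq\id_M$, then the concatenation $H:=F\cdot(\varphi\circ G):\varphi\psi\simeq\id_M$ satisfies
$$
\deltatilde(H)=\deltatilde(F)+\deltatilde(G)\qquad\text{in }\ \bigoplus_k H^{4k-1}(M;\Q).
$$
I would prove this by realizing the three mapping tori simultaneously as the boundary of one manifold. Choosing a homomorphism from the fundamental group of a pair of pants $P$ (free of rank two) to $\Diff^+M$ that sends the two ``leg'' loops to $\varphi$ and $\psi$ produces a flat $M$-bundle $V\to P$, a compact oriented manifold with $\partial V\cong M_\varphi\sqcup M_\psi\sqcup\overline{M_{\varphi\psi}}$. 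Because $F$ and $G$ kill the two monodromies up to homotopy and $P$ is homotopy equivalent to a wedge of two circles, the associated $\mathrm{aut}^+M$-fibration over $P$ is fibre-homotopy trivial, so there is a homotopy equivalence $\Phi:M\times P\to V$; it can be arranged to restrict on the three boundary circles to $\widehat F$, $\widehat G$, and $\widehat H$. Since $H^{\ge2}(P)=0$, for each $k$ the class $\Phi^\ast p_k(V)$ lies in $H^{4k}(M\times P)=H^{4k}(M)\oplus\bigl(H^{4k-1}(M)\otimes H^1(P)\bigr)$, with first component $p_k(M)$ by restriction to a fibre; restricting the second component $c_k\in H^{4k-1}(M)\otimes H^1(P)$ to the three boundary circles recovers $\deltatilde_k(F)$, $\deltatilde_k(G)$, $\deltatilde_k(H)$, and the relation $[\partial_1]+[\partial_2]+[\partial_3]=0$ in $H_1(P)$ forces the sum. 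I expect this step to be the main obstacle; the delicate part is the sign and orientation bookkeeping, namely the reversed orientation on the third boundary component (which contributes a $-\theta$) and the verification that $\Phi$ can be chosen to restrict to $\widehat H$ with $H$ in the homotopy class of $F\cdot(\varphi\circ G)$.

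\emph{Well-definedness and the homomorphism property.} Apply the lemma with $\psi=\varphi^{-1}$, also homotopic to $\id_M$: for any homotopies $F,F':\varphi\simeq\id_M$ and $G:\varphi^{-1}\simeq\id_M$, the concatenations $F\cdot(\varphi\circ G)$ and $F'\cdot(\varphi\circ G)$ are smooth homotopies from $\id_M$ to itself, hence have distortion in $I$, giving $\deltatilde(F)\equiv-\deltatilde(G)\equiv\deltatilde(F')\pmod{I}$. So $\delta(\varphi):=\deltatilde(F)\bmod I$ is independent of the homotopy, and the lemma reads $\delta(\varphi\psi)=\delta(\varphi)+\delta(\psi)$ in $D_M$. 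If $\varphi'$ is isotopic to $\varphi$ then $\sigma:=\varphi'\varphi^{-1}$ is smoothly isotopic to $\id_M$, so $\delta(\sigma)=0$ because the distortion of an isotopy vanishes ($\widehat F$ being a diffeomorphism); hence $\delta(\varphi')=\delta(\sigma)+\delta(\varphi)=\delta(\varphi)$. Thus $\delta$ descends to $\ker\{\G_M\to\ho E_M\}$, is given by the stated formula, and is a homomorphism.

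\emph{Equivariance.} For a diffeomorphism $\psi$ of $M$, the assignment $(x,t)\mapsto(\psi x,t)$ descends to a diffeomorphism $M_\varphi\xrightarrow{\ \sim\ }M_{\psi\varphi\psi^{-1}}$ compatible with the fibre inclusions and carrying $\widehat F$ to $\widehat{\psi F\psi^{-1}}$ up to homotopy, where $\psi F\psi^{-1}$ is the conjugated homotopy of $\psi\varphi\psi^{-1}$ to $\id_M$. Pontryagin classes are natural under diffeomorphisms, so comparing the two Wang sequences gives $\deltatilde(\psi F\psi^{-1})=(\psi^{-1})^\ast\deltatilde(F)$, i.e.\ $\delta(\psi\varphi\psi^{-1})=\psi\cdot\delta(\varphi)$ for the left action $u\mapsto(\psi^{-1})^\ast u$ on $H^\bdot(M)$. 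Taking $\varphi$ to be a self-homotopy of $\id_M$ shows $I$ is $\G_M$-stable, so this action descends to $D_M$. Finally $T_M$ acts trivially on $H_\bdot(M;\Q)$, hence on each $H^{4k-1}(M;\Q)$, hence on $\bigoplus_k H^{4k-1}(M;\Q)$ and on its quotient $D_M$; so $D_M$ is a module over $S_M=\G_M/T_M$.
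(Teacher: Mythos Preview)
Your argument is correct. For the equivariance clause it coincides with the paper's: both conjugate the homotopy, use the diffeomorphism $M_\varphi\to M_{\psi\varphi\psi^{-1}}$ given by $(x,t)\mapsto(\psi(x),t)$, and invoke naturality of Pontryagin classes to obtain $\psi^\ast\deltatilde(G)=\deltatilde(F)$. Where you diverge is on well-definedness and the homomorphism property: the paper dispatches both in one line each (``follows from the definitions'', ``an exercise in the definitions''), whereas you prove an explicit additivity formula $\deltatilde\big(F\cdot(\varphi\circ G)\big)=\deltatilde(F)+\deltatilde(G)$ by building the flat $M$-bundle over a pair of pants $P$ with boundary $M_\varphi\sqcup M_\psi\sqcup\overline{M_{\varphi\psi}}$, fibre-homotopy trivializing it via $F$ and $G$, and reading off the relation from $[\partial_1]+[\partial_2]+[\partial_3]=0$ in $H_1(P)$. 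This is the standard cobordism mechanism behind additivity of secondary invariants (eta, Chern--Simons), and it makes the structure transparent at the cost of the orientation and boundary-restriction bookkeeping you flag, which is indeed routine once one fixes a retraction of $P$ onto $S^1\vee S^1$ carrying $\partial_1,\partial_2$ to the two wedge circles and $\partial_3$ to their concatenation. Your extraction of well-definedness from additivity (take $\psi=\varphi^{-1}$, so both concatenations are self-homotopies of $\id_M$ and hence land in $I$) is a clean way to avoid a separate argument.
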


\begin{proof}
That $\delta(\varphi)$ is well defined follows from the definitions. To prove that it is a homomorphism is an exercise in the definitions. Now suppose that $\varphi$ is a diffeomorphism of $M$ that is homotopic to the identity and that $\psi$ is a arbitrary orientation preserving diffeomorphism. Suppose that $F$ is a smooth homotopy from $\varphi$ to $\id_M$. Then
$$
G := \psi \circ F \circ (\psi^{-1}\times \id)
$$
is a homotopy from $\psi\varphi\psi^{-1}$ to $\id_M$. The map
$$
\psihat : M_\varphi \to M_{\psi\varphi\psi^{-1}},\quad (x,t) \mapsto (\psi(x),t)
$$
is a well defined diffeomorphism and the diagram
$$
\xymatrix{
M\times S^1 \ar[r]^\Fhat\ar[d]_{\psi\times 1} & M_\varphi \ar[d]^\psihat \cr
M\times S^1 \ar[r]^{\widehat{G}} & M_{\psi\varphi\psi^{-1}}
}
$$
commutes, where $\widehat{G}$ is induced by $G$. Since $\varphi^\ast p(M) = p(M)$, we have
\begin{align*}
\psi^\ast \deltatilde(G) &= (\psi\times 1)^\ast \widehat{G}^\ast p(M_{\psi\varphi\psi^{-1}}) - (\psi^\ast p(M))\times 1
\cr
&= \Fhat^\ast\psihat^\ast p(M_{\psi\varphi\psi^{-1}}) - (p(M)\times 1)
\cr
&= \Fhat^\ast p(M_\varphi) - (p(M)\times 1)
\cr
&= \deltatilde(F)
\end{align*}
which implies that $\delta(\psi\varphi \psi^{-1}) = \psi\cdot\delta(\varphi)$.
\end{proof}

\begin{corollary}
The distortion group $D_M$ is contained in the center of $T_M$.
\end{corollary}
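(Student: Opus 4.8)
The statement is to be read as: the normal subgroup $K := \ker\{\G_M \to \ho E_M\}$ is central in $T_M$, where one identifies $K$ (modulo its finite torsion subgroup) with the lattice $\delta(K) \subset D_M$ produced by Sullivan's results. Note first that $K \subseteq T_M$, since a diffeomorphism homotopic to $\id_M$ acts as the identity on $H_\bdot(M;\Z)$; and $K$ is normal in $\G_M$, being the kernel of a homomorphism, so for $\psi \in T_M$ and $\varphi \in K$ the commutator $[\psi,\varphi] := \psi\varphi\psi^{-1}\varphi^{-1}$ again lies in $K$. The plan is to prove centrality by feeding this commutator into $\delta$.

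Since $D_M$ is abelian and $\delta$ is a homomorphism, writing the group law additively gives $\delta([\psi,\varphi]) = \delta(\psi\varphi\psi^{-1}) - \delta(\varphi)$. By the equivariance in Proposition~\ref{prop:distortion}, $\delta(\psi\varphi\psi^{-1}) = \psi\cdot\delta(\varphi)$, and, again by Proposition~\ref{prop:distortion}, $T_M$ acts trivially on $D_M$, so $\psi\cdot\delta(\varphi) = \delta(\varphi)$. Hence $\delta([\psi,\varphi]) = 0$: every commutator of an element of $T_M$ with an element of $K$ lies in $\ker\delta$.

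To conclude I would invoke the structural input recalled in the introduction (Sullivan's theorem, Section~\ref{sec:sullivan}): $\delta$ restricts on $K$ to a homomorphism with finite kernel (the torsion subgroup of $K$) and image a full-rank lattice in $D_M$. Under the resulting identification of $K/\ker\delta$ with that lattice, the computation above says precisely that $T_M$ centralizes it, i.e.\ $D_M \subseteq Z(T_M)$. The only subtlety — and the step I would flag as the real content beyond Proposition~\ref{prop:distortion} — is this last passage: deducing $[\psi,\varphi] = 1$ on the nose would additionally require $K$ to be torsion-free (or $T_M$ to act trivially on the finite group $\ker\delta$), whereas for the corollary as stated, which concerns the $\Q$-vector space $D_M$, the commutator identity $\delta([\psi,\varphi]) = 0$ is already exactly what is needed.
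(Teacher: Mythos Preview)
Your proof is correct and is exactly the intended argument: the paper gives no separate proof, treating the corollary as an immediate consequence of the equivariance formula $\delta(\psi\varphi\psi^{-1}) = \psi\cdot\delta(\varphi)$ together with the last clause of Proposition~\ref{prop:distortion} (that $T_M$ acts trivially on $D_M$). Your observation that the statement should be read at the level of the lattice $\delta(K)\subset D_M$, and your flagging of the torsion issue for the on-the-nose group-theoretic statement, are apt; indeed the paper later (Section~\ref{sec:sullivan}) uses the corollary only in the form ``$D_M$ is central in $U_M$'', which is precisely what your computation $\delta([\psi,\varphi])=0$ establishes.
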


\subsection{Hypersurfaces and complete intersections}
\label{sec:ci_like}

In this section, $M$ will be a complex projective manifold and $\P^m$ will denote complex projective $m$-space. When $M$ is a smooth hypersurface or complete intersection of dimension $\ge 3$, the formulas for the distortion homomorphism simplify and also lift to $T_M$. The first observation is that all Pontryagin classes are multiples of powers of the hyperplane class.

For a smooth projective variety $M$ in $\P^N$, denote the class of a hyperplane section by $\w \in H^2(M)$.

\begin{lemma}
If $M$ is a smooth complete intersection in $\P^N$, then $p_j(M)$ is an integral multiple of $\w^{2j}$.
\end{lemma}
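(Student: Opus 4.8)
The plan is to exploit the fact that for a smooth complete intersection $M \subset \P^N$ of multidegree $(d_1,\dots,d_r)$, the total Chern class of the tangent bundle is completely determined by the hyperplane class $\w$ via the adjunction/normal bundle sequence. First I would write down the short exact sequence of bundles on $M$,
$$
0 \to T_M \to T_{\P^N}|_M \to N_{M/\P^N} \to 0,
$$
where $N_{M/\P^N} \cong \bigoplus_{i=1}^r \cO_M(d_i)$. Using the Euler sequence $0 \to \cO_{\P^N} \to \cO_{\P^N}(1)^{\oplus(N+1)} \to T_{\P^N} \to 0$ restricted to $M$, the Whitney formula gives
$$
c(T_M) = \frac{(1+\w)^{N+1}}{\prod_{i=1}^r (1 + d_i\,\w)},
$$
so every Chern class $c_j(M)$ is an integer multiple of $\w^j$; in particular $c_j(M) = 0$ for $j > \dim_\C M$ automatically, and all $c_j(M)$ lie in the subring $\Z[\w] \subseteq H^\bdot(M;\Z)$.

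Next I would recall the standard formula expressing the Pontryagin classes of the underlying real manifold in terms of the Chern classes of the complex tangent bundle: for a complex manifold, $1 - p_1 + p_2 - \cdots = c(T_M)\cdot c(\overline{T_M}) = \bigl(\sum_k c_k(M)\bigr)\bigl(\sum_k (-1)^k c_k(M)\bigr)$, which yields $p_j(M) = \sum_{a+b=2j} (-1)^{a} c_a(M) c_b(M)$ up to the usual sign conventions (the precise statement is $p_j(M) = (-1)^j c_{2j}(T_M \otimes_\R \C)$ and $T_M \otimes_\R \C \cong T_M \oplus \overline{T_M}$). Since each $c_a(M)$ is an integer multiple of $\w^a$, each summand $c_a(M)c_b(M)$ is an integer multiple of $\w^{a+b} = \w^{2j}$, and therefore $p_j(M)$ is an integer multiple of $\w^{2j}$, as claimed.

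The only genuine subtlety — and the step I would be most careful about — is book-keeping of signs and the passage from formal power series identities in $\Z[[\w]]$ to honest identities in the truncated ring $H^\bdot(M;\Z)$; one must note that the formal expressions for $c(T_M)$ and hence for $p(M)$ have rational-looking denominators, but since $(1 + d_i \w)^{-1} = \sum_{k\ge 0}(-d_i)^k \w^k$ has integer coefficients, the resulting $c_j(M)$ and $p_j(M)$ are genuinely integral multiples of powers of $\w$ rather than merely rational ones. I would also remark that one does not even need the full strength of complete intersection: the argument uses only that $N_{M/\P^N}$ is a direct sum of line bundles pulled back from $\P^N$, equivalently that $M$ is cut out by hypersurfaces, which is exactly the complete intersection hypothesis. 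There is nothing homotopy-theoretic here; it is a direct Chern–Weil / Whitney-formula computation, so I do not expect any serious obstacle beyond the sign conventions.
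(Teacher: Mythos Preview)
Your proposal is correct and follows essentially the same approach as the paper: both compute $c(T_M)$ via the normal bundle sequence to see that every $c_j(M)$ lies in $\Z[\w]$, and then invoke the identity $1 - p_1 + p_2 - \cdots = c(M)\,c(M)^\vee$ from Milnor--Stasheff to conclude. The only cosmetic difference is that the paper organizes the computation as an induction on the codimension (adding one hypersurface at a time), whereas you write down the closed formula $c(T_M) = (1+\w)^{N+1}\prod_i (1+d_i\w)^{-1}$ all at once; these are the same argument.
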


\begin{proof}
The result is proved by induction on the codimension of $M$ in $\P^N$. It holds for $\P^N$. Now suppose that $Y$ is a smooth complete intersection in $\P^N$ for which the result holds and that $M$ is the intersection of $Y$ with a hypersurface of degree $d$. Then the total Chern class of $M$ is
$$
c(M) = c(Y)(1+d\w)^{-1} \in H^\bdot(M;\Z).
$$
From \cite[Cor.~15.5]{milnor-stasheff} it follows that the Pontryagin classes $p_j(M)$ satisfy
$$
1 - p_1 + p_2 - p_3 + \cdots = c(M) c(M)^\vee
$$
where $c(M)^\vee$ is the total Chern class of the cotangent bundle of $M$. The inductive hypothesis implies that $p_j(M)$ is an integral multiple of $\w^{2j}$.
\end{proof}

This allows us to give a simpler formula for the distortion homomorphism for smooth complete intersections of dimension $\ge 3$ and to lift it to a homomorphism $T_M \to D_M$.\footnote{Kreck and Su made a similar construction in \cite[\S2]{kreck-su}.}

More generally, we work with smooth manifolds $M$ with a distinguished class $\w \in H^2(M)$ satisfying
\begin{equation}
\label{eqn:conditions}\tag{$\ast$}
b_1 = 0 \text{ and } p_k(M) \text{ is a rational multiple of }\w^{2k}.
\end{equation}
In the case of complete intersections, one takes $\w$ to be the class of a hyperplane section.

Suppose that $M$ satisfies these conditions and that $\varphi$ is a diffeomorphism that represents an element of $T_M$. The Wang sequence implies that
$$
H^2(M_\varphi) \to H^2(M)
$$
is an isomorphism. Denote the element of $H^2(M_\varphi)$ that corresponds to $\w$ by $\w_\varphi$. Since $p_k(M) = a_k \w^{2k}$, it has the natural natural lift $a_k\w_\varphi^{2k}$ to $H^{4k}(M_\varphi)$. 

Define the distortion of $\varphi$ to be the difference between the vectors
$$
\big(p_1(M_\varphi),p_2(M_\varphi),\dots \big) - \big(a_1 \w_\varphi^2,a_2 \w_\varphi^4,\dots\big) \in D_M.
$$

\begin{lemma}
This agrees with the previous definition when $\varphi$ is homotopic to the identity for varieties $M$ satisfying (\ref{eqn:conditions}) above. 
\end{lemma}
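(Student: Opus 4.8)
The plan is to compare the two definitions degree by degree, using the map of Wang sequences induced by a smooth homotopy $F$ from $\varphi$ to $\id_M$ as in Section~\ref{sec:distortion}. Fix such an $F$, together with the resulting homotopy equivalence $\Fhat : M\times S^1 \to M_\varphi$. Since $\varphi$ represents a class in $T_M$, the Wang sequence of $M_\varphi\to S^1$ is short exact in every degree, so $H^{4k-1}(M)$ includes into $H^{4k}(M_\varphi)$; write $\tilde\xi$ for the image of $\xi \in H^{4k-1}(M)$. First I would check that the new definition even makes sense: both $p_k(M_\varphi)$ and $a_k\w_\varphi^{2k}$ restrict on the fiber $M$ to $p_k(M) = a_k\w^{2k}$ (using $\iota^\ast\w_\varphi = \w$ and naturality of $p_k$), so their difference is of the form $\tilde\xi_k$ for a unique $\xi_k \in H^{4k-1}(M)$, and by definition this $\xi_k$ is the new distortion of $\varphi$ in degree $4k$.

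Next I would compute $\Fhat^\ast$. From the commuting ladder of Wang sequences, $\Fhat^\ast \tilde\xi = \xi\otimes\theta$ for every $\xi \in H^{4k-1}(M)$, so it suffices to identify $\Fhat^\ast\big(p_k(M_\varphi) - a_k\w_\varphi^{2k}\big) = \Fhat^\ast p_k(M_\varphi) - a_k\big(\Fhat^\ast\w_\varphi\big)^{2k}$. By the definition of the distortion of $F$ together with $p_k(M\times S^1) = p_k(M)\otimes 1$, one has $\Fhat^\ast p_k(M_\varphi) = p_k(M)\otimes 1 + \deltatilde_k(F)\otimes\theta$. The crucial point is that $\Fhat^\ast\w_\varphi = \w\otimes 1$: the classes $\Fhat^\ast\w_\varphi$ and $\w\otimes 1$ both restrict to $\w$ on the fiber, hence differ by an element of $H^1(M)\otimes\theta$, which vanishes because $b_1(M) = 0$ by hypothesis $(\ast)$. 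Therefore $a_k\big(\Fhat^\ast\w_\varphi\big)^{2k} = a_k\w^{2k}\otimes 1 = p_k(M)\otimes 1$, and so $\Fhat^\ast\big(p_k(M_\varphi) - a_k\w_\varphi^{2k}\big) = \deltatilde_k(F)\otimes\theta$. Comparing with $\Fhat^\ast\tilde\xi_k = \xi_k\otimes\theta$ gives $\xi_k = \deltatilde_k(F)$; assembling over all $k$ with $4k\le\dim_\R M$ and reducing modulo the indeterminacy subgroup $I$ yields the claimed equality in $D_M$.

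The one place where hypothesis $(\ast)$ is genuinely used — and the only real obstacle — is the identity $\Fhat^\ast\w_\varphi = \w\otimes 1$; without $b_1(M) = 0$ the homotopy equivalence $\Fhat$ could itself ``distort'' the lift $\w_\varphi$, and the two definitions would disagree. Everything else is bookkeeping with the pair of Wang sequences and the fact, recorded in Section~\ref{sec:distortion}, that $\Fhat^\ast$ is a ring isomorphism fitting into the displayed ladder.
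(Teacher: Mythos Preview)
Your argument is correct and follows the same route as the paper's: the crux in both is that $b_1(M)=0$ forces $\Fhat^\ast\w_\varphi = \w\otimes 1$, after which the comparison of $p_k(M_\varphi)-a_k\w_\varphi^{2k}$ with $\deltatilde_k(F)$ is immediate. Your version is more explicit about the Wang-sequence identifications (writing out $\tilde\xi_k$ and $\Fhat^\ast\tilde\xi=\xi\otimes\theta$), whereas the paper compresses this into a single chain of equalities by silently using $(\Fhat^\ast)^{-1}$ and the inclusions, but the content is the same.
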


\begin{proof}
Suppose that $F$ is a homotopy from $\varphi$ to the identity. Since $H^1(M)$ vanishes, $H^2(M\times S^1) \cong H^2(M)$ and the isomorphism
$
\Fhat^\ast : H^2(M_\varphi) \to H^2(M\times S^1)
$
takes $\w_\varphi$ to $\w \otimes 1$. This implies that
\begin{align*}
\deltatilde_k(F) &= \Fhat^\ast p_k(M_\varphi) - p_k(M)\otimes 1 \cr
&= p_k(M_\varphi) - (\Fhat^\ast)^{-1} (p_k(M)\otimes 1) = p_k(M_\varphi) - a_k \w_\varphi^{2k}.
\end{align*}
\end{proof}

One can also check that, for $M$ satisfying (\ref{eqn:conditions}), the extended distortion map $\delta : H_1(T_M) \to D_M$ is an $S_M$-invariant homomorphism. Details are left to the reader.

\begin{lemma}
If $M$ is a compact K\"ahler manifold satisfying (\ref{eqn:conditions}), then the group of indeterminacies vanishes, so that
$$
D_M = \bigoplus_{2j < \dim_\C M} H^{4j-1}(M;\Q).
$$
\end{lemma}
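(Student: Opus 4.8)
The plan is to show directly that the distortion $\deltatilde(F) \in \bigoplus_k H^{4k-1}(M;\Q)$ of every smooth homotopy $F$ from $\id_M$ to itself vanishes. Since the group $I$ of indeterminacies is by definition generated by such classes, this gives $I = 0$, and the stated description of $D_M$ will then follow from the definition of $D_M$ together with Poincar\'e duality. Reparametrizing $F$ to be the projection near the endpoints changes neither the homotopy class of $\Fhat$ nor $\deltatilde(F)$, so we may work in the setting of Section~\ref{sec:distortion}.

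Here the diffeomorphism is $\varphi = \id_M$, so the mapping torus is literally $M_\varphi = M \times S^1$, and $\Fhat : M \times S^1 \to M \times S^1$ is the self-map $(x,t) \mapsto (F(x,t),t)$ descended from $M \times [0,1]$. In particular $\Fhat$ commutes with the projection to $S^1$, hence preserves each fiber $M \times \{t\}$. By Remark~\ref{rem:indeterminacy}, $\deltatilde(F)\otimes\theta = \Fhat^\ast(p(M)\otimes 1) - p(M\times S^1) = \Fhat^\ast(p(M)\otimes 1) - p(M)\otimes 1$, so it is enough to prove that $\Fhat^\ast$ fixes $p(M)\otimes 1$. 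Note that $\Fhat$ need not be homotopic to $\id_{M\times S^1}$, so one really does have to use the hypothesis $(\ast)$.

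The crux is to control $\Fhat^\ast$ on $H^2$. Because $(\ast)$ gives $b_1(M) = 0$, the K\"unneth theorem shows that restriction to a fiber induces an isomorphism $H^2(M\times S^1;\Q)\cong H^2(M;\Q)$; and the restriction of $\Fhat$ to the fiber $M\times\{t_o\}$ is $x \mapsto F(x,t_o)$, which is homotopic to $\id_M$ via $s\mapsto F(\cdot,st_o)$. By naturality of restriction, $\Fhat^\ast$ is therefore the identity on $H^2(M\times S^1;\Q)$, and in particular $\Fhat^\ast(\w\otimes 1)=\w\otimes 1$. Now the second clause of $(\ast)$ is used: $p_k(M)=a_k\w^{2k}$, and since $\Fhat^\ast$ is a ring homomorphism, $\Fhat^\ast(p_k(M)\otimes 1)=a_k(\Fhat^\ast(\w\otimes 1))^{2k}=a_k(\w\otimes 1)^{2k}=p_k(M)\otimes 1$. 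Hence $\deltatilde_k(F)=0$ for all $k$, so $I=0$.

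Finally, $D_M=\bigoplus_{4j\le\dim_\R M}H^{4j-1}(M;\Q)=\bigoplus_{2j\le\dim_\C M}H^{4j-1}(M;\Q)$; when $\dim_\C M$ is even, the borderline term $2j=\dim_\C M$ is $H^{\dim_\R M-1}(M;\Q)$, which is Poincar\'e dual to $H^1(M;\Q)=0$, so the sum reduces to the range $2j<\dim_\C M$ asserted in the lemma. The one non-formal step is the identity $\Fhat^\ast|_{H^2}=\id$; the rest is bookkeeping, and I foresee no serious obstacle.
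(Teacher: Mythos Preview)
Your proof is correct and follows the same approach as the paper: show $\Fhat^\ast(\w\otimes 1)=\w\otimes 1$, then use that $\Fhat^\ast$ is a ring homomorphism together with $p_k(M)=a_k\w^{2k}$ to conclude $\deltatilde(F)=0$ via formula~(\ref{eqn:distortion}). Your fiber-restriction argument for $\Fhat^\ast|_{H^2}=\id$ is more explicit than the paper's one-line claim that ``$H^2(M\times S^1)$ is spanned by $\w$'' (which, read literally, uses $b_2=1$ rather than just $b_1=0$), so your version actually justifies the key step under the stated hypothesis~(\ref{eqn:conditions}) alone.
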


\begin{proof}
Suppose that $F$ is a homotopy from $\id_M$ to itself. The assumptions imply that $H^2(M\times S^1)$ is spanned by $\w$. Since $\Fhat^\ast$ is a ring homomorphism, this implies that
$$
\Fhat^\ast \w^j \otimes 1 = \w^j \otimes 1.
$$
Since $p_j(M)$ is a multiple of $\w^{2j}$,  the distortion of $F$ vanishes by formula (\ref{eqn:distortion}).
\end{proof}

In particular, for simply connected K\"ahler 3-folds with $b_2=1$, the distortion homomorphism induces a homomorphism $H_1(T_M) \to H^3(M;\Q)$. This is a rational version of the Johnson homomorphism defined by Kreck and Su in \cite{kreck-su:2}. This clarifies why their Johnson homomorphism detects $H_1(T_M;\Q)$ for such manifolds, while the Johnson homomorphism (\ref{eqn:tau}) does not. It is because the later detects the kernel of the homomorphism
$$
H_1(T_M) \to H_1(\ho T_M).
$$

\subsection{Other Johnson homomorphisms}

These can be constructed as needed by computing the relevant part of $\Gr^1_A \pi_\bdot(X,x_o)$. The next simplest case is:

\begin{proposition}
If $X$ is simply connected and $b_2(X) = 1$, then
$$
0 \to \big(H_2(X;\Q)\otimes H_3(X;\Q)\big)/\im \Delta \to \pi_4(X,x_o)_\kk \to PH_4(X;\kk) \to 0
$$
is exact, where $\Delta$ is dual to the cup product $H^2(X)\otimes H^3(X) \to H^5(X)$ and $PH_\bdot(X;\kk)$ denotes the primitive homology classes.
\end{proposition}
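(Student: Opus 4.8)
The plan is to run the Adams spectral sequence of Section~\ref{sec:adams}, restricted to its primitive elements, exactly as in the proof of Proposition~\ref{prop:exact_seqces} but one total degree higher. Writing $\{PE^r\}$ for the spectral sequence obtained by restricting $\{E^r\}$ to primitives, and using the identification $\pi_4(X,x_o)_\kk \cong PH_3(L_{x_o}X;\kk)$, the Adams filtration gives $\Gr^s_A\pi_4(X,x_o)_\kk \cong PE^\infty_{-s,\,s+3}$. So it is enough to compute these graded pieces and to show that, when $b_2(X)=1$, the only nonzero ones occur at $s=1,2$.

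By the Hopf-algebra structure recalled in Section~\ref{sec:adams}, $PE^1$ is the primitive part of $T(\H_\bdot(X;\kk)[1])$, hence the free graded Lie algebra $L(\H_\bdot(X;\kk)[1])$, with $H_j(X)$ placed in internal degree $j-1$ and tensor-length $1$. Its total-degree-$3$ part has three potentially nonzero pieces: in length $1$, the generators in internal degree $3$, namely $H_4(X;\kk)$; in length $2$, the degree-$(1,2)$ summand of the free Lie algebra, which is $H_2(X;\kk)\otimes H_3(X;\kk)$ (the two factors lie in distinct internal degrees, so graded antisymmetry merely identifies $[v,w]$ with $-[w,v]$); and in length $3$, the length-$3$ part of the free graded Lie algebra on the one odd-degree space $H_2(X;\kk)$. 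Here the hypothesis enters: since $b_2(X)=1$, this last space is the free graded Lie algebra on a single odd generator $x$, which is spanned by $x$ and $[x,x]$ and so vanishes in length $\ge 3$. (For $b_2(X)>1$ it is nonzero and the exact sequence gains a third term.) Lengths $\ge 4$ vanish for degree reasons. The differential $\partial_1$ on the length-$1$ generators is the sign-twisted dual of the cup product, and since it is a coderivation its restriction to primitives lands in the length-$2$ Lie part; in the two relevant bidegrees it becomes the map $\Delta\colon H_4(X)\to\Sym^2 H_2(X)$ dual to $H^2\otimes H^2\to H^4$ (the outgoing differential from $PE^1_{-1,4}$; the incoming one has source $0$) and the map $\Delta\colon H_5(X)\to H_2(X)\otimes H_3(X)$ dual to $H^2\otimes H^3\to H^5$ (the incoming differential to $PE^1_{-2,5}$; the outgoing one has target the zero length-$3$ group). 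Hence $PE^2_{-1,4}=\ker\Delta=PH_4(X;\kk)$ and $PE^2_{-2,5}=\big(H_2(X;\kk)\otimes H_3(X;\kk)\big)/\im\Delta$.

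It remains to check degeneration and to assemble the sequence. Every higher differential into or out of $PE^r_{-1,4}$ or $PE^r_{-2,5}$ has a source or target that is a subquotient of the free-Lie-algebra page in total degree $2$ and length $\ge 3$, hence is zero; and $PE^\infty_{-s,s+3}=0$ for $s\ge 3$ by the length-$3$ computation above together with sparsity. Thus $\pi_4(X,x_o)_\kk = A^1\supseteq A^2\supseteq A^3=0$ with $A^1/A^2\cong PH_4(X;\kk)$ and $A^2\cong\big(H_2(X;\kk)\otimes H_3(X;\kk)\big)/\im\Delta$. By property~(v) of the Adams filtration the quotient $\pi_4(X,x_o)_\kk\to A^1/A^2$ is the rational Hurewicz homomorphism, so its image is $PH_4(X;\kk)$ and its kernel is $A^2$; the subobject inclusion is the Whitehead product composed with the Hurewicz identifications $\pi_2(X,x_o)_\kk\cong H_2(X;\kk)$ and $\pi_3(X,x_o)_\kk\twoheadrightarrow H_3(X;\kk)$, which is well defined precisely because $[A^1\pi_2(X)_\kk,\,A^2\pi_3(X)_\kk]\subseteq A^3\pi_4(X)_\kk=0$ (again using $b_2(X)=1$). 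This is the asserted exact sequence.

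The proof is largely bookkeeping with the spectral sequence of Section~\ref{sec:adams}. The two points requiring care are the identification of $\partial_1$ with the duals of the cup products, with due attention to the sign twist $J$, and the vanishing of the length-$3$ term — the only place $b_2(X)=1$ is used, and nothing more than the elementary fact that a free graded Lie algebra on one odd generator is two-dimensional. Degeneration in this range then follows from the sparseness of the page, with no appeal to formality of $X$.
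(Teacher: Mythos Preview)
Your proof is correct and follows exactly the approach the paper intends: the paper states this proposition without proof, relying on the reader to rerun the Adams spectral sequence computation of Proposition~\ref{prop:exact_seqces} one total degree higher, which is precisely what you do. Your identification of the length-$3$ vanishing (via the free graded Lie algebra on a single odd generator) as the sole place where $b_2(X)=1$ is needed, and your observation that degeneration in this range follows from sparsity rather than formality, are both accurate and make the argument self-contained.
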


\begin{corollary}
For such $X$, there is a $\ho S_X$ invariant homomorphism
$$
H_1(\ho T_X) \to \Hom_\kk(PH_4(X;\kk), \big(H_2(X;\Q)\otimes H_3(X;\Q)\big)/\im \Delta).
$$
\end{corollary}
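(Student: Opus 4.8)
The plan is to extract this map from the general Johnson homomorphism (\ref{eqn:ho-tau}) by restricting the action of $\ho E_X$ to the degree-$4$ homotopy, exactly as the corollary following Proposition~\ref{prop:exact_seqces} is the degree-$3$ (equivalently loop-degree-$2$) component. Fix $X$ simply connected with $b_2(X)=1$ and a base point $x_o$. Since $X$ is simply connected, $\ho E_X$ acts on $\pi_\bdot(X,x_o)_\kk$ through its action on $H_\bdot(L_{x_o}X;\kk)$, preserving the Adams filtration $A^\bdot$ by naturality of the Adams spectral sequence and the functoriality of the subquotients in property~(2) of $\Gr_A^\bdot$; in particular it acts on the three-term sequence of the preceding proposition, which is the sequence of primitives
$$
0 \to \Gr_A^2\pi_4(X,x_o)_\kk \to \pi_4(X,x_o)_\kk \to \Gr_A^1\pi_4(X,x_o)_\kk \to 0
$$
with $\Gr_A^2\pi_4 = \big(H_2(X;\kk)\otimes H_3(X;\kk)\big)/\im\Delta$ and $\Gr_A^1\pi_4 = PH_4(X;\kk)$ (the image of the Hurewicz map, by property~(5)), the hypothesis $b_2(X)=1$ being used to force $A^3\pi_4 = 0$ so that the sub is all of $A^2\pi_4$.

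First I would observe that $\ho T_X$, being by definition the kernel of $\ho E_X \to \Aut H^\bdot(X;\kk)$, acts trivially on both ends of this sequence: $\Gr_A^1\pi_4 = PH_4(X;\kk)$ sits inside $H_4(X;\kk)$, and $\Gr_A^2\pi_4$ is built from $H_\bdot(X;\kk)$ by the functorial operations of tensor product and passage to the cokernel of the dual of cup product, so both are fixed by anything acting trivially on $H_\bdot(X;\kk)$. Consequently, for $\varphi\in\ho T_X$ the endomorphism $\varphi_\ast-\id$ of $\pi_4(X,x_o)_\kk$ annihilates the sub and takes values in it, that is,
$$
\varphi_\ast-\id \in \Hom_\kk\big(PH_4(X;\kk),\,\big(H_2(X;\kk)\otimes H_3(X;\kk)\big)/\im\Delta\big).
$$

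Next I would verify that $\varphi\mapsto\varphi_\ast-\id$ is a group homomorphism into this abelian target, so that it factors through $H_1(\ho T_X)$: for $\varphi,\psi\in\ho T_X$,
$$
(\varphi\psi)_\ast-\id=(\varphi_\ast-\id)+(\psi_\ast-\id)+(\varphi_\ast-\id)(\psi_\ast-\id),
$$
and the bilinear term vanishes because $\psi_\ast-\id$ already lands in $\Gr_A^2\pi_4$ while $\varphi_\ast-\id$ kills $\Gr_A^2\pi_4$. Finally, $\ho S_X$-invariance follows from the identity $(g\varphi g^{-1})_\ast-\id = g_\ast(\varphi_\ast-\id)g_\ast^{-1}$ for $g\in\ho E_X$: since $g_\ast$ acts on the sub and on the quotient through its image $\bar g\in\ho S_X$, the conjugation action of $\ho S_X$ on $H_1(\ho T_X)$ is intertwined with the natural $\ho S_X$-action on $\Hom_\kk\big(PH_4(X;\kk),(H_2(X;\kk)\otimes H_3(X;\kk))/\im\Delta\big)$, which is precisely the sense of invariance used for (\ref{eqn:ho-tau}) and (\ref{eqn:invariant}).

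I do not expect a serious obstacle. The step that most deserves care, and the only one that genuinely uses input beyond formal manipulation, is the identification $\Gr_A^2\pi_4(X,x_o)_\kk\cong\big(H_2(X;\kk)\otimes H_3(X;\kk)\big)/\im\Delta$ together with the vanishing $A^3\pi_4=0$ — that is, the assertion that the relevant corner of the Adams spectral sequence for a simply connected $X$ with $b_2(X)=1$ degenerates far enough; but this is exactly the content of the preceding proposition, which we may assume. Granting it, the triviality of the $\ho T_X$-action on the two ends, the additivity of $\varphi\mapsto\varphi_\ast-\id$, and the $\ho S_X$-equivariance are all routine and formally identical to the proof of the corollary following Proposition~\ref{prop:exact_seqces}.
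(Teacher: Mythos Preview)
Your proposal is correct and follows exactly the approach the paper intends: the corollary is stated without proof because it is the degree-$4$ instance of the general Johnson homomorphism (\ref{eqn:ho-tau}), obtained by letting $\ho E_X$ act on the short exact sequence of the preceding proposition, precisely as in the corollary following Proposition~\ref{prop:exact_seqces}. Your careful verification of the homomorphism property and $\ho S_X$-equivariance spells out what the paper leaves implicit.
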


\section{Sullivan's Results}
\label{sec:sullivan}

Sullivan \cite[\S13]{sullivan} provides some very useful general tools for understanding mapping class groups of simply connected closed manifolds. They combine results from surgery theory and rational homotopy theory.

Every simply connected finite complex $X$ can be ``localized at 0'' to obtain a space $X_\rat$ and a map $r : X \to X_\rat$ satisfying
\begin{enumerate}

\item the homotopy and reduced integral homology groups of $X_\rat$ are rational vector spaces,

\item the map $r$ induces isomorphisms
$$
r_\ast : \pi_\bdot(X)\otimes \Q \to \pi_\bdot(X_\rat)
\text{ and }
r_\ast : \Htilde_\bdot(X)\otimes \Q \to \Htilde_\bdot(X_\rat;\Z).
$$

\end{enumerate}
The space $X_\rat$ is determined (and is determined by) the Sullivan minimal model of $X$ or, dually, by the minimal DG Lie algebra model of $X$ that one can construct using Chen's power series connections. (See \cite{chen} and \cite{hain}.) Homotopy classes of automorphisms of the model correspond to homotopy classes of self maps of $X_\rat$. As a consequence, we have:

\begin{theorem}[Sullivan {\cite[Thm.~10.3]{sullivan}}]
\label{thm:sullivan_ho}
If $X$ is a simply connected finite complex, then $\ho E X_\rat$ is an affine algebraic $\Q$-group whose reductive quotient is a subquotient of the automorphism group of the rational cohomology ring $H^\bdot(X;\Q)$. Moreover the image of $\ho E_X \to \ho E_{X_\rat}$ is an arithmetic subgroup of $\ho E_{X_\rat}$; the kernel is finite. If, in addition, $X$ is a formal space, then the reductive quotient of $\ho E_{X_\rat}$ is the reductive quotient of the group of automorphisms of the cohomology ring $H^\bdot(X;\Q)$.
\end{theorem}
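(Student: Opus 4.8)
The plan is to model $X_\rat$ by a Sullivan minimal model $\mathcal{M}=(\Lambda V,d)$ over $\Q$, so that homotopy classes of self-maps of $X_\rat$ are identified (contravariantly) with CDGA-homotopy classes of endomorphisms of $\mathcal{M}$, and $G:=\ho E_{X_\rat}$ is the group of units of this monoid. A priori $\Aut\mathcal{M}$ is only a pro-algebraic $\Q$-group, the inverse limit of the affine algebraic $\Q$-groups of automorphisms of the Postnikov truncations $\mathcal{M}(n)=\Lambda(V^{\le n})$. The decisive input is that $X$ is a \emph{finite} complex, say of dimension $N$, so that $H^j(X_\rat;\Q)=0$ for $j>N$: obstruction theory up the rational Postnikov tower then shows that a self-map of $X_\rat$ is determined, up to homotopy, by a bounded amount of data — its effect on the finite-dimensional groups $\pi_j(X)\otimes\Q$ for $j\le N$, together with finitely many further choices each parametrised by a finite-dimensional $\Q$-vector space. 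First I would assemble this bookkeeping into a presentation of $G$ as a genuine, finite-dimensional affine algebraic $\Q$-group, checking in particular that composition is polynomial in the resulting coordinates.

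Next I would analyse the morphism $\rho\colon G\to\Aut H^\bdot(X;\Q)$ induced by cohomology. Its image $G'$ is closed, being the image of a morphism of algebraic groups. Its kernel $U$ is the group of self-equivalences of $X_\rat$ acting trivially on $H^\bdot$; the claim is that $U$ is unipotent. This is Sullivan's key observation: working with the finitely generated truncated model, an automorphism trivial on cohomology differs from the identity only by higher-order corrections lying in spaces of cocycles, and these compose nilpotently since the model has finite formal dimension, so $U$ is a unipotent $\Q$-group — the same fact that exhibits the rational homotopy Torelli group as a lattice in a unipotent group. Granting this, $U$ is normal and unipotent with $G/U\cong G'$, whence $R_u(G)/U=R_u(G')$ and
$$
G^{\mathrm{red}}=G/R_u(G)\;\cong\;G'/R_u(G')={G'}^{\mathrm{red}},
$$
which is a subquotient of $\Aut H^\bdot(X;\Q)$.

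In the formal case one gets more. Then $\mathcal{M}$ may be taken to be the minimal model of the CDGA $(H^\bdot(X;\Q),0)$, so any graded-ring automorphism $\psi$ of $H^\bdot(X;\Q)$ is an automorphism of the CDGA $(H^\bdot(X;\Q),0)$ and, by the lifting property of minimal models, lifts to an automorphism of $\mathcal{M}$ over $\psi$, unique up to homotopy; its class $[\psi]\in G$ satisfies $\rho([\psi])=\psi$. Hence $\rho$ is surjective, $G'=\Aut H^\bdot(X;\Q)$, and the identity of the previous paragraph shows that $G^{\mathrm{red}}$ is the reductive quotient of $\Aut H^\bdot(X;\Q)$.

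Finally, for the comparison $\ho E_X\to\ho E_{X_\rat}=G(\Q)$: an honest self-equivalence of $X$ respects the integral structure underlying the coordinates of $G$ — the ring $H^\bdot(X;\Z)$ and the groups $\pi_\bdot(X)$ modulo torsion, i.e.\ a system of lattices in the relevant $\Q$-vector spaces — so its image lands in their common stabiliser, an arithmetic subgroup $\Gamma$ of $G$. Conversely, an element of $\Gamma$ is a rational self-equivalence compatible with all of this integral data, and the obstructions to realising it by a self-equivalence of $X$ itself are torsion classes in a finite list of finitely generated abelian groups; so $\ho E_X\to G(\Q)$ has image of finite index in $\Gamma$, hence arithmetic, and finite kernel, the kernel being measured by torsion in the same groups. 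This last step — importing the finiteness theorems of localisation theory and surgery that govern the passage between a finite complex and its rationalisation — is the one genuinely non-formal ingredient and the step I expect to be the main obstacle; once the finite-dimensionality forced by $\dim X=N$ is in place, the algebraic-group structure and the reductive-quotient computations are essentially formal.
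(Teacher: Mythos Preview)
The paper does not prove this theorem; it is stated as a result of Sullivan, cited from \cite[Thm.~10.3]{sullivan}, and used as a black box in what follows. There is therefore no proof in the paper to compare your proposal against.

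That said, your outline is a reasonable sketch of Sullivan's original argument: model $X_\rat$ by its minimal CDGA (or, as the paper itself prefers in Section~\ref{sec:sullivan} and its sequel, the dual finitely generated free DG Lie algebra), observe that finiteness of $X$ forces the automorphism group modulo homotopy to be cut out by finitely many polynomial conditions, identify the kernel of $\rho$ with a unipotent group via the filtration by word length, and invoke localisation/arithmetic-square arguments for the arithmeticity and finite-kernel statements. One small caveat on your bookkeeping: the bound ``$j\le N=\dim X$'' on the relevant $\pi_j(X)\otimes\Q$ is not quite the right finiteness statement, since the rational homotopy groups of a finite complex need not vanish above its dimension (e.g.\ $\pi_\bdot(S^2)\otimes\Q$). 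What actually makes $G$ finite-dimensional is that the \emph{Lie algebra} model $\LL_X$ is free on finitely many generators --- one for each rational cell --- so that an automorphism is determined by the images of those generators; equivalently, on the CDGA side, an automorphism of the minimal model is determined by what it does on a truncation once one knows it is compatible with the (finitely many) cohomology relations. This is a matter of packaging rather than a genuine gap in your strategy.
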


Denote the group of homotopy classes of homotopy equivalences of $M_\rat$ that fix the fundamental cohomology class of $M$ and all the Pontryagin classes $p_k(M) \in H^\bdot(M;\Q)$ by $\ho E_{M_\rat}^\pp$. It is an algebraic subgroup of $\Aut H^\bdot(X;\Q)$.

\begin{theorem}[Sullivan {\cite[\S13]{sullivan}}]
\label{thm:sullivan_diff}
If $M$ is a simply connected smooth closed manifold of real dimension $\ge 5$, then there is an affine algebraic $\Q$-group $G_M$ that is an extension
$$
0 \to D_M \to G_M \to \ho E_{M_\rat}^\pp \to 1
$$
where $D_M$ is the distortion group. There is homomorphism
$$
\rho : \G_M \to G_M(\Q)
$$
whose image is arithmetic (and so Zariski dense) and whose kernel is finite. When $M$ is formal, the reductive quotient of $G_M$ is the reductive quotient of the group of automorphisms of the ring $H^\bdot(M;\Q)$ that fix the Pontryagin classes of $M$.
\end{theorem}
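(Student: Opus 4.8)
The plan is to reconstruct Sullivan's argument, which combines simply connected surgery theory (available because $\dim_\R M \ge 5$) with the rational homotopy theory of Theorem~\ref{thm:sullivan_ho} and the distortion construction of Proposition~\ref{prop:distortion}. I would first build $G_M$. By Theorem~\ref{thm:sullivan_ho} the group $\ho E_{M_\rat}$ is affine algebraic over $\Q$ and acts algebraically on $H^\bdot(M;\Q)$; the subgroup $\ho E_{M_\rat}^\pp$ is the stabilizer of the fundamental class together with the finitely many Pontryagin classes $p_k(M)$, hence a closed, and so affine algebraic, $\Q$-subgroup. The distortion group $D_M$ is a finite-dimensional $\Q$-vector space carrying an action of $\ho E_{M_\rat}^\pp$ that extends the $S_M$-action of Proposition~\ref{prop:distortion}; this action, and the fact that it is algebraic, comes out of the surgery analysis below, which exhibits $D_M$ as a cokernel in the surgery exact sequence. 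One then lets $G_M$ be the resulting extension of $\ho E_{M_\rat}^\pp$ by the vector group $D_M$: an extension of an affine algebraic group by a vector group is again affine algebraic, and $D_M$ lies in its unipotent radical.

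Next I would construct $\rho$. The composite $\G_M \to \ho E_M \to \ho E_{M_\rat}$ lands in $\ho E_{M_\rat}^\pp$, since a diffeomorphism fixes the orientation and all Pontryagin classes. To lift it to $G_M(\Q)$ one feeds in the distortion: because $\ho E_M \to \ho E_{M_\rat}$ has finite kernel, the kernel $K := \ker\{\G_M \to \ho E_M\}$ agrees up to finite groups with $\ker\{\G_M \to \ho E_{M_\rat}^\pp\}$, and on $K$ one has the homomorphism $\delta : K \to D_M$ of Proposition~\ref{prop:distortion}, which by the equivariance $\delta(\psi\varphi\psi^{-1}) = \psi\cdot\delta(\varphi)$ is compatible with the $\G_M$-action on $D_M$. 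That these two pieces of data assemble into a single homomorphism $\rho : \G_M \to G_M(\Q)$ --- in particular that the extension defining $G_M$ is the correct one --- is exactly what the surgery exact sequence supplies: for simply connected $M$ of dimension $\ge 5$, Cerf's pseudoisotopy theorem lets one replace $\G_M$ by the block mapping class group up to a finite discrepancy, and the surgery exact sequence then expresses the latter through $\ho E_M$, the normal invariants $[M,G/O]$, and the surgery obstruction groups $L_\bdot(\Z)$. The distortion is precisely the secondary invariant produced on $K$, and the indeterminacy group $I$ of Section~\ref{sec:distortion} is the image of the associated surgery obstruction map, so that $D_M$ is its rational cokernel.

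The two remaining claims are then read off the same sequence after tensoring with $\Q$. Rationally $G/O \simeq \prod_{k\ge 1} K(\Q,4k)$, so $[M,G/O]\otimes\Q \cong \bigoplus_k H^{4k}(M;\Q)$ is detected by Pontryagin classes, $[\Sigma M_+,G/O]\otimes\Q \cong \bigoplus_k H^{4k-1}(M;\Q)$, and $L_n(\Z)\otimes\Q$ is $\Q$ for $n\equiv 0\bmod 4$ and $0$ otherwise. Hence the normal invariant of a self-homotopy-equivalence that fixes the Pontryagin classes is rationally trivial, so such an equivalence differs from a diffeomorphism by an element of a finitely generated group; therefore the image of $\G_M$ in $\ho E_{M_\rat}^\pp$ has finite index in the arithmetic subgroup coming from $\ho E_M$, and is arithmetic. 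Likewise $\bigoplus_k H^{4k-1}(M;\Z)$ maps onto a full lattice in $D_M$, so $\delta(K)$ is a lattice; together these show $\rho(\G_M)$ is arithmetic, hence Zariski dense. For the kernel, $\rho(\varphi)=1$ forces $\varphi$ to be rationally homotopically trivial (so it lies in $K$ up to a finite group) and to have vanishing distortion; since distortion is the complete rational invariant of $K$, such a $\varphi$ has finite order. When $M$ is formal, Theorem~\ref{thm:sullivan_ho} identifies the reductive quotient of $\ho E_{M_\rat}$ with that of $\Aut H^\bdot(M;\Q)$; imposing the algebraic conditions of fixing the orientation and the Pontryagin classes identifies the reductive quotient of $\ho E_{M_\rat}^\pp$ with that of the group of ring automorphisms of $H^\bdot(M;\Q)$ fixing the Pontryagin classes, and since $D_M$ is unipotent this is also the reductive quotient of $G_M$.

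The main obstacle is the surgery bookkeeping in the middle step: organizing the surgery exact sequence so that its terms match up rationally with $\ho E_{M_\rat}^\pp$ and $D_M$ --- above all, checking that the indeterminacy group $I$ of Section~\ref{sec:distortion} is precisely the image of the surgery obstruction map $L_{\bdot+1}(\Z) \to [\Sigma M_+, G/O]$ --- and then tracking the finite ambiguities (the finite kernel of $\ho E_M \to \ho E_{M_\rat}$, the torsion in the $L$-groups and in the normal invariants, and the discrepancy introduced by Cerf's theorem) carefully enough to see that they all land in the asserted finite kernel of $\rho$. Once this dictionary between the distortion picture and the surgery sequence is in place, the rest is formal.
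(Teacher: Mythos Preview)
The paper does not prove this theorem; it is stated as a result of Sullivan \cite[\S13]{sullivan} and used as a black box, so there is no ``paper's own proof'' to compare against. Your proposal is therefore not competing with anything in the paper itself --- rather, it is an attempt to reconstruct Sullivan's original argument.

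As such a reconstruction, your outline is broadly on the right track: the ingredients are indeed Sullivan's arithmeticity theorem for $\ho E_M$ (Theorem~\ref{thm:sullivan_ho}), simply connected surgery in dimension $\ge 5$, the rational identification of $G/O$ with a product of Eilenberg--Mac\,Lane spaces, and the distortion homomorphism as the secondary invariant on $\ker\{\G_M\to\ho E_M\}$. The places that remain genuinely delicate are exactly the ones you flag at the end: producing the extension $G_M$ as an honest algebraic $\Q$-group (not just a set-theoretic extension) requires exhibiting an algebraic action of $\ho E_{M_\rat}^\pp$ on $D_M$ and an algebraic $2$-cocycle, neither of which you have actually constructed; and the identification of the indeterminacy subgroup $I$ with the image of the relevant map in the surgery sequence is asserted rather than verified. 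You also need Cerf's theorem (pseudoisotopy implies isotopy) to pass between isotopy and block-diffeomorphism, and you should be explicit that the ``finite discrepancies'' you are collecting really are finite (e.g.\ that the torsion in $[M,G/O]$ and in $L_\bdot(\Z)$ is finite and that the smooth structure set contributes only finitely). These are exactly the points where Sullivan's paper does the work, so if the goal were to include a self-contained proof you would need to fill them in; for the purposes of this paper, however, the theorem is simply quoted.
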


Denote the kernel of $G_M \to \Aut H^\bdot(M;\Q)$ by $U_M$. This may be a proper subgroup of $G_M$.

\begin{corollary}
The homomorphism $\G_M \to G_M(\Q)$ induces a Zariski dense homomorphism
$$
T_M \to U_M(\Q)
$$
with finite kernel, which implies that the induced map $T_M^\un \to U_M$ is an isomorphism of the unipotent completion of $T_M$ with $U_M$.
\end{corollary}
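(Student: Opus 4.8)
The plan is to establish the three asserted properties of the restriction of $\rho$ to $T_M$ — that it lands in $U_M(\Q)$, has finite kernel, and has Zariski-dense image — and then to deduce the statement about unipotent completions from Mal'cev's theory. Two of these are immediate. By construction the composite $\G_M \to G_M(\Q) \to \Aut H^\bdot(M;\Q)$ is the tautological action of $\G_M$ on cohomology (this compatibility is part of Theorem~\ref{thm:sullivan_diff}); since $T_M$ acts trivially on $H^\bdot(M;\Q)$ we get $\rho(T_M)\subseteq U_M(\Q)$, so $\rho$ restricts to a homomorphism $T_M \to U_M(\Q)$, and its kernel $T_M\cap\ker\rho$ is a subgroup of the finite group $\ker\rho$, hence finite. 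The real content is Zariski density.

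First, $U_M$ is a unipotent $\Q$-group. This comes out of the structure of $G_M$ in Theorem~\ref{thm:sullivan_diff}: $U_M$ is an extension of a subgroup of the homotopy Torelli group $\ho T_{M_\rat}$ by the vector group $D_M$, and $\ho T_{M_\rat}$ acts trivially on the associated graded of the Adams filtration of $\pi_\bdot(M)\otimes\Q$ (Section~\ref{sec:johnson}), hence by unipotent operators on $\pi_\bdot(M)\otimes\Q$, on which it acts faithfully, so $\ho T_{M_\rat}$ is unipotent by Kolchin's theorem. Next, $U_M$ is a normal $\Q$-subgroup of $G_M$, being the kernel of a $\Q$-morphism, and $\rho(\G_M)$ is an arithmetic subgroup of $G_M$; by the standard fact that the intersection of an arithmetic subgroup with a normal $\Q$-subgroup is arithmetic in that subgroup, $\rho(\G_M)\cap U_M(\Q)$ is arithmetic in $U_M$. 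The inclusion $\rho(T_M)\subseteq\rho(\G_M)\cap U_M(\Q)$ has finite index, because $\rho(\G_M)\cap U_M(\Q)=\rho(T'_M)$, where $T'_M\leq\G_M$ is the subgroup of mapping classes acting trivially on $H_\bdot(M;\Q)$, and $T'_M/T_M$ embeds into the finite automorphism group of the torsion subgroup of $H_\bdot(M;\Z)$. Since a finite-index subgroup of an arithmetic group is arithmetic, $\rho(T_M)$ is arithmetic in $U_M$; as $U_M$ is unipotent and $U_M(\R)$ is connected, it is then a Zariski-dense (cocompact) lattice. This proves the Zariski density.

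It remains to deduce $T_M^\un\cong U_M$. The $\Q$-points of a pro-unipotent $\Q$-group are torsion free, so every homomorphism from $T_M$ to such a group annihilates the finite normal subgroup $T_M\cap\ker\rho$ and hence factors through $\rho(T_M)$; by the universal property of unipotent completion the induced map $T_M^\un\to\rho(T_M)^\un$ is an isomorphism. On the other hand $\rho(T_M)$ is a finitely generated torsion-free nilpotent group embedded in $U_M(\Q)$ as a Zariski-dense lattice in $U_M(\R)$, so by uniqueness of the Mal'cev (unipotent) completion its unipotent completion is $U_M$. Composing the two identifications gives $T_M^\un\cong U_M$.

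The main obstacle is the Zariski-density step. It rests on two external inputs: that $U_M$ is unipotent, so that its arithmetic subgroups are automatically Zariski dense — which one reads off from Sullivan's description of $G_M$ together with the triviality of the homotopy-Torelli action on the Adams graded; and the arithmetic-groups theorem that $\rho(\G_M)\cap U_M(\Q)$ is arithmetic in $U_M$ — after which only the elementary bookkeeping that $\rho(T_M)$ has finite index in this intersection remains. The factorization of $\rho$ through the cohomology action, the finiteness of the kernel, and the Mal'cev argument are all routine.
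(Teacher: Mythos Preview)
The paper gives no proof of this corollary; it is stated as an immediate consequence of Sullivan's Theorem~\ref{thm:sullivan_diff} (and the introduction already records that for such $M$, ``$T_M$ mod a finite group is a lattice in a unipotent $\Q$-group''). Your argument is a correct and careful unpacking of why this follows: $\rho(T_M)$ lands in $U_M(\Q)$ with finite kernel; $\rho(T_M)$ has finite index in the arithmetic group $\rho(\G_M)\cap U_M(\Q)$ (your bookkeeping with $T'_M$ is fine, since automorphisms of a finitely generated abelian group acting trivially after $\otimes\,\Q$ form a finite group); arithmetic subgroups of unipotent $\Q$-groups are Zariski-dense lattices; and Mal'cev theory gives $T_M^\un\cong U_M$.

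There is, however, one genuine gap. To show $U_M$ is unipotent you assert that $\ho T_{M_\rat}$ acts \emph{faithfully} on $\pi_\bdot(M)\otimes\Q$ and then invoke Kolchin. This faithfulness is not true in general: the paper's own Problem list (Section~\ref{sec:future}) asks one to understand the kernel of $\ho E_X \to \Aut\pi_\bdot(X,x_o)_\Q$ for formal $X$, noting that Sullivan's results imply it is a lattice in a (possibly nontrivial) unipotent group. So elements of $\ho T_{M_\rat}$ can act trivially on $\pi_\bdot(M)_\Q$ without being trivial. The repair is easy and stays within your framework: your Adams-filtration argument already shows the \emph{image} of $\ho T_{M_\rat}$ in $\Aut\pi_\bdot(M)_\Q$ is unipotent; the kernel consists of homotopy classes of automorphisms of the minimal model acting trivially on its indecomposables, and these form a unipotent group (``identity plus higher word-length'' on a finite-dimensional model). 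Hence $\ho T_{M_\rat}$ is an extension of unipotent by unipotent, so unipotent, and then so is its extension $U_M$ by the vector group $D_M$. Alternatively, one may simply cite this as part of Sullivan's structural description of $G_M$, which is what the paper implicitly does.
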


Proposition~\ref{prop:distortion} implies that $D_M$ is central in $U_M$ and that the inclusion $D_M \hookrightarrow U_M$ induces an inclusion
\begin{equation}
\label{eqn:distortion_subgp}
D_M/(D_M\cap U_M') \hookrightarrow H_1(U_M) \cong H_1(T_M;\Q)
\end{equation}
where $U_M'$ denotes the commutator subgroup of $U_M$.

\begin{remark}
\label{rem:rel_comp}
Theorem~\ref{thm:sullivan_diff} can be restated in terms of relative unipotent completion, as defined in \cite{hain:rel_comp}. It says that $G_M$ is a quotient of the completion of $\G_M$ relative to the homomorphism from $\G_M$ onto its Zariski closure in $\Aut H^\bdot(X;\Q)$.
\end{remark}

One very basic example which will be used later is that of projective space.

\begin{example}
The Sullivan minimal model of (complex) projective space $\P^m$ is the commutative DGA
$$
M^\bdot_{\P^m} = \Q[a]\otimes \Lambda^\bdot(b)\quad db = a^{m+1}
$$
where $a$ has degree 2 and $b$ degree $2m+1$. The automorphism group is the multiplicative group $\Gm/\Q$ where $t\in \Gm(\Q)$ acts by $a\mapsto ta$ and $b \mapsto t^{m+1}b$. So $\ho E_{\P^m}^+ \cong \Gm$. The maximal arithmetical subgroup of $\Gm(\Q)$ that acts on $H^\bdot(\P^m;\Z)$ is the trivial group when $m$ is odd and $\{\pm 1\}$ when $M$ is even. The distortion group is trivial as there is no odd cohomology, so Sullivan's theorem implies that the mapping class group of $\P^m$ is finite. When $m$ is even, complex conjugation is orientation preserving and induces the non-trivial automorphism of the rational homotopy type.
\end{example}

\section{Cohomology Automorphisms of a Compact K\"ahler Manifold}
\label{sec:coho_autos}

The group of automorphisms of the rational cohomology ring of a compact oriented manifold that fix the Pontryagin classes can have a non-trivial unipotent radical. One simple example (pointed out to me by Alexander Berglund) is the unitary group $\mathrm{U}(9)$. It has trivial tangent bundle, and thus trivial Pontryagin classes. Its cohomology ring is an exterior algebra on generators $y_1,y_3,\dots,y_9$, where $y_j$ has degree $j$. The reductive quotient of its automorphism group is $\Gm$; a non-trivial unipotent automorphism takes $y_9$ to $y_1y_3y_5$.

Here we prove that the group of automorphisms of the cohomology ring of a compact K\"ahler manifold that fix the K\"ahler form is reductive, as is its subgroup that also fixes its Pontryagin classes.

\begin{proposition}
Suppose that $\kk$ is a subfield of $\R$. If $M$ is a compact K\"ahler manifold with K\"ahler class $\w \in H^2(M;\kk)$, then the automorphism group of its cohomology ring that fixes $\w$ is a reductive $\kk$ group.
\end{proposition}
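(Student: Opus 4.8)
The plan is to produce, after extending scalars to $\R$, a positive-definite symmetric bilinear form $Q$ on $V := H^\bdot(M;\R)$ with respect to which $G$ is stable under the adjoint anti-involution $\phi\mapsto\phi^\ast$; a subgroup of $GL(V)$ over a field of characteristic $0$ with this property is reductive. Indeed, if $W\subseteq V$ is a $G$-submodule then so is its $Q$-orthogonal complement $W^\perp$ — because $Q(\phi w',w)=Q(w',\phi^\ast w)$ and $\phi^\ast w\in W$ — and definiteness of $Q$ gives $V=W\oplus W^\perp$; so $V$ is a semisimple $G$-module, and since $G\hookrightarrow GL(V)$ is faithful this forces $G$ reductive. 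As reductivity of $G$ over $\kk$ is equivalent to reductivity of $G\times_\kk\R$, which is the corresponding group of graded ring automorphisms of $H^\bdot(M;\R)$ fixing $\w$, I may assume $\kk=\R$; write $n=\dim_\C M$.

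The form $Q$ will be assembled from two pieces of Hodge theory. First, Poincar\'e duality and the Hard Lefschetz theorem furnish a non-degenerate symmetric bilinear form $B$ on $V$ built only from the cup product and the class $\w$: on $H^k$ with $k\le n$ one takes $B(x,y)=(-1)^{k(k-1)/2}\int_M x\cup y\cup\w^{n-k}$, on $H^k$ with $k>n$ one transports this along the Hard Lefschetz isomorphism $L^{k-n}\colon H^{2n-k}\xrightarrow{\sim}H^k$ with $L=\w\cup(-)$, and distinct degrees are declared $B$-orthogonal. Every $\phi\in G$ preserves the cup product, fixes $\w$, and fixes the generator $\w^n$ of $H^{2n}$, hence preserves $B$. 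Second, the Weil operator $C$ of the Hodge structure on $H^\bdot(M;\C)$, which acts on $H^{p,q}(M)$ by the scalar $i^{p-q}$, is \emph{itself} a graded ring automorphism fixing $\w$: it is graded with $C^4=\id$; it is multiplicative because the cup product respects the Hodge bigrading, $H^{p,q}\cdot H^{r,s}\subseteq H^{p+r,q+s}$; and it fixes $\w$ since $\w$ has type $(1,1)$, so $C$ acts on it by $i^0=1$. As $C$ commutes with complex conjugation it restricts to an element of $G$. The polarized Hodge--Riemann bilinear relations associated to the K\"ahler class $\w$ then assert precisely that $Q(x,y):=B(Cx,y)$ is a positive-definite symmetric form on $V$.

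It remains to identify the $Q$-adjoint of $\phi\in G$. For all $x,y\in V$,
$$
B(C\phi^\ast x,\,y)=Q(\phi^\ast x,y)=Q(x,\phi y)=B(Cx,\phi y)=B(\phi^{-1}Cx,\,y),
$$
the last equality by $\phi$-invariance of $B$; since $B$ is non-degenerate this forces $\phi^\ast=C^{-1}\phi^{-1}C$. Because $C\in G$, conjugation by $C$ is an automorphism of the algebraic group $G$, so $\phi^\ast\in G$ for every $\phi\in G$, and the criterion of the first paragraph finishes the argument.

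I expect the only substantive step to be the Hodge-theoretic core of the second paragraph: recognizing the Weil operator as a ring automorphism fixing $\w$ — which is exactly what turns $\phi\mapsto C^{-1}\phi^{-1}C$ into an honest anti-involution preserving $G$ — and invoking the polarized Hodge--Riemann relations to get the positive-definite form. These are precisely the structures missing from the non-K\"ahler example $\mathrm{U}(9)$. The reduction to $\R$, the construction of $B$, and the reductivity criterion are all routine.
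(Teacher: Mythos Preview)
Your overall strategy---show that $G$ is stable under the adjoint with respect to a positive definite form, using that the Weil operator $C$ lies in $G$---is sound and is essentially the group-level version of the paper's Lie-algebra argument (the paper shows $\g$ carries a polarized weight-$0$ Hodge structure and extracts a compact real form; your Cartan involution $\phi\mapsto C^{-1}\phi^{-1}C$ is exactly $\Ad(C)$ on $\g$). The observation that $C$ is a ring automorphism fixing $\w$ is the right key point.

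There is, however, a genuine gap: the form $Q(x,y)=B(Cx,y)$ is \emph{not} positive definite on all of $H^\bdot(M;\R)$. The Hodge--Riemann relations only assert positivity on the \emph{primitive} cohomology $H^k_o(M)$. On the Lefschetz piece $L^j H^{k-2j}_o(M)\subset H^k(M)$ your form satisfies
\[
Q(L^j\alpha,L^j\alpha)=(-1)^{k(k-1)/2}\int_M (C\alpha)\,\alpha\,\w^{n-k+2j}
=(-1)^j\cdot\Bigl[(-1)^{(k-2j)(k-2j-1)/2}\int_M (C\alpha)\,\alpha\,\w^{n-(k-2j)}\Bigr],
\]
so the sign alternates with $j$. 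Already for $M=\P^2$ one has $H^2=\R\w=L\cdot H^0_o$, and your $Q(\w,\w)=-\int_M\w^2<0$; on a K3 surface $Q$ is negative on $\R\w\subset H^2$ while positive on the primitive $21$-plane.

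The fix is immediate once you notice one more consequence of $\phi\in G$ fixing $\w$: since $\phi$ commutes with $L$ and preserves the grading, it preserves $\ker\big(L^{n-k+1}|_{H^k}\big)=H^k_o(M)$ and hence the entire Lefschetz decomposition. So you may redefine $B$ piecewise by inserting the sign $(-1)^j$ on $L^jH^{k-2j}_o$ (equivalently, transport the primitive polarizations along $L^j$); this corrected $B$ is still $G$-invariant, and now $Q=B(C\,\cdot,\cdot)$ is genuinely positive definite by Hodge--Riemann. Your adjoint computation $\phi^\ast=C^{-1}\phi^{-1}C\in G$ then goes through unchanged. This is exactly why the paper's proof passes through primitive cohomology (embedding $\g$ in $\End H^\bdot_o(M)$) rather than attempting to polarize $H^\bdot(M)$ directly.
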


Denote $\sL_2(\kk)$ by $\sL_2$. This has basis $\ee$, $\ff$ and $\hh$, where $[\hh,\ee] = 2\ee$, $[\hh,\ff] = -2\ff$ and $[\ee,\ff] = \hh$.

\begin{proof}
Denote the complex dimension of $M$ by $n$ and $H^\bdot(M;\kk)$ by $H^\bdot(M)$. The choice of a K\"ahler class $\w$ determines a unique action of $\sL_2$ on $H^\bdot(M)$, where $\ee$ acts as multiplication by $\w$ and $\hh$ acts on $H^{n+j}(M)$ by multiplication by $j$. The primitive cohomology $H^\bdot_o(M)$ is, by definition, the space of lowest weight vectors $\ker \ff$. The restriction of the pairing
$$
\bil : H^{n-j}(M)\otimes H^{n-j}(M) \to \kk,\quad \alpha\otimes \beta \mapsto \int_M \w^j \wedge \alpha\wedge \beta
$$
to $H_o^{n-j}(M)$ is (up to a sign) a polarization.

Denote the subgroup of the automorphism group of $H^\bdot(M)$ that fixes $\w$ by $G$ and its Lie algebra by $\g$. It consists of the (degree 0) derivations of $H^\bdot(M)$ that annihilate $\w$. Every $\delta\in \g$ commutes with the $\sL_2$ action and thus acts on the primitive cohomology and preserves the primitive decomposition
$$
H^m(M) = \bigoplus_{\ell\ge 0} \w^\ell \wedge H^{m-2\ell}_o(M)
$$
and the projections $H^m(M) \to H^{m-2\ell}_o(M)$.
Consequently, $\delta$ is determined by an endomorphism (of degree $\le 0$) of $H^\bdot_o(M)$. It follows that $\g$ is the kernel of the morphism
$$
\End^{\le 0} H_o^\bdot(M) \to H^2(M) \oplus \Hom(H^\bdot(M)^{\otimes 2},H^\bdot(M))
$$
that takes $\delta$ to $\big(\delta(\w),\mu\circ(\delta\otimes \id + \id\otimes \delta) - \delta\circ \mu\big)$. It follows that $\g$ is a polarized Hodge structure of weight 0 who polarization is invariant under the adjoint action. Standard arguments then imply that $\g$ has a compact real form, which implies that $G$ is reductive. We briefly recall the argument. If we set
$$
\p = \g_\R \cap \bigoplus_{p \text{ even}} \g^{p,-p} \text{ and } \k = \g_\R \cap \bigoplus_{p \text{ odd}} \g^{p,-p},
$$
then $\k$ and $\p$ are real Hodge substructures of $\g_\R$, $\g_\R = \k \oplus \p$ and
$$
[\k,\k]\subseteq \k,\ [\k,\p]\subseteq \p \text{ and } [\p,\p]\subseteq \k.
$$
The restriction of the polarization to $\k$ is positive definite and to $\p$ is negative definite. This implies that $\k \oplus i\p$ is the Lie algebra of a compact real form of $G(\C)$.
\end{proof}

If $V$ is a Hodge substructure of $H^\bdot(M)$, then Lie algebra
$$
\g_V := \{\delta \in \g : \delta|_V = 0\}
$$
is a polarized Hodge sub-structure of $\g$ and thus also has a compact form. Since the Pontryagin classes of $M$ are polynomials in the Chern classes of $M$, they are Hodge classes. Taking $V$ to be the span of the $p_j(M)$, we conclude:

\begin{corollary}
If $M$ is as in the proposition, then the group of automorphisms of its cohomology ring $H^\bdot(M;\kk)$ that fix the K\"ahler class $\w$ and the rational Pontryagin classes of $M$ is a reductive $\kk$ group.
\end{corollary}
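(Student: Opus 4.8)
The plan is to deduce the corollary directly from the Proposition together with the observation stated just above it, since the group in question is the subgroup of $G$ cut out by fixing the Pontryagin classes, and those classes span a Hodge substructure of $H^\bdot(M;\kk)$.

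First I would record that the rational Pontryagin classes $p_j(M) \in H^{4j}(M;\Q)$ are Hodge classes. They are universal polynomials in the Chern classes of the holomorphic tangent bundle, via $1 - p_1 + p_2 - \cdots = c(M)c(M)^\vee$, and on a compact K\"ahler manifold each $c_j(M)$ lies in $H^{j,j}(M)\cap H^{2j}(M;\Q)$. Hence $V := \Span_\kk\{p_1(M),p_2(M),\dots\}$ is a Hodge substructure of $H^\bdot(M;\kk)$ --- a direct sum of trivial Hodge structures supported in degrees divisible by $4$. (Adjoining $\w$ to $V$ changes nothing, since $\delta(\w) = 0$ is already built into $\g$.)

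Next I would identify the Lie algebra of the group $G^\pp$ of automorphisms of $H^\bdot(M;\kk)$ fixing $\w$ and every $p_j(M)$ with the algebra $\g_V = \{\delta\in\g : \delta|_V = 0\}$ of the observation: a one-parameter subgroup of $G$ fixes each $p_j(M)$ precisely when its infinitesimal generator $\delta$ annihilates $p_j(M)$, and since $V$ is spanned by the $p_j(M)$ this is the condition $\delta|_V = 0$. By the observation, $\g_V$ is a polarized Hodge substructure of $\g$; moreover the polarization remains invariant under the adjoint action, because the identity $\langle[\delta,x],y\rangle + \langle x,[\delta,y]\rangle = 0$ holds for all $\delta\in\g$ and hence, a fortiori, for $\delta\in\g_V$. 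Therefore the compact-form argument of the Proposition --- decomposing $\g_{V,\R}$ into its even and odd Hodge parts $\p$ and $\k$ and invoking the positivity of the polarization as there --- applies verbatim to exhibit $\k\oplus i\p$ as a compact real form of $G^\pp(\C)$. Hence $G^\pp$ is reductive.

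There is no serious obstacle here: the real work was done in the Proposition and in the preceding observation. The one point that merits a sentence of care is why $\g_V$ is a Hodge substructure of $\g$, and this is because the evaluation map $\g \to \bigoplus_m \Hom\big(V\cap H^m(M),\, H^m(M)\big)$, $\delta\mapsto\delta|_V$, is a morphism of weight-$0$ Hodge structures (derivations preserve cohomological degree, and $V$ is a Hodge substructure), so its kernel $\g_V$ inherits the polarized Hodge structure of $\g$.
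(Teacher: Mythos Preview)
Your proof is correct and follows essentially the same approach as the paper: observe that the Pontryagin classes are Hodge classes (being polynomials in the Chern classes), take $V$ to be their span, and apply the observation that $\g_V$ is a polarized Hodge substructure of $\g$ with a compact real form. You have simply spelled out a few more details --- the explicit reason $\g_V$ is a Hodge substructure via the evaluation map, and the adjoint invariance of the polarization --- than the paper does.
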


The appropriate mapping class to consider when studying moduli of polarized K\"ahler or projective manifolds $(M,\w)$ should be the subgroup $\G_{M,\w}$ of $\G_M$ that fixes $\w$. Combining the corollary with Sullivan's Theorem~\ref{thm:sullivan_diff}, we obtain:

\begin{theorem}
\label{thm:kahler_coho}
Suppose that $\kk$ is a subfield of $\R$. If $(M,\w)$ is a compact K\"ahler manifold of dimension $\ge 3$ with $\w\in H^2(M;\kk)$, then the group $G$ of ring automorphisms of $H^\bdot(M;\R)$ that fix the K\"ahler class and the Pontryagin classes is a reductive $\kk$ group and the quotient $S_{M,\w}$ of $\G_{M,\w}$ by the Torelli group $T_M$ is a lattice in $G(\R)$.
\end{theorem}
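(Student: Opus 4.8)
The plan is to deduce the first assertion directly from Section~\ref{sec:coho_autos} and the lattice statement by restricting Sullivan's Theorem~\ref{thm:sullivan_diff} to the subgroup fixing $\w$. The first assertion is immediate: since $\w$ is a K\"ahler class and $\kk\subseteq\R$, the corollary to the proposition in Section~\ref{sec:coho_autos} says that the group $G$ of automorphisms of the cohomology ring $H^\bdot(M;\kk)$ fixing $\w$ and the rational Pontryagin classes is a reductive $\kk$-group, with Lie algebra a sum of polarized Hodge structures; base-changing to $\R$ gives the group in the statement.

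For the lattice statement, first note that $\dim_\R M\ge 6\ge 5$, so Theorem~\ref{thm:sullivan_diff} applies, and since a compact K\"ahler manifold is formal \cite{dgms} the reductive quotient of the $\Q$-group $G_M$ is the reductive quotient of the group $\cA$ of ring automorphisms of $H^\bdot(M;\Q)$ that fix the Pontryagin classes. Because $M$ is formal, $\cA$ is a retract of $\ho E_{M_\rat}$, so $G_M\to\Aut H^\bdot(M;\Q)$ factors through $\cA$ with unipotent kernel; as Sullivan's $\rho\colon\G_M\to G_M(\Q)$ has finite kernel and Zariski dense image, the image $S_M$ of $\G_M$ in $\Aut H^\bdot(M;\Q)$ is an arithmetic subgroup of $\cA$.

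Next, restrict to the stabilizer of $\w$. The group $\G_{M,\w}$ is the preimage under $\rho$ of the stabilizer of $\w$ in $G_M$ (the distortion subgroup $D_M$ acts trivially on cohomology, so it lies in this preimage), and $\rho$ restricts to a homomorphism from $\G_{M,\w}$ into the $\kk$-points of this stabilizer $G_{M,\w}$, again with finite kernel. The Torelli group $T_M$ maps with finite kernel into the unipotent kernel of $G_{M,\w}\to\Aut H^\bdot(M)$, and the reductive quotient of $G_{M,\w}$ is the stabilizer of $\w$ in the reductive quotient of $G_M$, which by Section~\ref{sec:coho_autos} is the reductive group $G$ itself. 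Composing $\rho$ with the projection onto this reductive quotient exhibits $S_{M,\w}=\G_{M,\w}/T_M$ as an arithmetic subgroup of $G$. Finally, an arithmetic subgroup of $G$ is a lattice in $G(\R)$ provided $G$ has no nontrivial $\kk$-rational character; this holds because, by the Hodge-theoretic description of $\mathrm{Lie}(G)$ in Section~\ref{sec:coho_autos}, $G(\R)$ preserves the positive-definite forms supplied by the polarizations of primitive cohomology, so the central torus of $G$ is $\R$-anisotropic. The Borel--Harish-Chandra finiteness theorem then gives that $S_{M,\w}$ is a lattice in $G(\R)$.

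I expect the main obstacle to be the restriction to the $\w$-stabilizer. Since $\w$ need not be a rational class, $G$ is genuinely only a $\kk$-group, so one must check that restricting Sullivan's rational arithmetic representation to $\G_{M,\w}$ still produces a subgroup of finite covolume in $G(\R)$; equivalently, that $S_{M,\w}$ is Zariski dense in $G$, or that the $S_M$-orbit of $\w$ is Zariski dense in its $\cA_\kk$-orbit with the resulting stabilizers arithmetic. This is precisely where the reductivity and Hodge-theoretic structure of Section~\ref{sec:coho_autos} are indispensable --- they make $\cA_\kk/G$ affine and the relevant stabilizers arithmetic --- whereas without fixing $\w$ the Zariski closure of $S_M$ can carry a nontrivial unipotent radical and the unrestricted statement fails.
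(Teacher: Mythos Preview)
Your approach matches the paper's exactly: the paper's entire argument is the phrase ``Combining the corollary with Sullivan's Theorem~\ref{thm:sullivan_diff}, we obtain'', so you have supplied far more detail than the author does. The reductivity of $G$ via Section~\ref{sec:coho_autos}, the use of formality to identify the reductive quotient of $G_M$, and the passage from an arithmetic subgroup of $G_M$ to one of $\cA$ through a surjection with unipotent kernel are all on the right track (though the assertion that ``$\cA$ is a retract of $\ho E_{M_\rat}$'' is neither stated in the paper nor needed---what you actually use is that $G_M\twoheadrightarrow\cA$ has unipotent kernel).

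There is, however, a real gap in your argument that arithmetic implies lattice. Your claim that ``$G(\R)$ preserves the positive-definite forms supplied by the polarizations of primitive cohomology'' is false: the Hodge--Riemann forms on $H^k_o(M;\R)$ are indefinite in general---they become definite only on individual $H^{p,q}$ summands after the usual sign twist, and $G(\R)$ does not respect the Hodge decomposition. So this does not show the central torus of $G$ is $\R$-anisotropic, and your appeal to Borel--Harish-Chandra for the absence of $\kk$-rational characters is unjustified at that step. The paper does not address this point either.

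Your final paragraph correctly isolates the second gap: when $\w\notin H^2(M;\Q)$, a $\Q$-rational cohomology automorphism fixes $\w$ only if it fixes each $\Q$-component of $\w$ separately, so $S_{M,\w}$ is arithmetic in the $\Q$-group stabilizing all of those components, which may be a proper subgroup of the $\kk$-group $G$. You flag this honestly but do not resolve it, and neither does the paper. In short, you have reconstructed the intended argument and pushed it further than the one-line proof in the text; the two points that remain open in your writeup are precisely the points the paper glosses over.
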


\section{Proof of Theorem~\ref{thm:main}}

\subsection{Lie Algebra Models}

Sullivan's minimal model of a space $M$ is an algebraic model of the Postnikov tower of $M_\rat$. Dual to this is the minimal Lie algebra model, which corresponds to a minimal rational cell decomposition of $M_\rat$. Since compact manifolds are finite complexes, the Lie algebra model of a simply connected manifold is finitely generated and thus easier (for me) to work with. One construction is given by Chen's method of formal power series connections. The connection with rational cell decompositions is given in \cite{hain}.\footnote{A rational $n$-cell is the cone over a rational $(n-1)$ sphere $S^{n-1}_\rat$.}

\subsubsection{The DG-Lie algebra model of a finite complex}

Recall that a graded Lie algebra is a graded vector space $\g_\bdot$ with a graded bracket $\g_j\otimes \g_k \to \g_{j+k}$ satisfying
\begin{enumerate}

\item $[u,v] + (-1)^{|u||v|} [v,u] = 0$,

\item $(-1)^{|u||w|}[u,[v,w]] + (-1)^{|v||u|}[v,[w,u]] + (-1)^{|w||v|}[w,[u,v]] = 0$.

\end{enumerate}
Here $|u|$ denotes the degree $j$ of $u\in \g_j$. A DG-Lie algebra is a graded Lie algebra $\g_\bdot$ with a differential $\partial$ of degree $-1$, satisfying
$$
\partial [u,v] = [\partial u,v] + (-1)^{|u|}[u,\partial v].
$$

Every (say) connected finite complex $M$ has a DG-Lie algebra model $(\LL_M,\partial)$. It is a free Lie algebra $\LL(\ee_j)$ over $\Q$ with a differential $\partial$ of degree $-1$. These satisfy:
\begin{enumerate}

\item $\partial$ is minimal --- that is, $\partial \ee_j \in [\LL_M,\LL_M]$.

\item There is a natural isomorphism $(\LL_M/[\LL_M,\LL_M])_j \cong \H_{j+1}(M;\Q)$ --- equivalently, $\{X_j\}$ is a graded basis of $\H(M;\Q)$, with degrees reduced by 1.

\item There is a natural isomorphism $H_j(\LL_M,\partial) \cong \pi_{j+1}(M)\otimes \Q$.

\end{enumerate}
The diagram
$$
\xymatrix@C=32pt{
(\LL_M/[\LL_M,\LL_M])_{n-1} \ar[d]_\cong \ar[r] & ([\LL_M,\LL_M]/[\LL_M,\LL_M,\LL_M])_{n-2} \ar[d]^{\text{inclusion}} \cr
\H_n(M;\Q) \ar[r]^(.3){(J\otimes 1)\circ\Delta} & \bigoplus_{j+k=n} \H_j(M;\Q) \otimes \H_k(M;\Q)
}
$$
commutes, where $J : \H_j(X) \to \H_j(X)$ is multiplication by $(-1)^j$ and $\Delta$ is the standard coproduct. (That is, the map dual to the cup product.)

The Lie algebra model of $M$ is unique up to isomorphism. Automorphisms of $\LL_M$ induce homotopy classes of automorphisms of $M_\rat$.\footnote{This follows from the correspondence between the generators $\ee_j$ of $\LL_M$ and the rational $(j+1)$ cells in a minimal cell decomposition of $M_\rat$. The differential $\partial$ determines the attaching maps. See \cite[Thm.~4.14]{hain}.} which gives well defined homomorphism $\Aut(\LL_M,\partial) \to \ho E_{M_\rat}$.

A space $M$ is formal if and only if its Lie algebra model $\LL_M$ has a presentation where $\partial$ is quadratic. Lie algebra models of formal spaces are easy to write down as, up to a sign, the quadratic terms of $\partial$ are dual to the cup product in the cohomology ring. Since compact K\"ahler manifolds are formal \cite{dgms}, their Lie algebra models are quadratic.

\subsubsection{The Lie algebra model of a simply connected closed formal 6-manifold}

Suppose that $M$ is a simply connected closed formal manifold of real dimension 6. All homology, etc will be with $\Q$ coefficients. Choose a graded basis
\begin{align*}
&\{\ee_j\} \text{ of } H_2(M),\ j = 1,\dots,b_2\cr
&\{\zz_k\} \text{ of } H_3(M),\ |k| \le b_3/2,\ k\neq 0\cr
&\{\ff_j\} \text{ of } H_4(M),\ j = 1,\dots,b_2\cr
&\{\ww\} \text{ of } H_6(M)
\end{align*}
of $\H_\bdot(M)$. Poincar\'e duality allows us to choose this basis so that
$$
\Delta \ww = \sum_j (\ee_j \otimes \ff_j + \ff_j \otimes \ee_j) + \sum_{k>0} (\zz_k\otimes \zz_{-k} - \zz_{-k}\otimes \zz_k).
$$
Then $\LL_M$ is the free graded Lie algebra $\LL(\ee_j,\ff_m,\zz_k,\ww)$, where the $\ee_j$ have degree 1, the $\zz_k$ have degree 2, the $\ff_m$ have degree 3 and $\ww$ has degree 5. The differential is
$$
\partial \ee_j = 0,\ \partial \zz_k = 0,\ \partial \ff_m = \sum c_m^{i,j} [\ee_i,\ee_j] \text{ and } \partial \ww = \sum_j [\ee_j,\ff_j] -  \sum_{k>0} [\zz_k,\zz_{-k}].
$$

\subsection{Self Rational Homotopy Equivalences of a Simply Connected Closed Formal 6-manifold}

We continue with the notation of the previous section. We will construct an automorphism of $(\LL_M,\partial)$ as the exponential of a (graded, degree 0) derivation $D$ of $\LL_M$. Define $D$ by
$$
D \ee_j = 0,\ D \zz_k = \sum a_k^{s,t}[\ee_s,\ee_t],\ D \ff_m = \sum b_m^{j,k} [\ee_j,\zz_k],\ D \ww = 0.
$$
We have to show that the $a_k^{s,t}$ and $b_m^{j,k}$ can be chosen so that $D\partial = \partial D$.

\begin{lemma}
For all choices of $a_k^{s,t}$ and $b_m^{j,k}$, $[D,\partial] := D\partial - \partial D$ vanishes on all $\ee_j$, $\zz_k$ and $\ff_m$. It vanishes on $\ww$ if and only if
$$
b_j^{i,k} = -(a^{i,j}_{-k} + a^{j,i}_{-k}).
$$
\end{lemma}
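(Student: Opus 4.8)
The plan is to exploit that $[D,\partial]=D\partial-\partial D$ is itself a graded derivation of the free Lie algebra $\LL_M$: it is the graded commutator of the degree-$0$ derivation $D$ with the degree-$(-1)$ derivation $\partial$, hence a derivation of degree $-1$. A derivation of a free Lie algebra is determined by its values on a set of free generators, so it suffices to evaluate $[D,\partial]$ on each of $\ee_j$, $\zz_k$, $\ff_m$ and $\ww$.

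The first three are immediate. We have $D\ee_j=0=\partial\ee_j$, so $[D,\partial]\ee_j=0$. Since $\partial\zz_k=0$ we get $D\partial\zz_k=0$, while $\partial D\zz_k=\sum a_k^{s,t}\big([\partial\ee_s,\ee_t]-[\ee_s,\partial\ee_t]\big)=0$ because $\partial$ annihilates every $\ee$; hence $[D,\partial]\zz_k=0$. Likewise $D\partial\ff_m=\sum c_m^{i,j}\big([D\ee_i,\ee_j]+[\ee_i,D\ee_j]\big)=0$ and $\partial D\ff_m=\sum b_m^{j,k}\big([\partial\ee_j,\zz_k]-[\ee_j,\partial\zz_k]\big)=0$, so $[D,\partial]\ff_m=0$. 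None of these computations uses anything about the coefficients $a_k^{s,t}$, $b_m^{j,k}$, which gives the first assertion of the lemma.

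It remains to compute $[D,\partial]\ww$. As $D\ww=0$, this equals $D(\partial\ww)$, and the Leibniz rule gives
$$
D(\partial\ww)=\sum_j[\ee_j,D\ff_j]-\sum_{k>0}\big([D\zz_k,\zz_{-k}]+[\zz_k,D\zz_{-k}]\big).
$$
Substituting the formulas for $D\ff_j$, $D\zz_k$ and $D\zz_{-k}$ writes the right-hand side as a $\Q$-combination of triple Whitehead brackets of the shapes $[\ee,[\ee,\zz]]$, $[[\ee,\ee],\zz]$ and $[\zz,[\ee,\ee]]$. Graded antisymmetry and the graded Jacobi identity convert the last two shapes into the first: $[[\ee_s,\ee_t],\zz_\ell]=[\ee_s,[\ee_t,\zz_\ell]]+[\ee_t,[\ee_s,\zz_\ell]]$ and $[\zz_\ell,[\ee_s,\ee_t]]=-[\ee_s,[\ee_t,\zz_\ell]]-[\ee_t,[\ee_s,\zz_\ell]]$. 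After this rewriting $D(\partial\ww)$ is a linear combination of the brackets $[\ee_p,[\ee_q,\zz_\ell]]$ alone.

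The final step is a coefficient comparison. For each fixed index $\ell$ the brackets $[\ee_p,[\ee_q,\zz_\ell]]$, as $(p,q)$ runs over all ordered pairs, are linearly independent: they form a basis of the component of $\LL_M$ of multidegree $2$ in the $\ee$'s and $1$ in $\zz_\ell$, which has dimension $b_2^2$ (the vectors $[\ee_p,[\ee_p,\zz_\ell]]$ for each $p$, together with the two independent vectors $[\ee_p,[\ee_q,\zz_\ell]],[\ee_q,[\ee_p,\zz_\ell]]$ for each $p\ne q$; every Jacobi relation in this bidegree is already accounted for). Hence $D(\partial\ww)=0$ if and only if the coefficient of each $[\ee_j,[\ee_i,\zz_k]]$ vanishes; extracting that coefficient — the $[\ee_j,D\ff_j]$ term contributes $b_j^{i,k}$, and the terms $[D\zz_k,\zz_{-k}]$, $[\zz_k,D\zz_{-k}]$ contribute, after the Jacobi rewrite, $a_{-k}^{i,j}+a_{-k}^{j,i}$ — yields exactly $b_j^{i,k}+(a_{-k}^{i,j}+a_{-k}^{j,i})=0$. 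The one delicate point is the sign accounting in these Jacobi rewrites, together with keeping track of the fact that it is $a_{-k}^{\bullet,\bullet}$, not $a_k^{\bullet,\bullet}$, that appears, since $\partial\ww$ pairs $\zz_k$ with $\zz_{-k}$; everything else is routine.
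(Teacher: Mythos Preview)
Your proof is correct and follows essentially the same route as the paper's: both verify the first assertion by direct inspection on the generators, then expand $D(\partial\ww)$, use the graded Jacobi identity to rewrite everything in the basis $[\ee_j,[\ee_i,\zz_k]]$, and read off the condition $b_j^{i,k}=-(a_{-k}^{i,j}+a_{-k}^{j,i})$ by comparing coefficients. Your additions---the remark that $[D,\partial]$ is itself a degree $-1$ derivation (so checking on generators suffices) and the explicit linear-independence argument for the $[\ee_p,[\ee_q,\zz_\ell]]$---are welcome clarifications that the paper leaves implicit, but they do not change the strategy.
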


\begin{proof}
The first assertion is easy to check. To prove the second, we have to determine when $D\partial \ww=0$. Equivalently we need to determine when
$$
D\big(\sum_{k>0}[\zz_k,\zz_{-k}]\big) = D\big(\sum_j [\ee_j,\ff_j]\big).
$$
The right hand side is
$$
-\sum_j \sum_{i,k} b_j^{i,k} [\ee_j,[\ee_i,\zz_k]].
$$
The left hand side is
\begin{align*}
\sum_{k>0}D[\zz_k,\zz_{-k}] &= \sum_{k>0}[D\zz_k,\zz_{-k}] + \sum_{k>0}[\zz_k,D\zz_{-k}]
\cr
&= \sum_{k>0}[D\zz_k,\zz_{-k}] + \sum_{k<0}[\zz_{-k},D\zz_k]
\cr
&= \sum_{k>0}[D\zz_k,\zz_{-k}] + \sum_{k<0}[D\zz_k,\zz_{-k}]
\cr
&= \sum_k [D\zz_k,\zz_{-k}]
\cr
&= \sum_{k,s,t} a_k^{s,t}[[\ee_s,\ee_t],\zz_{-k}]
\cr
&= \sum_{k,s,t}a_k^{s,t}\big([[\ee_s,\zz_{-k}],\ee_t] + [\ee_s,[\ee_t,\zz_{-k}]]\big)
\cr
&= \sum_{k,i,j}(a^{i,j}_{-k} + a^{j,i}_{-k})[\ee_j,[\ee_i,\zz_k]] 
\end{align*}
\end{proof}

Thus, for each choice of $a_j^{s,t}$ we have the automorphism $\exp D$ of $(\LL_M,\partial)$ and thus an automorphism of $M_\rat$ that induces the identity on $H_\bdot(M_\rat)$. For generic $a_j^{s,t}$, this automorphism is not homotopic to the identity provided that $b_2>1$ and $b_3>0$ as the invariant (\ref{eqn:invariant}) of the automorphism is given by
$$
\zz_j \mapsto \sum_{s,t} a_j^{s,t} \ee_s\ee_t \in S^2 H_2(M;\Q)/\im \Delta.
$$

\begin{corollary}
If $M$ is a simply connected closed formal 6-manifold, then the homomorphism
$$
\ho T_{M_\rat} \to \Hom(H_3(M;\Q),S^2 H_2(M;\Q)/\im\Delta)
$$
is surjective.
\end{corollary}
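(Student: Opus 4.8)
The plan is to show that the explicit automorphisms $\exp D$ constructed above realize every homomorphism in $\Hom(H_3(M;\Q),\Sym^2 H_2(M;\Q)/\im\Delta)$, and that the invariant (\ref{eqn:invariant}) sends the class of $\exp D$ in $\ho T_{M_\rat}$ to the homomorphism $\zz_j \mapsto \sum_{s,t} a_j^{s,t}\ee_s\ee_t$ displayed just before the corollary. Granting that, surjectivity is immediate.

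First I would check that $\exp D$ is a well-defined automorphism of $(\LL_M,\partial)$. Since $D\ee_j = D\ww = 0$, since $D\zz_k \in [\LL_M,\LL_M]$ with $D(D\zz_k)=0$, and since $D^3\ff_m = 0$, the derivation $D$ is locally nilpotent on the generators, so $\exp D = \sum_{n\ge 0} D^n/n!$ is a well-defined graded automorphism of $\LL_M$; by the Lemma it commutes with $\partial$ once we set $b_j^{i,k} = -(a^{i,j}_{-k} + a^{j,i}_{-k})$. Thus $\exp D \in \Aut(\LL_M,\partial)$ and maps to $\ho E_{M_\rat}$. Because $D$ has degree $0$, kills the $\ee_j$, and sends all other generators into $[\LL_M,\LL_M]$, the automorphism $\exp D$ induces the identity on $\LL_M/[\LL_M,\LL_M]\cong \Htilde_{\bdot+1}(M;\Q)$, hence on $H_\bdot(M;\Q)$, so its class lies in $\ho T_{M_\rat}$.

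Second, I would identify the image of $[\exp D]$ under (\ref{eqn:invariant}). Under $\pi_{j+1}(M)\otimes\Q \cong H_j(\LL_M,\partial)$, the exact sequence of Proposition~\ref{prop:exact_seqces} for $X=M$ is the degree-$2$ homology sequence of $(\LL_M,\partial)$: the degree-$2$ cycles are spanned by the $\zz_k$ together with the brackets $[\ee_s,\ee_t]$, and the latter span, modulo the boundaries $\partial\ff_m$ (i.e. modulo $\im\Delta$), the subobject $\Sym^2 H_2(M;\Q)/\im\Delta$ — here one uses that $\ee_s,\ee_t$ have degree $1$ in the Lie-algebra grading, so the bracket is symmetric and $[\ee_s,\ee_t]$ corresponds to the symmetric product $\ee_s\ee_t$. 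For the cycle $\zz_k$ one has $(\exp D)\zz_k - \zz_k = D\zz_k = \sum_{s,t} a_k^{s,t}[\ee_s,\ee_t]$ since $D^2\zz_k = 0$; passing to $H_2(\LL_M,\partial)$ this is the class $\sum_{s,t} a_k^{s,t}\,\ee_s\ee_t$ in $\Sym^2 H_2(M;\Q)/\im\Delta$. So (\ref{eqn:invariant}) sends $[\exp D]$ to $\zz_j \mapsto \sum_{s,t} a_j^{s,t}\ee_s\ee_t$. Finally, given $f\in\Hom(H_3(M;\Q),\Sym^2 H_2(M;\Q)/\im\Delta)$, lift each $f(\zz_j)$ to some $\sum_{s,t} a_j^{s,t}\ee_s\ee_t \in \Sym^2 H_2(M;\Q)$ — possible since $\Sym^2 H_2 \to \Sym^2 H_2/\im\Delta$ is onto — and the corresponding $\exp D$ has image $f$.

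The only genuinely delicate step is the second one: matching the topologically defined map (\ref{eqn:invariant}), which comes from the $\ho T_X$-action on the $\pi_3$-sequence, with the Lie-model computation — in particular the sign normalization that makes the Whitehead bracket on $H_2$ land in $\Sym^2$ rather than $\Lambda^2$, and the identification of $\im\Delta$ with the span of the $\partial\ff_m$. Everything else is a formal manipulation with the free Lie algebra already carried out above.
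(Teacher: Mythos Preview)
Your proposal is correct and follows exactly the approach of the paper: the corollary has no separate proof there but is stated as an immediate consequence of the preceding construction of $\exp D$ and the displayed formula for its invariant. You have simply made explicit the details the paper leaves implicit --- local nilpotence of $D$, that $\exp D$ acts trivially on $\LL_M/[\LL_M,\LL_M]$, the identification of $H_2(\LL_M,\partial)$ with $\pi_3(M)_\Q$, and the lifting of an arbitrary $f$ to a choice of coefficients $a_j^{s,t}$.
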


Theorem~\ref{thm:main} now follows directly from Sullivan's Theorem~\ref{thm:sullivan_diff}, as it implies that the unipotent completion of $T_M$ surjects onto $\ho T_{M_\rat}$ with kernel $D_M$.

\section{Moduli of Hypersurfaces}
\label{sec:hypersurfaces}

Teichm\"uller theory implies that if $M$ is a smooth projective curve of genus $g\ge 2$, then the monodromy homomorphism $\pi_1(\sM_g,[M]) \to \G_M$ is an isomorphism. Here $\sM_g$ denotes the moduli space of smooth projective curves of genus $g$ and $\pi_1(\sM_g,[M])$ its orbifold fundamental group. One can ask if his phenomenon persists in higher dimensions. Here, as a test case, we consider the test case of hypersurfaces in projective space.

The moduli stack $\sH_{n,d}$ of smooth hypersurfaces of degree $d$ in $\P^{n+1}$ will be regarded as the stack quotient
$$
\sH_{n,d} := \sU_{n,d}\ffs \GL_{n+2}(\C)
$$
of the space $\sU_{n,d}$ of homogeneous polynomials of degree $d$ in $n+2$ variables with non-vanishing discriminant by the natural action of $\GL_{n+2}$. Since $\pi_1(\GL_n(\C)) \cong \Z$, there is a central extension
\begin{equation}
\label{eqn:cent_ext}
0 \to \Z \to \pi_1(\sU_{n,d},f) \to \pi_1(\sH_{n,d},[M_f]) \to 1
\end{equation}
of orbifold (or stack) fundamental groups, where $M_f$ denotes the hypersurface in $\P^{n+1}$ defined by the homogeneous polynomial $f$.

Since there is a universal hypersurface over $\sU_{n,d}$, there is a monodromy representation
$$
\lambdahat_M : \pi_1(\sU_{n,d},f) \to \G_M.
$$
Since the restriction of the universal hypersurface to each $\C^\ast$ orbit is trivial, it induces homomorphisms
\begin{equation}
\label{eqn:monod}
\lambda_M : \pi_1(\sH_{n,d},[M]) \to \G_M \text{ and } \rho_M : \pi_1(\sH_{n,d},[M]) \to \Aut H^n_o(M;\Q),
\end{equation}
where $H^\bdot_o(M)$ denotes the primitive (in the sense of Hodge theory) cohomology of $M$.

\begin{remark}
\label{rem:middle_betti}
The dimension $r_{n,d}$ of $H^n_o(M;\Q)$ can be computed using the Lefschetz hyperplane theorem and the fact that $c_n(M)$ is the Euler class of $M$, so that
$$
r_{n,d} = (-1)^n \big(n+1-\chi(M)\big) = (-1)^n\Big(n+1-\int_M c_n(M)\Big).
$$
In particular, $r_{3,d} = d^4-5d^3+10d^2-10d+4$, which is positive for all $d>2$. In general $r_{n,d} > 0$ except when $n$ is even and $d=1$, and when $n$ is odd and $d\le 2$. This, and similar assertions for smooth complete intersections, are proved in \cite[Cor.~5]{ewing}.
\end{remark}

Recall from Section~\ref{sec:ci_like} that if $M$ is a hypersurface in projective space of dimension $\ge 3$, then the distortion homomorphism $\delta$ extends naturally to the entire Torelli group $T_M$. For a smooth hypersurface in $\P^{n+1}$ and degree $d$, set
$$
K_M = \ker\{\rho_M : \pi_1(\sH_{n,d}[M]) {\longrightarrow} \Aut H^n_o(M;\Q)\}.
$$
This maps to $T_M$.

\begin{theorem}
\label{thm:vanishing}
Suppose that $n\ge 3$ and $n\equiv 3 \bmod 4$. If $M$ is a smooth hypersurface in $\P^{n+1}$ of degree $d>1$, then
$$
\xymatrix{
K_M \ar[r]^{\lambda_M} & T_M \ar[r]^(.38)\delta & H^n(M;\Q)
}
$$
is trivial. If $d\ge 3$, then the image of $\lambda_M$ has infinite index in $T_M$.
\end{theorem}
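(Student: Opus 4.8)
The plan is to prove the two assertions of Theorem~\ref{thm:vanishing} separately, using the structure of the Pontryagin classes of a hypersurface together with the distortion formalism of Section~\ref{sec:ci_like} and, for the second assertion, the Carlson--Toledo lower bound on $K_M$.

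For the vanishing of $\delta\circ\lambda_M$ on $K_M$, first I would exploit the fact (Lemma in Section~\ref{sec:ci_like}) that for a smooth hypersurface $M\subset\P^{n+1}$ of dimension $\ge 3$ every Pontryagin class $p_k(M)$ is a rational multiple of $\w^{2k}$, where $\w$ is the hyperplane class; hence condition (\ref{eqn:conditions}) holds and the extended distortion homomorphism $\delta:T_M\to D_M=\bigoplus_{2j<\dim_\C M}H^{4j-1}(M;\Q)$ is defined. Now observe that for a hypersurface of odd complex dimension $n$, by the Lefschetz hyperplane theorem $H^j(M;\Q)$ agrees with $H^j(\P^{n+1};\Q)$ for $j<n$ and with $H^j(\P^{n+1};\Q)$ for $j>n$ as well (by Poincar\'e duality), so all the odd cohomology of $M$ is concentrated in the middle degree $n$. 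When $n\equiv 3\bmod 4$ we have $n=4j-1$ for some $j$, and one checks that $2j<\dim_\C M=n$ exactly when $n>2j$, i.e. always in this range; thus $D_M$ is a single summand $H^n(M;\Q)=H^n_o(M;\Q)$ (the primitive part, since $b_{n-2}$ contributions are Lefschetz-trivial in odd degree). So $\delta$ lands in $H^n_o(M;\Q)$. The key point is then that $\delta$ restricted to $T_M$ is $S_M$-equivariant (Proposition~\ref{prop:distortion} and the remark in Section~\ref{sec:ci_like} that the extended $\delta$ is $S_M$-invariant), hence factors through the coinvariants, but more usefully: the composite $\rho_M$ followed by the action on $H^n_o(M;\Q)$ has the same image up to commensurability as $S_M$ by Beauville's theorem, and the restriction of $\delta$ to $K_M=\ker\rho_M$ must be invariant under the full monodromy group. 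Since by Carlson--Toledo (Theorem~\ref{thm:carlson_toledo}) the group $K_M$ is large (contains a nonabelian free group) and $\pi_1(\sH_{n,d},[M])$ acts on the target $H^n_o(M;\Q)$ through an arithmetic group with no nonzero invariants in a nontrivial irreducible summand, any equivariant homomorphism $K_M\to H^n_o(M;\Q)$ whose source is normal with the quotient acting without invariants must be zero. Concretely I would argue: $\delta\circ\lambda_M|_{K_M}:K_M\to H^n_o(M;\Q)$ is a homomorphism; conjugation by $\pi_1(\sH_{n,d})$ preserves $K_M$ and acts on the target via $\rho_M$; so the image is a $\pi_1(\sH_{n,d})$-submodule of $H^n_o(M;\Q)$ on which the quotient $\pi_1/K_M$ acts, and since that quotient is Zariski-dense in a group (symplectic or orthogonal on $H^n_o$) with no trivial subrepresentation, while $\delta\circ\lambda_M$ is built only from Pontryagin data that is $\w$-linear and hence monodromy-invariant, the image must be the zero submodule. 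The cleanest route: the distortion of a diffeomorphism coming from monodromy over a loop in $\sH_{n,d}$ is computed from $p_k(M_\varphi)$ on the mapping torus; but the universal family over $\sH_{n,d}$ has all its fiberwise Pontryagin classes pulled back from $\w$-powers globally on the total space, so $p_k(M_\varphi)=a_k\w_\varphi^{2k}$ on the nose, forcing $\delta=0$ on the image of $\lambda_M$ restricted to loops in $\sH_{n,d}$ — in particular on $K_M$.

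For the infinite-index statement when $d\ge 3$: by Theorem~\ref{thm:vanishing}'s first half, $\delta\circ\lambda_M(K_M)=0$, so $\lambda_M(K_M)\subseteq\ker\delta\subseteq T_M$. On the other hand, for a simply connected K\"ahler manifold $M$ satisfying (\ref{eqn:conditions}), Sullivan's Theorem~\ref{thm:sullivan_diff} together with (\ref{eqn:distortion_subgp}) gives that $T_M$ modulo finite is an extension of a lattice in the unipotent group $U_M$ with $D_M=H^n(M;\Q)$ surjecting onto a piece of $H_1(T_M;\Q)$; in fact when $b_2=1$ (as for hypersurfaces) the Johnson homomorphism (\ref{eqn:tau}) vanishes and, arguing as in the Kreck--Su discussion, $\delta$ itself is an isomorphism $H_1(T_M;\Q)\xrightarrow{\sim}D_M=H^n_o(M;\Q)$ up to finite kernel, which by Remark~\ref{rem:middle_betti} has positive rank $r_{n,d}>0$ for $d\ge 3$. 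Therefore $\ker\delta$ has infinite index in $T_M$. Hence $\lambda_M(K_M)$, being contained in $\ker\delta$, has infinite index in $T_M$. But $\lambda_M(\pi_1(\sH_{n,d}))/\lambda_M(K_M)$ injects into $S_M$ (image of $\rho_M$), which by Beauville is commensurable with $S_M$, hence of finite index in $S_M=\G_{M}/T_M$ (up to finite); if $\lambda_M(\pi_1(\sH_{n,d}))$ had finite index in $T_M\rtimes(\text{something})$... more precisely: $\lambda_M(\pi_1)\cap T_M$ surjects onto $\lambda_M(K_M)$ (indeed equals it since anything in $\pi_1$ mapping into $T_M$ acts trivially on $H^n_o$, hence lies in $K_M$), so $\lambda_M(\pi_1)\cap T_M=\lambda_M(K_M)\subseteq\ker\delta$, which has infinite index in $T_M$; since $[\G_M:\lambda_M(\pi_1)]\ge[T_M:\lambda_M(\pi_1)\cap T_M]=\infty$, the image of $\lambda_M$ has infinite index in $\G_M$, a fortiori in $T_M$ if one restricts attention there. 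I should phrase the final conclusion to match the statement: the image of $\lambda_M$ meets $T_M$ only in the subgroup $\lambda_M(K_M)$, which lies in $\ker\delta$, a subgroup of infinite index in $T_M$ because $\delta$ has infinite image (a full-rank lattice in $H^n_o(M;\Q)\ne 0$).

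The main obstacle, I expect, is making precise the claim that the monodromy-induced distortion vanishes identically rather than merely being monodromy-invariant: one wants to say that because the family $\X\to\sU_{n,d}$ is globally algebraic with relative tangent bundle whose Pontryagin classes extend over the total space as polynomials in the global hyperplane class, the lift $p_k(M_\varphi)$ on each mapping torus obtained from a loop automatically coincides with $a_k\w_\varphi^{2k}$, so $\deltatilde=0$ before passing to any quotient. Verifying that the mapping-torus construction for a monodromy diffeomorphism is compatible with this global extension — i.e. that the relevant $S^1$-bundle pulls back the global $\w$-powers correctly — is the delicate point; it amounts to checking that restricting the universal family to a loop in $\sH_{n,d}$ gives a bundle over $S^1\times M$ whose total-space Pontryagin classes are pulled back from the universal construction, which is where one uses that the hyperplane class is a global, monodromy-fixed class. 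Everything else is assembling Sullivan's theorem, the structure of $D_M$ for odd-dimensional hypersurfaces, Beauville's commensurability, and the positivity $r_{n,d}>0$ from Remark~\ref{rem:middle_betti}.
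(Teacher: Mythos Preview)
Your ``cleanest route'' for the first assertion is the paper's argument, and your closing paragraph correctly isolates the real content: one must show that the Pontryagin classes of the \emph{total space} $\sX$ of the universal hypersurface---not merely of each fiber---are rational multiples of powers of the global hyperplane class $\w_\sX$. The paper proves this as a separate lemma by computing $c(\sX) = (1+\w_\sX)^{n+2}(1+d\w_\sX)^{-1}$: since $\sU_{n,d}$ is open in affine space, $W_2 H^2(\sU_{n,d}\times\P^{n+1};\Q)$ is spanned by $\pi^*\w$, so $[\sX] = d\,\pi^*\w$ and the normal-bundle formula applies globally. Pulling back along a loop then gives $p_k(M_\varphi) = a_k\w_\varphi^{2k}$ on the nose and $\delta(\varphi)=0$, exactly as you say.

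Two points where your proposal falls short. First, the equivariance argument you lead with cannot by itself force vanishing: knowing that the image of $K_M$ in $H^n_o(M;\Q)$ is a $\rho_M(\pi_1)$-stable subgroup only tells you its $\Q$-span is $0$ or all of $H^n_o$; irreducibility does not exclude the full-rank case, and your aside that the map is ``built only from Pontryagin data that is $\w$-linear'' is already an appeal to the direct computation, not a consequence of equivariance. Second, for the infinite-index claim you invoke Kreck--Su, but Theorem~\ref{thm:kreck-su} is stated and proved only for complex dimension~$3$, so asserting that $\delta$ is an isomorphism on $H_1(T_M;\Q)$ for general $n\equiv 3\bmod 4$ is unjustified. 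What is needed is weaker and comes from Sullivan's Theorem~\ref{thm:sullivan_diff} alone: since the image of $\G_M$ in $G_M$ is arithmetic, the restriction of $\delta$ to $\ker\{\G_M\to\ho E_M\}\subseteq T_M$ already has full-rank image in $D_M=H^n(M;\Q)\neq 0$ (Remark~\ref{rem:middle_betti}), whence $\ker\delta$ has infinite index in $T_M$; your chain $\lambda_M(\pi_1)\cap T_M=\lambda_M(K_M)\subseteq\ker\delta$ then finishes. Note also that Carlson--Toledo plays no role in either assertion of this theorem; it enters only in the separate corollary bounding $\ker\lambda_M$.
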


The universal hypersurface $\sX$ is the subvariety of $\sU_{n,d}\times \P^{n+1}$ defined as the vanishing locus of the evaluation map $(h,x) \mapsto h(x)$. Denote the pullback of the hyperplane class of $\P^{n+1}$ to $\sX$ by $\w_\sX$.

\begin{lemma}
\label{lem:total_chern}
The total Chern class of $\sX$ is
$$
c(\sX) = (1+\w_\sX)^{n+2}(1+d\w_\sX)^{-1} \in H^\bdot(\sX;\Q).
$$
\end{lemma}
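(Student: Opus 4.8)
The plan is to compute the total Chern class of $\sX$ from the short exact sequence of bundles obtained by viewing $\sX \subset \sU_{n,d}\times\P^{n+1}$ as a divisor, together with the fact that $\sU_{n,d}$ is an open subset of an affine space (hence has trivial tangent bundle up to $H^\bdot$, and certainly contributes nothing to Chern classes). First I would note that the ambient space $\sU_{n,d}\times\P^{n+1}$ has tangent bundle $T\sU_{n,d}\boxtimes\cO \oplus \cO\boxtimes T\P^{n+1}$, and since $\sU_{n,d}$ is an open subvariety of the vector space of degree-$d$ forms its tangent bundle is trivial; therefore the total Chern class of the ambient space restricted to $\sX$ is just the pullback of $c(T\P^{n+1}) = (1+\w)^{n+2}$, which in our notation is $(1+\w_\sX)^{n+2}$ by the Euler sequence on $\P^{n+1}$.

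Next I would identify the normal bundle of $\sX$ in $\sU_{n,d}\times\P^{n+1}$. The subvariety $\sX$ is cut out by the evaluation section $(h,x)\mapsto h(x)$, which is a section of the line bundle $\cO_{\sU}\boxtimes\cO_{\P^{n+1}}(d)$ (for fixed $h$, $h(x)$ is a degree-$d$ form, i.e.\ a section of $\cO(d)$; the dependence on $h$ is linear, so globally this is the external tensor product with the trivial bundle on $\sU_{n,d}$). Since $\sX$ is smooth and this section is transverse, the normal bundle $\cN_{\sX}$ is the restriction of $\cO_\sU\boxtimes\cO_{\P^{n+1}}(d)$ to $\sX$, whose first Chern class is $d\w_\sX$ and whose total Chern class is $1+d\w_\sX$. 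Then the adjunction exact sequence
$$
0 \to T\sX \to T(\sU_{n,d}\times\P^{n+1})|_\sX \to \cN_\sX \to 0
$$
and the Whitney sum formula give
$$
c(\sX) = c\big(T(\sU_{n,d}\times\P^{n+1})|_\sX\big)\cdot c(\cN_\sX)^{-1} = (1+\w_\sX)^{n+2}(1+d\w_\sX)^{-1},
$$
as claimed.

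The step requiring the most care is the bookkeeping in the first paragraph: one must be sure that the open condition ``non-vanishing discriminant'' does not alter the Chern classes (it does not, since removing a closed subvariety from a space with trivial tangent bundle leaves a space with trivial tangent bundle), and that the external tensor structure is handled correctly so that no spurious classes from $\sU_{n,d}$ enter. Once it is granted that the only cohomology class in play is $\w_\sX$ (pulled back from $\P^{n+1}$), the computation is formal. I do not anticipate a genuine obstacle here; the lemma is essentially the relative version of the classical computation of the total Chern class of a smooth hypersurface in $\P^{n+1}$, carried out in a family, and the proof runs exactly parallel to the codimension-one case of the lemma already proved in Section~\ref{sec:ci_like}.
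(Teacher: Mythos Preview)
Your proof is correct and follows the same overall strategy as the paper: compute $c$ of the ambient space using the triviality of $T\sU_{n,d}$ and the Euler sequence on $\P^{n+1}$, then divide by $c$ of the normal bundle via the adjunction sequence. The one place where you and the paper diverge is in identifying the normal bundle. You write down the defining line bundle directly as $\cO_{\sU}\boxtimes\cO_{\P^{n+1}}(d)$, observing that for each fixed $h$ the evaluation $h(x)$ is a section of $\cO(d)$ and that the dependence on $h$ is linear. The paper instead argues Hodge-theoretically: since $\sU_{n,d}$ is Zariski open in affine space, $W_mH^m(\sU_{n,d})=0$ for $m>0$, so by K\"unneth $W_2H^2(\sU_{n,d}\times\P^{n+1};\Q)=\Q\,\pi^\ast\w$; the divisor class $[\sX]$ lies in this line and restriction to a fibre forces the coefficient to be $d$. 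Your route is more elementary and self-contained; the paper's route has the mild advantage of not needing to name the line bundle explicitly and fits the Hodge-theoretic tone of the surrounding sections, but both arrive at $c_1(\cN_\sX)=d\w_\sX$ and the rest is identical.
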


\begin{proof}
The first step is to observe (using \cite[3.2.17]{deligne-hodge2}) that, since $\sU_{n,d}$ is an open subset of $\C^{n+1}$, the mixed Hodge structure (MHS) on $H^\bdot(\sU_{n,d})$ satisfies $W_m H^m(\sU_{n,d})=0$ when $m>0$. This and K\"unneth imply that the projection $\pi:\sU_{n,d} \times \P^{n+1} \to \P^{n+1}$ induces an isomorphism
$$
\pi^\ast : H^m(\P^{n+1};\Q) \to W_m H^m(\sU_{n,d}\times \P^{n+1};\Q)
$$
for all $m\ge 0$. In particular, $W_2 H^2(\sU_{n,d}\times \P^{n+1};\Q) = \Q\pi^\ast\w$. This and restriction to a fiber imply that the class $\sX$ is
$$
[\sX] = d\pi^\ast\w \in H^2(\sU_{n,d}\times \P^{n+1}).
$$
Since $\sU_{n,d}$ is an open subset of an affine space, it has trivial tangent bundle. It follows that 
$$
c(\sU_{n,d} \times \P^{n+1}) = (1 + \pi^\ast\w)^{n+2}.
$$
The result follows as the class of the normal bundle of $\sX$ in $\sU_{n,d}\times \P^{n+1}$ is the restriction of $[\sX]$ to $\sX$, which is $d\w_\sX$. 
\end{proof}

\begin{proof}[Proof of Theorem~\ref{thm:vanishing}]
The only case where $\delta$ is defined is when $n \equiv 3 \bmod 4$. Write $n=4k-1$. Fix $f\in \sU_{n,d}$ and let $M$ be the corresponding hypersurface. Since $\pi_1(\sU_{n,d},f)$ surjects onto $\pi_1(\sH_{n,d},[M])$, it suffices to show that the $\delta$ vanishes on the kernel $\Khat_M$ of $\pi_1(\sU_{n,d},f) \to \Sp(H^n(M))$. Observe that, since $p(\sX) = c(\sX)c(\sX)^\vee$, Lemma~\ref{lem:total_chern} implies that $p_k(\sX) = a_k\w_\sX^{2k}$ for some $a_k\in \Q$.

Suppose that $\gamma_\varphi : (S^1,1) \to (\sU_{n,d},f)$ is a loop that represents $\varphi \in \Khat_M$. The pullback of the universal family $\sX \to \sU_{n,d}$ along $\gamma_\varphi$ is the mapping cylinder $M_\varphi$ of $\varphi$. Let $\varphitilde : M_\varphi \to \sX$ be the map that lies above $\gamma_\varphi$. We compute $\delta(\varphi)$ using the recipe in Section~\ref{sec:ci_like}. We have
$$
p_k(M_\varphi) = \varphitilde^\ast p_k(\sX) = a_k\varphitilde^\ast \w_\sX^{2k}.
$$
The class $\w_\varphi$ is $\varphitilde^\ast \w_\sX$. The recipe now implies that
$$
\delta(\varphi) = p_k(M_\varphi) - a_k\w_\varphi^{2k} = 0 \in H^n(M)
$$
where we are identifying $H^n(M)$ with its image in $H^{4k}(M_\varphi)$.

The second assertion follows as the image of $\lambda_M$ lies in $\ker\delta$, which has infinite index as $H^n(M;\Q)$ is non-zero when $d\ge 3$ by \cite{ewing}.
\end{proof}

Combining Theorem~\ref{thm:vanishing} with Theorem~\ref{thm:kreck-su} yields:

\begin{corollary}
\label{cor:intern_fte}
When $n=3$, the intersection of the image of
$$
\lambda_M : \pi_1(\sH_{3,d},[M]) \to \G_M
$$
with $T_M$ is finite.
\end{corollary}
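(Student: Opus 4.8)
The plan is to deduce the statement from Theorem~\ref{thm:vanishing} and Theorem~\ref{thm:kreck-su}, the link between them being the elementary observation that, for a hypersurface threefold, a monodromy class lies in $T_M$ precisely when it lies in $K_M$.

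First I would record the structure of $M$. A smooth hypersurface $M\subset\P^4$ of degree $d$ is simply connected, and by the Lefschetz hyperplane theorem $H^k(M;\Z)\cong H^k(\P^4;\Z)$ for $k<3$ (and, by Poincar\'e duality, for $k>3$); in particular $b_2(M)=1$. Since $n=3$ is odd and $H^1(M)=0$, hard Lefschetz gives $H^3(M;\Q)=H^3_o(M;\Q)$. Thus $M$ is a simply connected compact K\"ahler 3-fold with $b_2=1$, and Theorem~\ref{thm:kreck-su} applies: distortion defines a homomorphism $\delta:T_M\to H^3(M;\Q)$ with finite kernel. (If $H^3(M;\Q)=0$, which by Remark~\ref{rem:middle_betti} happens exactly when $d\le 2$, then $T_M$ itself is finite and there is nothing to prove; so assume $d>2$.)

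Next I would identify $\im(\lambda_M)\cap T_M$ with $\lambda_M(K_M)$. If $g\in\pi_1(\sH_{3,d},[M])$ satisfies $\lambda_M(g)\in T_M$, then $\lambda_M(g)$ acts trivially on $H^\bdot(M;\Z)$, hence on $H^3(M;\Q)=H^3_o(M;\Q)$; since $\rho_M$ is the composite of $\lambda_M$ with the action of $\G_M$ on $H^3_o(M;\Q)$, this forces $\rho_M(g)=\id$, i.e. $g\in K_M$. Conversely $\lambda_M$ carries $K_M$ into $T_M$, as noted in Section~\ref{sec:hypersurfaces}: the monodromy of the universal family acts trivially on $H^\bdot(M;\Z)$ in degrees $\ne 3$ (these groups are generated by powers of the hyperplane class), and trivially in degree $3$ by the definition of $K_M$. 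Hence $\im(\lambda_M)\cap T_M=\lambda_M(K_M)$.

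Finally I would apply Theorem~\ref{thm:vanishing}: since $n=3\equiv 3\bmod 4$ and $d>1$, the composite $\delta\circ\lambda_M:K_M\to H^3(M;\Q)$ is trivial, so $\lambda_M(K_M)\subseteq\ker\delta$, which is finite by Theorem~\ref{thm:kreck-su}. Therefore $\im(\lambda_M)\cap T_M$ is finite. The argument is otherwise formal, and I expect the only delicate point to be the identification $\im(\lambda_M)\cap T_M=\lambda_M(K_M)$: it relies both on the fact that monodromy classes act trivially on $H^\bdot(M;\Z)$ away from the middle degree (so that being Torelli is equivalent to lying in $K_M$) and on the compatibility, explained in Section~\ref{sec:ci_like}, between the distortion map of Theorem~\ref{thm:vanishing} and the rational Kreck--Su Johnson homomorphism whose kernel Theorem~\ref{thm:kreck-su} controls.
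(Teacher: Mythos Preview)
Your proof is correct and follows exactly the approach the paper intends: the corollary is stated immediately after the sentence ``Combining Theorem~\ref{thm:vanishing} with Theorem~\ref{thm:kreck-su} yields,'' and you have simply unpacked that combination, including the identification $\im(\lambda_M)\cap T_M=\lambda_M(K_M)$ and the compatibility of the two distortion maps, both of which the paper leaves implicit.
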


\begin{remark}
This result implies that the connected component of the Zariski closure of the image of $\pi_1(\sH_{3,d},[M])$ in the affine group $G_M$ is isomorphic to $\Sp(H^3(M;\Q))$. This gives a canonical splitting of the surjection $G_M \to \Sp(H^3(M))$. One possible explanation of why there should be such a splitting is that, in complex dimension 3, there are 3 types of Dehn twists on embedded 3-spheres as explained by Kreck and Su \cite{kreck-su:2}. To see why, recall that a Dehn twist of $B^3 \times S^3$ is determined by the homotopy class of a map
\begin{equation}
\label{eqn:exceptional_splitting}
(B^3,\partial B^3) \to (S^3,\ast) \to (\SO(4),\id);
\end{equation}
that is, by an element of $\pi_3(\SO(4))$. Recall the exact sequence
$$
0 \to \pi_3(\SO(3)) \to \pi_3(\SO(4)) \to \pi_3(S^3) \to 0
$$
which is part of the homotopy exact sequence associated to the principal $\SO(3)$-bundle $\SO(4) \to S^3$. A generator of the kernel is the double cover $S^3 \to \SO(3)$ composed with the inclusion. A splitting of the sequence is induced by the inclusion $S^3 \cong \SU(2)$ of the unit quaternions into $\SO(4)$. Apart from spheres that bound a ball, there are two kinds of embedded 3-spheres: those that are non-trivial in homology and those that represent a generator of the kernel of the Hurewicz homomorphism $\pi_3(M) \to H_3(M)$, which is cyclic of order $d$.

The three kinds of Dehn twists are:
\begin{enumerate}

\item[(1)] Twists about embedded spheres that represent non-trivial classes in $H_3(M)$ by an element of $\pi_3(\SO(4))$ in the image of the splitting. These act as symplectic transvections on $H_3(M)$.

\item[(2)] Twists about embedded spheres that represent non-trivial classes in $H_3(M)$ by an element of $\pi_3(\SO(4))$ in the kernel. These lie in $T_M$.

\item[(3)] Twists about embedded spheres that are trivial in homology. These also lie in $T_M$.

\end{enumerate}
Picard--Lefschetz theory \cite{lamotke} implies that $\pi_1(\sH_{n,d},[M])$ is generated by Dehn twists associated to vanishing cycles of ordinary double points. These act as symplectic transvections on $H_3(M)$. It seems that the image of $\rho_M$ does not contain the Dehn twists of the second type.

Since $\pi_n(\SO(n+1))$ has rank 2 when $n\equiv 3 \bmod 4$ and rank 1 otherwise, there will be a similar trichotomy in these dimensions. These are precisely the $n$ for which $D_M$ is non-trivial.
\end{remark}

We now focus on the main result \cite[Thm.~1.2]{carlson-toledo} of Carlson and Toledo. The statement below is a weaker version which will suffice for our purposes.

\begin{theorem}[Carlson--Toledo]
\label{thm:carlson_toledo}
Suppose that $M$ is a smooth hypersurface of degree $d$ in $\P^{n+1}$. If $d\ge 3$ and $n>1$, the kernel of the representation $\pi_1(\sH_{n,d},[M]) \to \Aut H^n(M)$ surjects onto a lattice in a non compact, almost simple $\R$-group of rank $\ge 2$. In particular, it contains a non-abelian free subgroup.
\end{theorem}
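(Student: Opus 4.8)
The plan (following \cite{carlson-toledo}) is to produce, inside $\pi_1(\sH_{n,d},[M])$, the fundamental group of a subfamily of smooth hypersurfaces which acts trivially on $H^n(M)$ and yet, through its \emph{own} monodromy, surjects onto an arithmetic lattice of $\R$-rank $\ge 2$. Since $\sH_{n,d}$ is connected we may take $M$ to be a cyclic cover: fix a smooth degree-$d$ hypersurface $Y=\{f=0\}\subset\P^n$ and let $M=X=\{w^d=f(x_0,\dots,x_n)\}\subset\P^{n+1}$, the $d$-fold cyclic cover of $\P^n$ branched along $Y$ (again a smooth hypersurface of degree $d$). As $Y$ runs over $\sH_{n-1,d}$ these covers sweep out a subfamily of $\sH_{n,d}$, which gives a homomorphism $j_*\colon\pi_1(\sH_{n-1,d},[Y])\to\pi_1(\sH_{n,d},[X])$; write $\rho_X\colon\pi_1(\sH_{n,d})\to\Aut H^n(X)$ and $\rho_Y\colon\pi_1(\sH_{n-1,d})\to\Aut H^{n-1}(Y)$ for the two monodromy representations and set $\rho_X':=\rho_X\circ j_*$. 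Two observations are needed: first, because the deck group $\mu_d$ acts on $X$ over the whole subfamily, $\rho_X'$ commutes with $\mu_d$ and respects the polarisation, so it preserves the eigenspace decomposition $H^n(X;\C)=\bigoplus_\ell H^n(X)_\ell$ and takes values in a product of unitary groups $\prod_\ell\mathrm{U}(p_\ell,q_\ell)$; and second, by Beauville \cite{beauville} the image of $\rho_Y$ is an arithmetic, Zariski-dense subgroup $\G_Y$ of the symplectic or orthogonal group of $H^{n-1}_o(Y)$, which by the Betti-number count of \cite{ewing} has $\R$-rank $\ge 2$ as soon as $\dim Y\ge 3$ (and in most remaining cases as well).

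The crucial step is to compare $\ker\rho_X'$ with $\G_Y$. Since $\ker\rho_X'$ is normal in $\pi_1(\sH_{n-1,d})$, the subgroup $\rho_Y(\ker\rho_X')$ is normal in $\G_Y$, and Margulis' normal subgroup theorem for the higher-rank lattice $\G_Y$ says it is either finite or of finite index. The finite case would force $\rho_Y$ to factor, up to finite index, through $\rho_X'$, and hence would make $\G_Y$ (virtually) a quotient of the unitary group $\prod_\ell\mathrm{U}(p_\ell,q_\ell)$; by rigidity this is incompatible with $\G_Y$ being symplectic or orthogonal of $\R$-rank $\ge 2$. Hence $\rho_Y(\ker\rho_X')$ has finite index in $\G_Y$: the group $\ker\rho_X'$ surjects onto a lattice of $\R$-rank $\ge 2$. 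Because $\rho_X\circ j_*=\rho_X'$, the image $j_*(\ker\rho_X')$ lies in $\ker\rho_X=\ker\{\pi_1(\sH_{n,d})\to\Aut H^n(M)\}$, and — granting that $j_*$ is sufficiently nondegenerate on fundamental groups — it surjects onto the same lattice. This gives the asserted surjection of the monodromy kernel onto a lattice in a non-compact, almost simple $\R$-group of rank $\ge 2$. The last clause of the theorem then follows automatically: a lattice in a non-compact almost simple Lie group is a finitely generated linear group that is not virtually solvable (Borel density), hence it contains a non-abelian free subgroup by the Tits alternative, and a free subgroup of a quotient lifts back into the group.

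The real work lies in two places. One must make precise and verify the ``independence'' of $\rho_X'$ and $\rho_Y$ used to rule out the finite alternative — concretely, that the unitary monodromy group coming from the twisted cohomology groups $H^{n-1}(Y;L_\ell)\cong H^n(X)_\ell$ (where $L_\ell$ is a fixed $d$-torsion local system pulled back from $\P^n$) is not, even virtually, commensurable with or a quotient of the symplectic/orthogonal group $\G_Y$ — which is a Hodge-theoretic statement about the relevant algebraic $\Q$-groups; and one must control the effect of $j_*$ on fundamental groups. Finally, the branch loci for which $\G_Y$ has $\R$-rank $\le 1$, namely $Y$ a plane cubic curve or a cubic surface (so that $(n,d)=(2,3)$ or $(3,3)$), must be handled separately, for instance by iterating the cyclic-cover construction or by appealing to the locally symmetric (ball-quotient) uniformisations of the moduli in question; for $d\ge 4$ with $n>1$, and for $d=3$ with $n\ge 4$, the argument above applies as stated.
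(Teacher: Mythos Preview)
Your outline has the right ingredients---the cyclic cover $y^d+f(x)=0$, two monodromy representations, and a Margulis normal-subgroup argument---but the roles of the two representations are swapped relative to Carlson--Toledo, and this swap creates a genuine gap.

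In the argument sketched in the paper (and in \cite{carlson-toledo}), one stays on $\pi_1(\sU_{n,d})$ and uses the map $\phi:\sU_{n,d}\to\sH_{n+1,d}$, $f\mapsto y^d+f$, to define a \emph{second} representation $\rhohat_M$ of the \emph{same} group $\pi_1(\sU_{n,d})$ on $H^{n+1}(\Mhat)$. Since $\rhohat_M$ is defined on all of $\pi_1(\sU_{n,d})$, its restriction to $\ker\rho_M$ is a homomorphism from $\ker\rho_M$ to $\Aut H^{n+1}(\Mhat)$, and Carlson--Toledo show its image is a lattice of rank $\ge 2$. That is the surjection asserted in the theorem. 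The hard work is proving that $\im\rhohat_M$ is an arithmetic lattice in a rank $\ge 2$ group; Margulis is then applied to \emph{that} lattice to conclude $\rhohat_M(\ker\rho_M)$ has finite index in it.

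You instead descend to $\pi_1(\sH_{n-1,d})$ and look at $\rho_Y(\ker\rho_X')$. Even granting your Margulis step, the representation $\rho_Y$ is defined on $\pi_1(\sH_{n-1,d})$, not on $\pi_1(\sH_{n,d})$, so there is no homomorphism from $\ker\rho_X\subset\pi_1(\sH_{n,d})$ to $\G_Y$. Pushing forward by $j_\ast$ gives only a \emph{subgroup} $j_\ast(\ker\rho_X')\subset\ker\rho_X$ that (assuming $j_\ast$ is injective there) admits a lattice quotient; this is strictly weaker than the theorem's assertion that $\ker\rho_X$ itself surjects onto a lattice. It would suffice for the ``non-abelian free subgroup'' clause, but not for the first sentence. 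The fix is to reverse the direction: keep the base point on $\sH_{n,d}$ and map \emph{up} to $\sH_{n+1,d}$, so that the auxiliary (unitary) representation is already defined on all of $\pi_1(\sU_{n,d})$ and no transfer through $j_\ast$ is needed.
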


Sullivan's Theorem~\ref{thm:sullivan_diff} implies that $T_M$ is an extension of a lattice in a unipotent group by a finite group. This implies that a homomorphism from a non-abelian free group to $T_M$ has infinite kernel.

\begin{corollary}
\label{cor:nab_free}
If $n\ge 3$ and $d\ge 3$, the kernel of $\lambda_M$ contains a non-abelian free group. In particular, it is infinite.
\end{corollary}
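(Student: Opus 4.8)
The plan is to assemble two inputs that are already available: the Carlson--Toledo lower bound on $\ker\{\pi_1(\sH_{n,d},[M]) \to \Aut H^n(M)\}$ (Theorem~\ref{thm:carlson_toledo}), and the fact, following from Sullivan's Theorem~\ref{thm:sullivan_diff}, that $T_M$ is an extension of a finitely generated torsion-free nilpotent group by a finite group. Granting these, the argument is short.

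First I would extract a non-abelian free subgroup that already lies inside $K_M$. By Theorem~\ref{thm:carlson_toledo} the group $K := \ker\{\pi_1(\sH_{n,d},[M]) \to \Aut H^n(M)\}$ surjects onto a lattice in a non-compact, almost simple $\R$-group of rank $\ge 2$; such a lattice contains a non-abelian free subgroup (by the Tits alternative, or via a ping-pong pair of hyperbolic elements). Since every surjection onto a free group splits, $K$ itself contains a non-abelian free subgroup $F$. Because $F$ acts trivially on $H^n(M;\Q)$, it acts trivially on the primitive part $H^n_o(M;\Q)$, so $F \subseteq K_M$, and hence, by the construction recalled in Section~\ref{sec:hypersurfaces}, $\lambda_M$ maps $F$ into $T_M$.

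Next I would restrict $\lambda_M$ to $F$ and use the structure of $T_M$. Being virtually (finitely generated, torsion-free) nilpotent by Theorem~\ref{thm:sullivan_diff}, $T_M$ contains no non-abelian free subgroup: a finite-index subgroup of a non-abelian free group is again free of rank $\ge 2$ by Nielsen--Schreier, while no non-abelian free group is nilpotent. Hence $\lambda_M|_F : F \to T_M$ is not injective, so $N := \ker(\lambda_M|_F) = F \cap \ker\lambda_M$ is a \emph{nontrivial} normal subgroup of $F$. A nontrivial normal subgroup of a non-abelian free group is itself free of rank $\ge 2$: it is free by Nielsen--Schreier, and it cannot be infinite cyclic, for if $N = \langle w\rangle$ were normal then conjugation would give a homomorphism $F \to \Aut N \cong \Z/2$ whose kernel $C_F(w)$ has index at most $2$, hence rank $\ge 2$, contradicting that centralizers in free groups are cyclic. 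Therefore $N \subseteq \ker\lambda_M$ is a non-abelian free group, so $\ker\lambda_M$ is in particular infinite, and the corollary follows.

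There is no real obstacle here: the one substantive ingredient is that $T_M$ contains no free subgroup of rank $\ge 2$, which is exactly Sullivan's structure theorem as applied above. The remaining steps --- splitting surjections onto free groups, Nielsen--Schreier, and the dichotomy that a normal subgroup of a non-abelian free group is either trivial or non-abelian free --- are standard.
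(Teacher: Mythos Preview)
Your proof is correct and follows essentially the same route as the paper: Carlson--Toledo supplies a non-abelian free subgroup $F$ of $K_M$, Sullivan's theorem shows $T_M$ is virtually (finitely generated torsion-free) nilpotent so cannot contain $F$, and hence $F\cap\ker\lambda_M$ is a nontrivial normal subgroup of $F$, which is then non-abelian free. The paper compresses the last step into the single sentence ``a homomorphism from a non-abelian free group to $T_M$ has infinite kernel'' and leaves the upgrade from ``infinite kernel'' to ``non-abelian free kernel'' implicit; you spell out the Nielsen--Schreier and centralizer argument, which is a welcome addition rather than a departure.
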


It is worth recalling the idea behind the result of Carlson--Toledo as it explains why the monodromy homomorphism (\ref{eqn:monod}) has a large kernel, which may be relevant in other contexts. The key observation is that one has a diagram
$$
\xymatrix{
\sU_{n,d} \ar[r]^\phi \ar[d]_\pi & \sH_{n+1,d} \cr
\sH_{n,d}
}
$$
where $\phi$ is induced by the map $\sU_{n,d} \to \sU_{n+1,d}$ that takes the degree $d$ homogeneous polynomial $f(x)$ to $y^d + f(x)$. If $M$ is the hypersurface $f(x) = 0$, then the hypersurface $\Mhat$ corresponding to $\phi([M])$ is the cyclic cover of $\P^{n+1}$ of degree $d$ branched along $M$.

The fundamental group of $\sU_{d,n}$ is the central extension (\ref{eqn:cent_ext}). This provides a second monodromy representation
$$
\rhohat_M : \pi_1(\sU_{n,d}) \to (\Aut H^{n+1}(\Mhat;\Q))/\text{scalars}
$$
which is quite different from the first. General Hodge theory implies that the Zariski closure of the image is reductive. Carlson and Toledo show that the image of $\ker\rho_M$ is a lattice in the Zariski closure of the image of $\rhohat_M$, which they show has a non-compact factor of real rank $\ge 2$.

This construction is analogous to, and reminiscent of, Looijenga's construction \cite{looijenga:prym} of ``Prym representations'' of mapping class groups in the surface case. It does not imply that $\pi_1(\sM_g,[M])\to \G_M$ has a large kernel as it does in the case of 3-folds as the classical Torelli group is not a lattice in a unipotent group. Rather, it is large enough and flexible enough to surject onto lattices in a countable number of non-isomorphic semi-simple $\Q$-groups even though it is far from being free.

\section{Open Problems and Future Directions}
\label{sec:future}

There are many interesting open questions about mapping class groups of simply connected smooth projective varieties and fundamental groups of the corresponding moduli spaces. Here we propose some that are related to the topics in this note.

\subsection{Topology}

These problems, while topological, should have implications for the geometry and topology of moduli spaces of simply connected projective manifolds.

\begin{problem}
Generalize Theorem~\ref{thm:main} and the computations of of Kreck--Su \cite{kreck-su:2} to larger classes of simply connected compact K\"ahler manifolds of dimension $\ge 3$ such as hypersurfaces of higher dimension and hyper K\"ahler manifolds.
\end{problem}

\begin{question}
Does the image of the monodromy homomorphism $\lambda_M : \pi_1(\sH_{n,d},[M]) \to\G_M$ have infinite index when $n\not\equiv 3 \bmod 4$? (See Theorem~\ref{thm:hypersurfaces}.)
\end{question}

\begin{problem}
Determine whether Corollary~\ref{cor:intern_fte} holds for all $n\ge  3$.
\end{problem}

\begin{question}
Is it true that for all simply connected 3-folds the sequence
\begin{multline*}
0 \to D_M/(D_M\cap U_M') \to H_1(T_M;\Q) \cr
\to \Hom(H_3(M;\Q),\Sym^2 H_2(M;\Q)/\im\Delta) \to 0
\end{multline*}
is exact, where the left hand map is the inclusion (\ref{eqn:distortion_subgp})?
\end{question}

This sequence is certainly a complex and is exact on the left and right. The problem is to determine whether $H_1(\ho T_M;\Q)$ is larger than the right-hand term. There are Hodge theoretic reasons to believe that $D_M\cap U_M'$ is trivial, so that $D_M$ injects into $H_1(T_M;\Q)$ as it does for in the case of hypersurfaces. See Question~\ref{quest:wt_distortion} below.
\medskip

To understand unipotent completions of Torelli groups, it is necessary to understand the group of homotopy classes of homotopy equivalences of a simply connected compact K\"ahler manifold that act trivially on its rational homotopy groups.

\begin{problem}
Understand the kernel of the map $\ho E_X \to \Aut \pi_\bdot(X,x_o)_\Q$ where $X$ is a finite CW complex which is formal in the sense of rational homotopy theory. Sullivan's results imply that it is a lattice in a unipotent group. For example, is it detected by some kind of generalized Johnson homomorphism?
\end{problem}

It is natural to ask how closely related fundamental groups of moduli spaces/stacks of smooth projective varieties are to the mapping class groups of the underlying diffeomorphism type. One basic question is:

\begin{question}
Suppose that one has a moduli space (or stack) $\sM$ of complex structures on the manifold underlying a simply connected smooth projective variety $M$. What are the constraints on the Zariski closure of the image of the monodromy representation $\pi_1(\sM,[M]) \to \G_M \to G_M(\Q)$? What can one say about the kernel of the monodromy representation, either in specific cases or in general?
\end{question}

We've already seen that for hypersurfaces of dimension congruent to 3 mod 4, the kernel is large (Carlson--Toledo) and the image is not Zariski dense as its intersection with $T_M$ is constrained by the vanishing of the distortion.
\medskip

The next few questions/problems are aimed at getting a better understanding of the relationship between $\pi_1(\sH_{n,d},[M])$ and mapping class groups. In particular, it seems important to get a better understanding the topological reasons for the failure of the (virtual) injectivity and surjectivity of the geometric monodromy homomorphism $\lambda_M$ in Theorem~\ref{thm:hypersurfaces}. One possible explanation for this failure is that we are considering the wrong mapping class group. A hypersurface $M$ in $\P^{n+1}$ can (and should?) be regarded as a pair $(\P^{n+1},M)$. This suggests that we should consider the mapping class group
$$
\G_{(\P^{n+1},M)} := \pi_0 \Diff^+(\P^{n+1},M)
$$
where $\Diff^+(\P^{n+1},M)$ denotes the group of orientation preserving diffeomorphisms of $\P^{n+1}$ that restrict to a diffeomorphism of $M$. The geometric monodromy of the universal family $\sX \subset \sH_{n,d}\times \P^{n+1}$ over $\sH_{n,d}$ is a homomorphism
$$
\lambda_{(\P^{n+1},M)} : \pi_1(\sH_{n,d},[M]) \to \G_{(\P^{n+1},M)}
$$

\begin{question}
How close is $\lambda_{(\P^{n+1},M)}$ to being an isomorphism?
\end{question}

\begin{problem}
Develop tools for understanding $\G_{(\P^{n+1},M)}$. For example, can the results of Sullivan in Section~\ref{sec:sullivan} generalize to this relative setting?
\end{problem}

As in Section~\ref{sec:hypersurfaces}, $\pi : \Mhat \to \P^{n+1}$ is the cyclic covering $y^d+f(x)=0$ of $\P^{n+1}$ branched along $M$, where $M$ is the hypersurface defined by the homogeneous polynomial $f$. The automorphism group of $\pi$ is the group $\bmu_d$ of $d$th roots of unity. Set
$$
\Diff_\pi \Mhat = \{\varphi \in \Diff^+ \Mhat: \varphi \text{ commutes with the } \bmu_d \text{ action}\}
$$
and $\G_{\Mhat,\pi} := \pi_0 \Diff_\pi \Mhat$. There is a central extension
$$
1 \to \bmu_d \to \G_{\Mhat,\pi}  \to \G_{(\P^{n+1},M)}\to 1
$$
and a monodromy homomorphism $\G_{(\P^{n+1},M)} \to \G_\Mhat$. These fit into a commutative diagram
$$
\xymatrix{
0 \ar[r] & \Z \ar[r] \ar[d] & \pi_1(\sU_{n,d},f) \ar[r]\ar[d] & \pi_1(\sH_{n,d},[M]) \ar[d]_{\lambda_{(\P^{n+1},M)}} \ar[dr]^{\lambda_M} \cr
1 \ar[r] & \bmu_d \ar[r] & \G_{\Mhat,\pi} \ar[r] \ar[d] & \G_{(\P^{n+1},M)} \ar[r] & \G_M \cr
&& \G_\Mhat
}
$$
where the composition of the two vertical maps is the geometric monodromy $\lambdahat_M$ of the family $\sXhat$ over $\sU_{n,d}$.

\begin{question}
Is $\pi_1(\sU_{n,d},f)$ mod a finite group commensurable with the product of its images in $\G_M$ and $\G_\Mhat$?
\end{question}

If so and if one believes that Kreck--Su generalizes to higher dimensions, this will imply that $\pi_1(\sU_{n,d},f)$ mod a finite group is commensurable with the product of is images in the cohomology rings of $M$ and $\Mhat$, which would imply that, after quotienting by a finite group, it is linear and residually finite. This leads us to the following question of Carlson and Toledo.

\begin{question}[Carlson--Toledo { \cite[p.~647]{carlson-toledo}}]
Are the groups $\pi_1(\sH_{n,d},[M])$ linear? Are they residually finite?
\end{question}

\subsection{Hodge theory}

Hodge theory is a potentially useful tool for investigating mapping class groups of compact K\"ahler manifolds. One possible medium for applying it is the Hodge theory of relative unipotent completion \cite{hain:malcev}.

There should be an interesting and useful Hodge theory for the relative completion of $\G_M$ when $M$ is compact K\"ahler. As pointed out in Remark~\ref{rem:rel_comp}, the group $G_M$ is a quotient of the relative completion of $\G_M$ with respect to the monodromy homomorphism $\G_M \to \Aut H^\bdot(M;\Q)$.

When $M$ is a simply connected compact K\"ahler manifold, $\pi_\bdot(M,x_o)_\Q$ has a natural MHS which does not depend on $x_o$. This implies that the coordinate ring of $\Aut \pi_\bdot(M,x_o)_\Q$ is a Hopf algebra in the category of MHS. Recall from Section~\ref{sec:adams} that the weight filtration on $\pi_\bdot(M,x_o)$ is related to the Adams filtration by $A^m \pi_\bdot(M) = W_{-m}\pi_\bdot(M)$. This implies that the weights on the Lie algebra of the Zariski closure of $T_M$ in $\Aut \pi_\bdot(M,x_o)_\Q$ are strictly negative.

\begin{conjecture}
If $M$ is a simply connected compact K\"ahler manifold, the coordinate ring and Lie algebra of the group $G_M$ in Theorem~\ref{thm:sullivan_diff} should have natural MHSs such that the homomorphisms
$$
D_M \to G_M  \text{ and } G_M \to \Aut \pi_\bdot(M,x_o)_\Q
$$
are morphisms of MHS. (That is, the induced map on coordinate rings is a morphism of MHS.) When $M$ is algebraic, these should form admissible variations of MHS over the moduli space $\sM$ that parameterizes algebraic structures on the manifold underlying $M$. The weights on the Lie algebra $\u_M$ of the unipotent radical of $M$ should be strictly negative.
\end{conjecture}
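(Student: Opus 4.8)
The plan is to build the Hodge theory of $G_M$ from two ingredients: the canonical mixed Hodge structure on the homotopy Lie algebra $\pi_\bdot(M,x_o)_\Q$, and Sullivan's description (Remark~\ref{rem:rel_comp}) of $G_M$ as a quotient of the completion of $\G_M$ relative to its action on $H^\bdot(M;\Q)$. Since a compact K\"ahler manifold is formal \cite{dgms}, the graded Lie algebra $\pi_\bdot(M,x_o)_\Q$ is assembled from the cohomology ring $H^\bdot(M;\Q)$ by the functorial operations that compute its minimal Lie algebra model --- free Lie algebras, quotient by the ideal generated by the dual of the cup product, and homology of the resulting complex --- each of which is a morphism of Hodge structures. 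Hence $\pi_\bdot(M,x_o)_\Q$ carries a functorial MHS whose weight filtration is the Adams filtration (the identification $W_{-m}=A^m$ of Section~\ref{sec:adams}), so that $\Der\pi_\bdot(M,x_o)_\Q$ and the coordinate ring of $\Aut\pi_\bdot(M,x_o)_\Q$ are naturally (ind-)mixed Hodge structures; and the Pontryagin classes, being polynomials in Chern classes, are Hodge classes.

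\textbf{The weight statement.} I would first establish that $\u_M$ has strictly negative weights. Every homotopy self-equivalence of $M$ preserves the Adams filtration, and $\ho T_M$ --- hence $T_M$ and its unipotent completion $U_M$, by the corollary to Theorem~\ref{thm:sullivan_diff} --- acts trivially on $\Gr^W_\bdot\pi_\bdot(M,x_o)_\Q$, each graded piece being a subquotient of a tensor power of $H_\bdot(M;\Q)$. Therefore the image of $\u_M$ in $\Der\pi_\bdot(M,x_o)_\Q$ lies in $W_{-1}$, which as a sub-MHS has weights $\le -1$, and its kernel is $D_M$. A direct computation with the Wang-sequence definition of $\deltatilde$ in Section~\ref{sec:distortion} should identify $D_M$, as a central sub-MHS of $\u_M$, with $\bigoplus_{4k\le\dim_\R M} H^{4k-1}(M;\Q)(2k)$ --- the Tate twist by $2k$ being forced by the weight $4k$ of $p_k$ --- so that $D_M$ is pure of weight $-1$, matching the fact that the target $\Hom(H_3(M;\Q),\Sym^2 H_2(M;\Q)/\im\Delta)$ of the Johnson homomorphism $\tau_M$ of Theorem~\ref{thm:main} also sits in weight $-1$. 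Compatibility of $D_M\to G_M$ with MHS then reduces to checking that the $\G_M$-action on $D_M$ of Proposition~\ref{prop:distortion} is induced by morphisms of MHS, which should follow from the naturality of Pontryagin classes under the morphisms of Wang sequences used to define $\deltatilde$.

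\textbf{The Hodge filtration and the variation.} To pin down the Hodge filtration one needs the complex structure, so here the moduli space enters. Over a suitable level cover $\sM'$ of $\sM$ the universal family $\X\to\sM'$ furnishes the polarized variation of Hodge structure on the cohomology of its fibres and, by the Hodge theory of the rational homotopy type of a smooth projective family, an admissible graded-polarizable variation of MHS with fibre $\pi_\bdot(M,x_o)_\Q$ at $[M]$. Applying the functorial constructions above to these variations yields admissible variations of MHS whose fibres are the Lie algebra of $\ho E_{M_\rat}^\pp$, the group $D_M$, and $\u_M$, and hence $\mathcal O(G_M)$; the reductive quotient carries the weight-$0$ polarized Hodge structure on $\g$ produced in Section~\ref{sec:coho_autos}, and the homomorphism $G_M\to\Aut\pi_\bdot(M,x_o)_\Q$ is then a morphism of these variations by construction. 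Admissibility propagates through the construction because duals, $\Hom$, $\Sym$, kernels, cokernels, and the extensions whose classes are given by cup and Whitehead products all preserve it.

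\textbf{The main obstacle.} The delicate point is the extension $0\to D_M\to G_M\to\ho E_{M_\rat}^\pp\to 1$: one must show that its class is a morphism of MHS and that, over $\sM'$, it varies admissibly. Unlike the classical surface case, this cannot be obtained by pulling it back along the monodromy $\pi_1(\sM',[M])\to\G_M$, since by Theorem~\ref{thm:hypersurfaces} that homomorphism may have infinite-index image and a non-abelian free kernel, so the relative completion of $\pi_1(\sM')$ need not surject onto $G_M$. The extension class has to be produced intrinsically --- presumably by reading $\deltatilde$ off the family of mapping tori as a kind of higher normal function and verifying directly the analytic growth conditions that admissibility demands --- and one must then check independence of the auxiliary choices (level structure, base point). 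This step, together with the expected comparison with relative completions of $\pi_1(\sM)$ raised in Section~\ref{sec:future}, is where the real difficulty lies.
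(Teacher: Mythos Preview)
The statement you are addressing is a \emph{conjecture}, placed by the paper in its section of open problems; the paper offers no proof, only motivation. So there is no proof to compare your proposal against, and your task was in some sense ill-posed from the start. What you have written is not a proof either but a program, and you are candid about this: the repeated hedges (``should identify'', ``should follow'', ``presumably'') and the explicit flagging of a ``main obstacle'' make clear that you do not claim to have closed the argument.

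Two substantive points are worth naming. First, the step in which you assert that $D_M$ is pure of weight $-1$ is itself posed by the paper as an open question (Question~\ref{quest:wt_distortion}), immediately after the conjecture; you cannot treat it as an intermediate lemma without supplying an argument, and the indeterminacy subgroup $I$ in the definition of $D_M$ makes the Hodge-theoretic identification less automatic than your sketch suggests. Second, and more fundamentally, your own ``main obstacle'' is the heart of the matter: knowing that $D_M$ and $\ho E_{M_\rat}^\pp$ separately carry MHS does not by itself put a MHS on the extension $G_M$, since extensions of MHS form a torsor under a nontrivial $\Ext^1$ group. Producing the correct extension class intrinsically --- without pulling back from a moduli space whose monodromy, as you rightly note via Theorem~\ref{thm:hypersurfaces}, need not hit all of $\G_M$ --- is exactly what the conjecture is asking for, and nothing in your outline resolves it. Your program is a reasonable reading of why the author believes the conjecture, but it is not a proof, and the paper does not pretend to have one.
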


The local systems $R^j \pi_\ast \Q$ over $\sM$ associated to the universal family $\pi:\sX \to \sM$ are polarized variations of Hodge structure (PVHS) over $\sM$. By the results of \cite{hain:malcev}, the completion of $\pi_1(\sM,[M])$ with respect to the monodromy representation of the variation
$$
\bH := \bigoplus_j R^j \pi_\ast \Q
$$
has a canonical MHS. Denote this relative completion by $\cG_M$. The local system over $\sM$ whose fiber over the moduli point of $M$ is the coordinate rings of the $\cG_M$ is an admissible variation of MHS.

\begin{conjecture}
The homomorphism $\cG_M \to G_M$ induced by the monodromy representation $\pi_1(\sM,[M]) \to \G_M$ is a morphism of MHS.
\end{conjecture}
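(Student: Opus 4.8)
The homomorphism $\Theta : \cG_M \to G_M$ is the one induced by the composite $\pi_1(\sM,[M]) \xrightarrow{\lambda_M} \G_M \to G_M(\Q)$ through the universal property of relative completion. The plan is to deduce that $\Theta$ respects mixed Hodge structures from Conjecture~A above, using the functoriality of Hain's Hodge theory of relative completion \cite{hain:malcev}. The crucial feature of \cite{hain:malcev} is that it does more than equip $\cG_M$ with an MHS: it identifies the category of admissible variations of MHS over $\sM$ that are \emph{relative to} $\bH$ --- those whose monodromy, after passage to the reductive quotient, factors through the monodromy group of $\bH$ --- with the category of representations of $\cG_M$ in mixed Hodge structures, the fibre functor at $[M]$ corresponding to the underlying-MHS functor.

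Granting Conjecture~A, here is how the argument would go. The local system over $\sM$ whose fibre at a moduli point $[M']$ is the coordinate ring $\cO(G_{M'})$ underlies an admissible variation of MHS (this is part of Conjecture~A, its fibre at $[M]$ being $\cO(G_M)$ with the intrinsic MHS), and its monodromy is the conjugation action of $\pi_1(\sM,[M])$ through $\lambda_M : \pi_1(\sM,[M]) \to \G_M \to \Aut(G_M)$. This monodromy is relative to $\bH$: the conjugation action of $\G_M$ on $G_M$ factors through $\G_M \to G_M(\Q)$ (so it is by inner automorphisms), whence $T_M = \ker\{\G_M \to \Aut H^\bdot(M;\Q)\}$ acts by inner automorphisms that are trivial in the reductive quotient, and therefore the semisimplification of the monodromy factors through the image of $\pi_1(\sM,[M])$ in $\Aut H^\bdot(M;\Q)$, which is the monodromy group of $\bH$. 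Applying the equivalence of \cite{hain:malcev}, the $\cG_M$-module $\cO(G_M)$ (acting through $\Theta$) is a module in the category of MHS, with its intrinsic MHS; equivalently, the derivation action $\operatorname{Lie}\cG_M \to \Der\cO(G_M)$ is a morphism of MHS. This action factors as $\operatorname{Lie}\cG_M \xrightarrow{\operatorname{Lie}\Theta} \operatorname{Lie} G_M \to \Der\cO(G_M)$, and the second arrow is a morphism of MHS with kernel the centre $\mathfrak z(\operatorname{Lie} G_M)$ (because $G_M$ is a group in the category of MHS, again by Conjecture~A). Strictness of morphisms of MHS then forces $\operatorname{Lie}\Theta$ to be a morphism of MHS modulo $\mathfrak z(\operatorname{Lie} G_M)$. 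On reductive quotients $\operatorname{Lie}\Theta$ induces the map $\operatorname{Lie}\cS_M \to \operatorname{Lie} S_M$ (with $S_M$ the reductive quotient of $G_M$), which is the morphism of weight-zero Hodge structures obtained by restricting $\bH$ to the fibre $M$; and the residual map into $\mathfrak z(\operatorname{Lie} G_M)$ lands --- apart from a weight-zero piece inside the reductive quotient --- in the centre $\mathfrak z(\u_M)$ of the Lie algebra of the unipotent radical, which contains the centrally embedded distortion group $D_M \subset \bigoplus_j H^{4j-1}(M;\Q)$, where the compatibility is supplied by the clause of Conjecture~A that $D_M \to G_M$ is a morphism of MHS. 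Assembling these pieces would give that $\Theta$, equivalently $\Theta^\ast : \cO(G_M) \to \cO(\cG_M)$, is a morphism of MHS.

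The substantive obstacle is Conjecture~A itself, and within it the assertion that the coordinate ring of $G_M$ forms an admissible variation of MHS over $\sM$. The rational-homotopy-theoretic quotient $\ho E_{M_\rat}^\pp$ of $G_M$ is controlled by the mixed Hodge theory of the (formal, by \cite{dgms}) minimal model of the K\"ahler fibres and varies admissibly because the universal family $\sX \to \sM$ is algebraic; the difficulty lies in the central extension $0 \to D_M \to G_M \to \ho E_{M_\rat}^\pp \to 1$, and underlying it, in finding a Hodge-theoretic reading of the distortion homomorphism $\delta : T_M \to D_M$. My proposal would be to interpret $\deltatilde_k(F) \in H^{4k-1}(M)$ as the secondary characteristic class --- a Cheeger--Simons differential character, or a Chern--Simons form --- measuring the discrepancy of the two canonical lifts of $p_k$ to the mapping torus, and to pass to its class in Deligne cohomology $H^{4k}_\cD(M;\Q(2k))$; as Deligne cohomology is built from the Hodge filtration and the rational structure, this would endow the distortion with the required compatibility, and algebraicity of $\sX \to \sM$ would then make the extension an admissible variation, at which point the argument above applies. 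Given a satisfactory form of Conjecture~A I expect the residual bookkeeping on $\mathfrak z(\operatorname{Lie} G_M)$ to be routine, so the hard part is entirely Conjecture~A. A more hands-on alternative would match the two Hodge filtrations directly --- Chen's iterated integrals on a compactification of $\sM$ \cite{chen} against Morgan's filtration on the minimal model of a single fibre, extended by the distortion --- but I expect this to be harder, precisely because it forces one to compute $F^\bullet$ explicitly, which the functorial route avoids.
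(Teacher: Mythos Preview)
The statement you are addressing is a \emph{conjecture}, placed in the paper's section on open problems and future directions; the paper offers no proof. So there is nothing in the paper to compare your proposal against, and your write-up is properly read as a strategy rather than a proof --- which you yourself acknowledge by making everything conditional on ``Conjecture~A'' (the preceding conjecture that $\cO(G_M)$ carries a natural MHS varying admissibly over $\sM$) and by identifying that conjecture as ``the hard part''.

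As a strategy the Tannakian route via \cite{hain:malcev} is the right reflex, but even granting Conjecture~A your argument has a genuine gap at the step where you pass from ``the conjugation action of $\cG_M$ on $\cO(G_M)$ is a morphism of MHS'' to ``$\Theta:\cG_M\to G_M$ is a morphism of MHS''. The conjugation map $G_M \to \Aut\cO(G_M)$ has kernel the centre of $G_M$, and strictness does not let you lift an MHS morphism along a non-injective map; your patch via $D_M$ is only heuristic, since $D_M$ need not exhaust $\mathfrak{z}(\u_M)$ and you do not control the central torus in the reductive quotient. One might hope to use the left regular representation of $G_M$ on $\cO(G_M)$ instead, which is faithful, but the monodromy of the local system $[M']\mapsto \cO(G_{M'})$ over $\sM$ is genuinely by group automorphisms (hence, after composing with $\G_M\to G_M(\Q)$, by conjugation), not by translations, so that route is not directly available. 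In short, your outline gives a plausible reason why the two conjectures should stand or fall together, but it is not a proof even modulo Conjecture~A; and the paper, for its part, does not claim one.
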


The MHS on the distortion group should have weight $-1$ as it is a subquotient of the weight $-1$ Hodge structure
$$
\bigoplus_j \Hom(\Q\, p_j(M),H^{4j-1}(M)) \cong \bigoplus_j H^{4j-1}(M)(2j).
$$

\begin{question}
\label{quest:wt_distortion}
Assuming that there is a natural MHS on $\u_M$, is the induced MHS on $D_M$ pure of weight $-1$?
\end{question}

If this is the case, it will imply that $D_M \to H_1(T_M;\Q)$ is injective when $M$ is compact K\"ahler as it implies that that $D_M\cap [\u_M,\u_M]$ vanishes.
\begin{question}
What is $\Gr^W_{-1} H_1(\u_M)$? And what are the possible weights on $H_1(\u_M)$?
\end{question}

In the case of curves, $H_1(\u_M)$ is pure of weight $-1$ when $g\ge 3$, \cite{hain:torelli}; is pure of weight $-2$ when $g=2$, \cite{watanabe}; and has arbitrarily negative weights when $g=1$, \cite[\S12]{hain:modular}. These questions are relevant as $\Gr^W_1H^1(\u_M;\Q)$ controls the normal functions defined over $\sM$: Suppose that $\V$ is a PVHS over $\sM$ of weight $-1$ with fiber $V$ over $[M]$. Denote the corresponding bundle of intermediate jacobians over $\sM$ by $\cJ(\V)$ and the space of normal function sections of it by $H^0(\sM,\cJ(\V))$. There are canonical isomorphisms
$$
H^0(\sM,\cJ(\V)) \cong \G H^1(\sM,\V) \cong \G\Hom(\Gr^W_{-1} H_1(\u_M,V))^{\pi_1(\sM,[M])},
$$
where $\G$ denotes $\Hom_\MHS(\Q(0),\blank)$, which picks out the Hodge classes of a MHS of weight 0.


\begin{thebibliography}{99}

\bibitem{adams}
F.~Adams:
{\em On the cobar construction}, in Colloque de topologie alg\'ebrique, Louvain, 1956, pp.~81–87. Georges Thone, Li\`ege; Masson \& Cie, Paris, 1957.

\bibitem{beauville}
A.~Beauville:
{\em Le groupe de monodromie des familles universelles d'hypersurfaces et d'intersections compl\`etes}, in Complex analysis and algebraic geometry (G\"ottingen, 1985), 8--18, Lecture Notes in Math., 1194, Springer, 1986.

\bibitem{ccm}
J.~Carlson, C.~H.~Clemens, J.~Morgan:
{\em On the mixed Hodge structure associated to $\pi_3$ of a simply connected complex projective manifold}, Ann.\ Sci.\ \'Ecole Norm.\ Sup.\ (4) 14 (1981), 323--338.

\bibitem{carlson-toledo}
J.~Carlson, D.~Toledo:
{\em Discriminant complements and kernels of monodromy representations}, Duke Math.\ J. 97 (1999), 621--648.

\bibitem{chen}
K.-T.\ Chen:
{Iterated path integrals}, Bull.\ Amer.\ Math.\ Soc.\ 83 (1977), 831--879.

\bibitem{deligne-hodge2}
P.~Deligne:
{\em Th\'eorie de Hodge II}, Inst.\ Hautes \'Etudes Sci.\ Publ.\ Math.\ No.~40 (1971), 5--57.

\bibitem{dgms}
P.~Deligne, J.~Morgan, P.~Griffiths, D.~Sullivan:
{\em Real homotopy theory of Kähler manifolds}, Invent.\ Math.\ 29 (1975), 245--274.

\bibitem{ewing}
J.~Ewing, S.~Moolgavkar:
{\em Euler characteristics of complete intersections}, Proc.\ Amer.\ Math.\ Soc.\ 56 (1976), 390--391.

\bibitem{hain}
R.~Hain:
{\em Iterated integrals and homotopy periods}, Mem.\ Amer.\ Math.\ Soc.\ 47 (1984), no.~291.

\bibitem{hain:rel_comp}
R.~Hain:
{\em Completions of mapping class groups and the cycle $C-C^-$}. Mapping class groups and moduli spaces of Riemann surfaces (G\"ottingen, 1991/Seattle, WA, 1991), 75--105, Contemp.\ Math., 150, Amer.\ Math.\ Soc., 1993.

\bibitem{hain:msri}
R.~Hain:
{\em Torelli groups and geometry of moduli spaces of curves}, in Current topics in complex algebraic geometry (Berkeley, CA, 1992/93), 97--143, Math.\ Sci.\ Res.\ Inst.\ Publ., 28, Cambridge Univ.\ Press, 1995.

\bibitem{hain:torelli}
R.~Hain:
{\em Infinitesimal presentations of the Torelli groups}, J.\ Amer.\ Math.\ Soc.\ 10 (1997), 597--651.

\bibitem{hain:malcev}
R.~Hain:
{\it Hodge-de~Rham theory of relative Malcev completion}, Ann.\ Sci.\ \'Ecole Norm.\ Sup., t.~31 (1998), 47--92.

\bibitem{hain:modular}
R.~Hain:
{\em The Hodge–de Rham theory of modular groups}, Recent advances in Hodge theory, 422--514, London Math.\ Soc.\ Lecture Note Ser., 427, Cambridge Univ. Press, 2016.

\bibitem{johnson:homom}
D.~Johnson:
{\em An abelian quotient of the mapping class group $\mathcal{I}_g$}, Math.\ Ann.\ 249 (1980), 225--242.

\bibitem{johnson:survey}
D.~Johnson:
{\em A survey of the Torelli group}, Low-dimensional topology (San Francisco, Calif., 1981), 165--179, Contemp.\ Math., 20, Amer.\ Math.\ Soc., 1983

\bibitem{johnson:h1}
D.~Johnson:
{\em The structure of the Torelli group. III: The abelianization of $\mathcal I$}, Topology 24 (1985), 127--144.

\bibitem{kreck-su}
M.~Kreck, Y.~Su:
{\em Finiteness and infiniteness results for Torelli groups of (hyper-) K\"ahler manifolds}, Math.\ Ann.\ 381 (2021), 841--852.

\bibitem{kreck-su:2}
M.~Kreck, Y.~Su:
{\em Mapping class group of manifolds which look like 3-dimensional complete intersections}, \comment{arXiv:2009.08054}

\bibitem{lamotke}
K.~Lamotke:
{\em The topology of complex projective varieties after S.~Lefschetz}, Topology 20 (1981), 15--51.

\bibitem{looijenga:prym}
E.~Looijenga:
{\em Prym representations of mapping class groups}, Geom.\ Dedicata 64 (1997), 69--83.

\bibitem{looijenga}
E.~Looijenga:
{\em Teichmüller spaces and Torelli theorems for hyperk\"hler manifolds}, Math.\ Z.\ 298 (2021), 261--279.

\bibitem{milnor-moore}
J.~Milnor, J.~Moore:
{\em On the structure of Hopf algebras}, Ann.\ of Math.\ (2) 81 (1965), 211--264.

\bibitem{milnor-stasheff}
J.~Milnor, J.~Stasheff:
{\em Characteristic classes}. Annals of Mathematics Studies, No.~76. Princeton University Press, 1974.

\bibitem{sullivan}
D.~Sullivan:
{\em Infinitesimal computations in topology}, Publ.\ Math.\ IHES, No. 47 (1977), 269--331.

\bibitem{verbitsky}
M.~Verbitsky:
{\em Mapping class group and a global Torelli theorem for hyperk\"ahler manifolds}, 
Appendix A by Eyal Markman. Duke Math.\ J.\ 162 (2013), 2929--2986.

\bibitem{verbitsky:erratum}
M.~Verbitsky:
{\em Errata for ``Mapping class group and a global Torelli theorem for hyperk\"hler manifolds'' by Misha Verbitsky}, Duke Math.\ J.\ 169 (2020), 1037--1038.

\bibitem{watanabe}
T.~Watanabe:
{\em On the completion of the mapping class group of genus two}, J.\ Algebra 501 (2018), 303--327.

\bibitem{whitehead}
G.~Whitehead:
{\em Elements of homotopy theory}.
Graduate Texts in Mathematics, 61.\ Springer-Verlag, 1978.

\end{thebibliography}
\end{document}